\renewcommand{\epsilon}{\varepsilon} 
\newtheorem{theorem}{Theorem}[section]
\newtheorem{corollary}[theorem]{Corollary}
\newtheorem{lemma}[theorem]{Lemma}
\newtheorem{proposition}[theorem]{Proposition}
\theoremstyle{definition}
\newtheorem{defn}[theorem]{Definition}
\newtheorem{remark}[theorem]{Remark}
\pgfplotsset{compat=1.10}
\tikzset{
  partial ellipse/.style args={#1:#2:#3}{
    insert path={+ (#1:#3) arc (#1:#2:#3)}
  }
}
\tikzset{
  partial ellipsecake/.style args={#1:#2:#3:#4}{
    insert path={+ (#1:#3) arc (#1:#2:#3 and #4) -- (0,0)  -- (#3,0)}
  }
}
\tikzset{
  use path for main/.code={%
    \tikz@addmode{%
      \expandafter\pgfsyssoftpath@setcurrentpath\csname tikz@intersect@path@name@#1\endcsname
    }%
  },
  use path for actions/.code={%
    \expandafter\def\expandafter\tikz@preactions\expandafter{\tikz@preactions\expandafter\let\expandafter\tikz@actions@path\csname tikz@intersect@path@name@#1\endcsname}%
  },
  use path/.style={%
    use path for main=#1,
    use path for actions=#1,
  }
}
\DeclareMathOperator{\MP}{MP}
\DeclareMathOperator{\Wh}{Wh}
\DeclareMathOperator{\Gr}{Gr}
\DeclareMathOperator{\Diff}{Diff}
\DeclareMathOperator{\Map}{Map}
\DeclareMathOperator{\im}{im}
\DeclareMathOperator{\rt}{rt}
\newcommand{\Zp}[1]{\Z\!\!\>/\!\!\>#1}
\newcommand{\se}{\mbox{\kern 0.5pt \textrm{$\varoast$}\kern 0.5pt}}
\newcommand{\Ps}{\mathcal{P}}
\newcommand{\opsi}{\overline{\psi}}
\newcommand{\oeta}{\overline{\eta}}
\newcommand{\M}{\mathcal{M}}
\newcommand{\U}{\mathcal{U}}
\newcommand{\UN}{\mathcal{UN}}
\newcommand{\UNF}{\mathcal{UNF}}
\renewcommand{\H}{\mathcal{H}}
\newcommand{\cC}{\mathcal{C}}
\newcommand{\cR}{\mathcal{R}}
\newcommand{\hF}{\mathcal{HF}}
\newcommand{\nH}{(\mathcal{H}_{\R^k})}
\newcommand{\nT}{(\mathcal{T}_{\R^k})}
\newcommand{\nHF}{(\hF_{\R^k})}
\newcommand{\oQ}{\overline{Q} \mkern -12.7mu \phantom{\mathcal{U}}}
\newcommand{\tQ}{\tilde Q}
\newcommand{\oq}{\overline{q}}
\newcommand{\wH}{\widetilde{\mathcal{H}}}
\newcommand{\oW}{\overline{\mathcal{W}} \mkern -12.7mu \phantom{\mathcal{U}}}
\newcommand{\oU}{\overline{\mathcal{U}} \mkern -12.7mu \phantom{\mathcal{U}}}
\newcommand{\oC}{\mathcal{C}(X)}
\newcommand{\hU}{\tilde{\mathcal{U}}}
\newcommand{\tU}{\tilde{\mathcal{U}}}
\newcommand{\tW}{\tilde{\mathcal{W}}}
\renewcommand{\th}{\tilde h}
\DeclareMathOperator{\un}{un}
\newcommand{\osig}{\overline{\sigma}}
\newcommand{\usig}{\underline{\sigma}}
\DeclareMathOperator*{\colim}{colim}
\newcommand{\R}{\mathbb{R}}
\newcommand{\F}{\mathcal{F}}
\newcommand{\Ra}{\Rightarrow}
\newcommand{\C}{\mathbb{C}}
\newcommand{\Ss}{\mathbb{S}}
\newcommand{\oN}{\tilde{N}}
\newcommand{\Z}{\mathbb{Z}}
\newcommand{\T}{\mathcal{T}}
\newcommand{\pd}[2]{\frac{\partial #1}{\partial #2}}
\newcommand{\eh}{\tfrac{1}{2}}
\DeclareMathOperator{\inte}{Int}
\DeclareMathOperator{\hw}{hw}
\DeclareMathOperator{\Adj}{Adj}
\DeclareMathOperator{\id}{id}
\DeclarePairedDelimiter\absv{\lvert}{\rvert}
\DeclarePairedDelimiter\norm{\lVert}{\rVert}
\DeclarePairedDelimiter\inner{\langle}{\rangle}
\DeclarePairedDelimiter\pare{(}{)}
\newcommand{\tdd}[2]{\tfrac{\partial #1}{\partial #2}}
\numberwithin{equation}{section}
\begin{document}

\title{Generating Functions in $\R^{2n}$ and the Hatcher--Waldhausen Map}

\author[T. Kragh]{Thomas Kragh}
\address{
  Department of Mathematics, Uppsala University, Sweden \\
  ORCID: 0000-0003-2618-5712
}
\email{thomas.kragh@math.uu.se}

\begin{abstract}
  In this paper, we construct a generating function quadratic at infinity for any exact Lagrangian in $\R^{2n}$ that equals $\R^n$ outside a compact set. Such a Lagrangian may be viewed as a Lagrangian filling of the standard Legendrian unknot $S^{n-1}$ in $D^{2n}$. Generating functions of the type we construct are related to the space $\M_\infty$ considered by Eliashberg and Gromov. We also show that $\M_\infty$ is the homotopy fiber of the so-called Hatcher--Waldhausen map. This further relates the study of exact Lagrangians (and Legendrians) to algebraic K-theory of spaces. Using this and B\"okstedt's result that the Hatcher--Waldhausen map is a rational homotopy equivalence, we prove that the stable Lagrangian Gauss map (relative to the boundary) of the Lagrangian is null-homotopic.
\end{abstract}

\date{\today}

\keywords{Lagrangian embeddings}

\maketitle

\tableofcontents

\section{Introduction}

In this paper, we describe how to construct a generating function (sometimes called a generating family) quadratic at infinity for any exact Lagrangian $L_\infty \subset \C^n=T^*\R^n$ that equals the zero section $\R^n$ outside a compact set. By scaling, we can assume that the nontrivial part of $L_\infty$ is completely contained in $T^*D^n$ and thus we may consider a compact $L\subset T^*D^n$ with boundary $S^{n-1}$. Alternatively, one can view this as an exact Lagrangian filling of the standard Legendrian unknot $S^{n-1} \subset D^{2n}$.

By adding a point at infinity to $L_\infty$, we also consider the exact Lagrangian submanifold $L^+$ in $T^*S^n$, which is essentially the same as adding a standard handle to the Lagrangian filling. It follows from the papers~\cite{Abou1} and~\cite{MySympfib} that $L^+$ is a homotopy sphere. We conclude that $L$ is in fact contractible, and this will be our starting point.

The main idea is to construct a Hamiltonian isotopy which takes the zero section $\R^n \subset \R^{2n}$ to a Lagrangian $K$ containing $L$. We then appeal to Chaperon's broken geodesic approach from~\cite{MR765426} to conclude that $K$ has a generating function quadratic at infinity (extended in~\cite{ref1} to cotangent bundles). We then use a cut-and-paste argument to construct one for $L$. This last step requires some control on the primitive of the Liouville form on $K$. Indeed, the cutting part will use fiberwise regular values of the generating function. The construction of the isotopy is carried out in Section~\ref{sec:an-infin-dimens}, and the cut-and-paste argument is carried out in Section~\ref{sec:cutting-paste-gener}.

Before formulating the precise theorem, we need to define which type of generating function we are considering. Let $N$ be a compact smooth manifold of dimension $n$, possibly with boundary. We will say that a Lagrangian $L \subset T^*N$ has \emph{compact support} if $L$ agrees with the zero section outside a compact set in $T^*N$ which projects to the interior of $N$. Similarly, we will say that a Hamiltonian is compactly supported if it has support contained in a compact set in $T^*N$ which projects to the interior of $N$.

Let $\pi_N: N\times \R^{2k} \to N$ denote the projection. Let $F:N \times \R^{2k} \to \R$ be a smooth function, and for each $x\in N$, let $F_x$ denote $F$ restricted to the fiber $\pi_N^{-1}(x)$. Let $\Sigma_F \subset N\times \R^{2k}$ be the zero set of the fiberwise differential $d_fF=\bigcup_{x\in N} d(F_x)$ of $F$. We have a canonical map
\begin{align*}
  d_h : \Sigma_F \to T^*N.
\end{align*}
Indeed, the projection to the base $N$ is given by $\pi_N$ above, and taking any tangent vector $v\in T_{\pi_N(z)}N$ we can lift it to a vector $w \in T_z(N\times \R^{2k})$ and define $d_h(z)(v) = dF(w)$. This is independent of the lift since the fiberwise differential $d_fF$ is $0$ at $z\in \Sigma_F$. By a \emph{generating function} for an immersed Lagrangian $L \to T^*N$ we mean a function as above such that
\begin{itemize}
\item[(G1)] the set $\Sigma_F$ is cut out transversely by the equation $d_fF=0$, and
\item[(G2)] the map $d_h : \Sigma_F \to T^*N$ factors through a diffeomorphism to $L$.
\end{itemize}
Note that the image of $d_h$ is always an exact Lagrangian immersion. Indeed, $F$ restricted to ${\Sigma_F}$ is a primitive for the pullback to $\Sigma_F$ of the canonical form on $T^*N$. Let $Q_k : \R^{2k} \to \R$ be the quadratic form
\begin{align} \label{eq:1Qk}
  Q_k(x,y) = -\norm{x}^2+\norm{y}^2
\end{align}
for $x,y \in \R^k$. We will refer to this as the \emph{standard quadratic} form. By abuse of notation we will also let $Q_k$ denote the composition
\begin{align*}
  N\times \R^{2k} \to \R^{2k} \xrightarrow{Q_k} \R.
\end{align*}
In this paper we will say that a generating function is \emph{quadratic at infinity} if
\begin{itemize}
\item[(G3)] the support of $F-Q_k$ is compact.
\end{itemize}
Thus, we allow the support to be nontrivial in fibers over boundary points, even when generating the zero section. The first result of this paper is the following.
\begin{theorem}\label{thm:1}
  Any exact Lagrangian $L\subset T^*D^n$ that agrees with the zero section over a neighborhood of the boundary $\partial D^n=S^{n-1}$ has a generating function quadratic at infinity.
\end{theorem}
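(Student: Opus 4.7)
The plan is to execute the three-step strategy outlined in the introduction, the crucial input being that $L$ is contractible (obtained by passing to $L^+ \subset T^*S^n$ and invoking \cite{Abou1} together with \cite{MySympfib}).

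First I would extend $L$ to a global exact Lagrangian $\tilde L \subset \R^{2n}$ by gluing in the zero section outside $D^n$; this is a smooth extension because $L$ already agrees with the zero section on a neighborhood of $\partial D^n$. The real work is then to construct a compactly supported Hamiltonian isotopy $\{\phi_t\}$ of $\R^{2n}$ such that the Lagrangian $K := \phi_1(\R^n)$ contains $\tilde L$. One cannot hope that $\tilde L$ itself is Hamiltonian isotopic to the zero section — that would essentially be the nearby Lagrangian conjecture in this setting — so $K$ must be genuinely larger than $\tilde L$. The contractibility of $L$ is precisely the geometric input that lets such a $K$ be built: choosing a contraction of $L$ produces a smoothly parametrized family which can be promoted, via an infinite-dimensional construction on a space of paths in $\tilde L$, to an ambient exact Lagrangian isotopy of the zero section; integrating the associated symplectic vector field then gives the required Hamiltonian isotopy.

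Once $K$ is realized as the image of $\R^n$ under a compactly supported Hamiltonian isotopy, Chaperon's broken geodesic method, in the cotangent-bundle form of \cite{ref1}, hands over a generating family $F_K : \R^n \times \R^{2k} \to \R$ for $K$ satisfying G1–G3 with respect to the standard form $Q$. It remains to cut $F_K$ down to a generating family for $\tilde L$ (and hence, by restriction to $D^n$, for $L$). Because $K$ is exact, the restriction of $F_K$ to $\Sigma_{F_K}$ transports, via $d_h$, to a primitive $\os$ of the Liouville form on $K$. Since $L$ is a compact subset of $K$ meeting the zero section only in a boundary collar, one can select a fiber-wise regular value $c$ of $\os$ whose level set separates the component of $\Sigma_{F_K}$ corresponding to $\tilde L$ from the remainder of $\Sigma_{F_K}$. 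I would then modify $F_K$ on a fiber-wise neighborhood of $\os^{-1}(c)$ by interpolating with $Q$ through bump functions in the $\R^{2k}$ coordinates, killing the unwanted critical components of $d_fF$ while preserving both transversality G1 and the quadratic-at-infinity condition G3; the resulting function restricted to $D^n \times \R^{2k}$ is the desired generating family for $L$.

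The principal obstacle is the Hamiltonian isotopy step: converting the topological statement that $L$ is contractible into a symplectic statement about an ambient isotopy whose final image contains the prescribed $\tilde L$ is where all the novelty sits, and this is why a detour through an infinite-dimensional construction seems unavoidable. The cut-and-paste step, while delicate in the fiber-wise transversality estimates, is essentially a controlled surgery once a suitable regular value of $\os$ has been fixed.
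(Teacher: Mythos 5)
Your proposal reproduces the three-step skeleton that the introduction advertises (extend to $\Ll$, isotope the zero-section to a larger $K \supset \Ll$, apply Chaperon and then cut), but both load-bearing steps are left as black boxes or described in a way that does not actually work, so this cannot yet count as a proof.

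For the Hamiltonian isotopy step, you invoke ``an infinite-dimensional construction on a space of paths'' that promotes a contraction of $L$ to an ambient exact Lagrangian isotopy. No such promotion exists in general: a topological contraction of $L$ does not give a Lagrangian (let alone Hamiltonian) isotopy, and this is exactly where an argument is needed. The paper's actual use of contractibility is different and much more rigid: contractibility plus $n\neq 4$ gives a \emph{diffeomorphism} $\Ll \cong \R^n$, which supplies global flat Darboux coordinates $(q,p)$ on a Weinstein neighborhood $D_\epsilon^*\Ll$. The isotopy of $\R^n$ is then built in two explicit stages: first a radial Hamiltonian rotation (Lemmas~\ref{lem:Action:2}--\ref{lem:Action:3}) moves the zero-section so that inside $T^*D^n$ it becomes the conormal disc at $x_0$ together with a graphical Lagrangian; then one undoes this rotation \emph{inside the Weinstein neighborhood} using a Hamiltonian of the same radial type but written in $(q,p)$ coordinates (Lemma~\ref{lem:Action:5}). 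The fiber at $x_0$, now in $q$-coordinates the fiber $q=0$, is rotated back to the zero-section of $T^*\Ll$, i.e.\ to $\Ll$ itself, with an extra compactly supported tail $K_1$ generated by the bump-off. Proposition~\ref{prop:Action:1} then arranges the primitive so that its values on $L$ and on $K_1$ are separated by a constant. None of this is a path-space construction, and your version would need to explain concretely where the isotopy comes from.

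For the cut-and-paste step your description also has a gap. You propose to interpolate $F_K$ with $Q$ near a level set of the primitive $\os$, but $\os$ lives only on $\Sigma_{F_K}$, not on $\R^n\times\R^{2k}$, and interpolating with $Q$ across an arbitrary fiber-wise level set of $F_K$ will generically create new fiber-critical points and destroy G1. What the paper does instead is to pick constants $C' < C$ that are fiber-wise regular values of $F_{\mid D^n\times\R^{2k}}$ and that separate the two sets of fiber-critical values (the one coming from $L$ and the one from $K_1$). The ``missing piece'' $M = F_\mid^{-1}([C',C])$ is a fiber bundle over $D^n$, and since $D^n$ is contractible one can choose a horizontal distribution that is parallel to the nearby level sets of $F_\mid$ and trivial at infinity. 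Parallel transport trivializes $M$, and the function is then filled in by transporting the fiber $F_{\mid\{x_0\}\times\R^{2k}}$, which over $x_0$ has no problematic critical points by Proposition~\ref{prop:Action:1}. This is not an interpolation with $Q$; it is a trivialization argument that again uses contractibility of the base. You would need to replace your interpolation step with something of this kind for the cut-and-paste to go through.

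Finally, the case $n=4$ (where one cannot invoke the $h$-cobordism theorem to get $\Ll\cong\R^4$) is handled separately in the paper by crossing with $\R$, and your proposal does not address it.
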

This type of generating function quadratic at infinity is one of the most restrictive types of generating functions. However, in the case where $N$ has boundary, it is not the most restrictive type. In fact, we will use the terminology that a function $F: N\times \R^{2k} \to \R$ is quadratic at \emph{both} infinities if the support of $F-Q_k$ is compact and projects to the interior of $N$. Given a compactly supported exact Lagrangian $L\subset T^*N$, the difference between these two definitions leads us to consider the space $\M_\infty=\colim_{k\to \infty} \M_k$, where $\M_k$ is essentially the space of functions on $\R^{2k}$ equal to $Q_k$ at infinity and with only one critical point which has value 0 and is non-degenerate (this was defined by Eliashberg and Gromov in \cite{MR1732407}). This space essentially consists of the possible fiber functions generating the zero section, and any generating function quadratic at infinity generating the zero section close to $\partial N$ defines a map
\begin{align} \label{eq:GenFun:1}
  \partial N \to \M_\infty.
\end{align}
This map is given by taking the adjoint of the generating function (see Section~\ref{sec:lagrangian-gauss-map}).

\begin{remark}
  We will use the following only as motivation. If the map in Equation~(\ref{eq:GenFun:1}) extends to a map $N\to \M_\infty$ then one may modify the original generating function to be quadratic at both infinities by adding, fiberwise, the loop inverse\footnote{Theorem~\ref{thm:3} and \cite{MR1282230} show that $\M_\infty$ is an infinite loop space, but we will not need this.} of that extension over $N$. More interestingly, if the map does \emph{not} extend then the Lagrangian generated cannot be compactly Hamiltonian isotopic to the zero section. Indeed, any isotopy would, by \cite{ref1}, lead to a generating function of the zero section which is an extension of this map.
\end{remark}

In general, we would therefore like to understand the homotopy type of $\M_\infty$. In particular, in the case $N=D^n$, its homotopy groups are relevant. We will show that this space is closely related to the stable pseudo-isotopy group of a point $\Ps_\infty$ (see for example \cite{MR689392}) and its delooping $\H_\infty$, which is the space of stable $h$-cobordisms of a point. For this we will need to consider the $J$-homomorphism:
\begin{align*}
  J : O=\colim_{n\to \infty} O(n) \to F=\colim_{n\to \infty} \Map^{\simeq}_*(S^n)
\end{align*}
where $\Map_*^{\simeq}$ denotes the space of based self-homotopy equivalences with stabilization maps given by taking reduced suspension. The map $J$ is defined by the based action of $O(n)$ on the sphere $S^n\cong\R^n\cup \{\infty\}=(\R^n)^+$. As $J$ is a map of monoids (in fact, a map of infinite loop spaces), it has a delooping $BJ:BO\to BF$ which sends a vector space to its associated sphere bundle. The homotopy quotient of $J$, which is also the homotopy fiber of $BJ$, is denoted $F/O$. In this paper, we show that the Hatcher--Waldhausen map $\hw: F/O \to \H_\infty$ can be defined as follows. Define
\begin{align*}
  Q_k^l(x,y) = -\norm{x}^2+\norm{y}^2
\end{align*}
for $(x,y) \in \R^k \times \R^l$. Consider the set $\{Q_k^l \leq -1\}$ (red in Figure~\ref{Fig:hwpic}). For a vector space $V\subset \R^{k+l}$ we let $DV$ denote its disk and $SV$ its sphere.
\begin{figure}[ht]
  \begin{center}
    \begin{tikzpicture}
      \clip (3,1.41) -- (-3,1.41) -- (-3,-1.41) -- (3,-1.41) -- cycle;
      \draw[line width=0.25cm,pink] (-2,0) to[out=45,in=200] (-1,0.6) to[out=10,in=130] (0.5,0.6) to[out=-50,in=0] (0,0) to[out=180,in=145] (-0.5,-0.6) to[out=-35,in=180] (2,0);
      \draw                         (-2,0) to[out=45,in=200] (-1,0.6) to[out=10,in=130] (0.5,0.6) to[out=-50,in=0] (0,0) to[out=180,in=145] (-0.5,-0.6) to[out=-35,in=180] (2,0);
      \fill[red] (3,1.41) -- plot[domain=1.4:-1.4] ({sqrt((\x)^2+1)},\x) -- (3,-1.41) -- cycle;
      \fill[red] (-3,1.41) -- plot[domain=1.4:-1.4] ({-sqrt((\x)^2+1)},\x) -- (-3,-1.41) -- cycle;
      \fill[blue] plot[domain=1.4:-1.4] (\x,{-sqrt((\x)^2+1)});
      \fill[blue] plot[domain=1.4:-1.4] (\x,{sqrt((\x)^2+1)});
      \draw[dashed, very thick] (3,1.41) -- (-3,1.41) -- (-3,-1.41) -- (3,-1.41) -- cycle;
    \end{tikzpicture}
  \end{center}
  \caption{The image of $i$ (black) and the smoothing defining $\hw$ (white).} \label{Fig:hwpic}
\end{figure}
Consider the space of pairs $(V,i)$ where $V \in \Gr_k(\R^{k+l})$ and $i : DV \to \{-1\leq Q_k^l \leq 1\}$ is a smooth embedding such that
\begin{itemize}
\item $i$ is the standard inclusion of $V \subset \R^{k+l}$ in a neighborhood of $0$.
\item The interior of $DV$ is mapped to $\{-1<Q_k^l<1\}$.
\item $i$ maps $SV$ transversely to $\{Q_k^l=-1\} \simeq S^{k-1}$ and this map is a homotopy equivalence.
\end{itemize}
The image of such an $i$ is illustrated by the black curve between the red and blue regions in Figure~\ref{Fig:hwpic}. We prove in Section~\ref{sec:Identify-FO} that the colimit of such spaces of disks for $k,l\to \infty$ is a model for $F/O$. Heuristically, the idea is that increasing $l$ essentially makes the embedding and transversality conditions redundant, and the map from $SV$ to $\{Q_k^l=-1\}$ is a homotopy trivialization of the sphere bundle $SV$. Given such a pair $(V,i)$, we pick a tubular neighborhood $T$ around $\im i$ (pink in the figure) and realize $\hw$ at level $(k,l)$ as a smoothing of the boundary of $\{-1\leq Q_k^l \leq 1\}\setminus T$ (white in the figure). Indeed, this is essentially a compactly supported $h$-cobordism from $\{Q_k^l=1\}$ to the other part of the boundary. Stably this defines an element in $\H_\infty$. We give details of this in Section~\ref{sec:Hatcher-Wald-map}.

\begin{remark}
  As pointed out by a referee, the definition of the Hatcher--Waldhausen map may be somewhat ambiguous in the literature. We will use the definition of the Hatcher--Waldhausen map from Corollary 3.4 in \cite{MR686115}. In Section~\ref{sec:Hatcher-Wald-map} we relate the above description to this definition. We also note that the result used in Appendix~\ref{sec:homot-groups-m_infty} from \cite{MR1923990} uses this definition, and that the result in \cite{MR764574} does not depend on the specific definition, since potential variations of the map only concern torsion.
\end{remark}

In Sections~\ref{sec:flabby-map},~\ref{sec:param-handle-attachm},~\ref{sec:Identify-FO} and~\ref{sec:Hatcher-Wald-map} we prove the following theorem. In Section~\ref{sec:flabby-map} we even prove an interesting unstable version of the theorem.
\begin{theorem} \label{thm:3}
  There is a fibration sequence
  \begin{align}
    \M_\infty \xrightarrow{\oN_\infty} F/O \xrightarrow{\hw} \H_\infty.
  \end{align}
\end{theorem}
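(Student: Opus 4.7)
The plan follows the three-section strategy indicated by the author. In the first stage (flabby map), I would introduce an auxiliary space $\mathcal{E}$ of parameterized functions on $\R^{2k}$ quadratic at infinity but without any restriction on their interior critical behavior, and construct a natural map $p \colon \mathcal{E} \to \H_\infty$ by sending a function to the $h$-cobordism carved out between two regular values sandwiching all critical values. A parameterized Cerf/Morse theory argument should then show that $p$ is a quasi-fibration whose homotopy fiber over the basepoint is exactly $\M_\infty$: a function quadratic at infinity whose associated $h$-cobordism is stably trivial can, after stabilization, be deformed through such functions to one with a single non-degenerate critical point, which is the defining condition for $\M_\infty$.

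In the second stage, I would identify $\mathcal{E} \simeq F/O$. The quadratic behavior at infinity equips the ambient $\R^{2k}$ with a canonical orthogonal structure, and after passing to the limit $k \to \infty$ and invoking Bott periodicity this supplies the denominator $O$. The remaining interior data, compared to the model $Q$ via a Pontryagin--Thom or duality construction, yields a stable self-equivalence of spheres and hence an element of $F$. The homotopy quotient $F/O$ then emerges because changes of the asymptotic quadratic form by orthogonal automorphisms act on this data in exactly the expected way. I expect this identification to be the main obstacle of the proof: the translation from analytic/Morse-theoretic data on Euclidean space to classifying-space data for stable bundles and stable self-equivalences of spheres is delicate, and requires carefully organizing all stabilizations so that the condition of being quadratic at infinity literally becomes the $O$ being quotiented out, rather than some homotopically enlarged group of framings at infinity.

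In the third stage, I would verify that under the identification $\mathcal{E} \simeq F/O$, the map $p$ coincides up to homotopy with the original Hatcher--Waldhausen map $\hw$ of \cite{MR686115}. Both constructions produce an $h$-cobordism from essentially the same bundle-theoretic input (a stable vector bundle together with a spherical trivialization), so the verification should reduce to unravelling both definitions in parallel and exhibiting a natural zigzag compatible with the identification established in stage two; no new geometric input should be required beyond naturality. Once this is in place, the fiber inclusion of the first stage, transported across the identification of the second, becomes the desired map $N_\infty \colon \M_\infty \to F/O$ and completes the fibration sequence.
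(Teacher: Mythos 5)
There is a genuine gap in stage one, and it propagates to stage two. Your auxiliary space $\mathcal{E}$ of all functions on $\R^{2k}$ equal to $Q$ outside a compact set ``without any restriction on their interior critical behavior'' is \emph{affine}, hence contractible: for any two such $f_0,f_1$ the linear path $(1-t)f_0+tf_1$ stays in the space. A contractible $\mathcal{E}$ cannot be $F/O$, and if $p\colon\mathcal{E}\to\H_\infty$ were a quasi-fibration its homotopy fiber would be $\Omega\H_\infty$, not $\M_\infty$; since the Hatcher--Waldhausen map is not null-homotopic these are genuinely different. Even the point fiber of $p$ over the basepoint is larger than you want and is not obviously deformable through $\mathcal{E}$ onto $\M_\infty$: your argument for contracting onto a single non-degenerate critical point is in effect a parametrized Morse cancellation inside a convex space, and there is no reason the result would have a single critical point rather than none.

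The paper's construction differs at exactly this point. The middle space is $\U_k\subset\H_k$, consisting of pairs $(f,b)$ where $b\in(0,1)$ is a regular value and, crucially, $f$ has \emph{exactly one} critical point in $f^{-1}((-\infty,b])$, non-degenerate, with critical value $0$ (and hence index $k$). This restriction is what makes the space non-contractible and gives the data needed for the $F/O$ identification: the tangent space of the unstable manifold at the unique critical point gives the map $N_k\colon\U_k\to\Gr_k(\R^{2k})\subset BO(k)$, and the boundary sphere of a small unstable disc, flowed out to the exit set $B\simeq S^{k-1}$ of the Conley pair $(A,B)$, gives a canonical trivialization of the associated sphere bundle, which is precisely the lift to $F^u(k-1)/O(k)$ (Lemma~\ref{lem:GenFam:3}). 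Your stage-two plan of reading off the $O$ factor from the quadratic behavior at infinity and the $F$ factor by Pontryagin--Thom against $Q$ does not supply this structure: the asymptotic quadratic form is fixed equal to $Q$ and not acted on, and without a distinguished critical point there is no unstable manifold from which to extract a spherical self-equivalence. Once you replace $\mathcal{E}$ by $\U_k$ and set up the fibration sequence as $\M_k\subset\U_k\subset\H_k$ (Proposition~\ref{prop:Tubes:1}), stages two and three proceed roughly in the spirit you sketch, with the $F/O$ identification via the unstable-disc data and the comparison to the Hatcher--Waldhausen map via the rigid-tube description, as in Sections~\ref{sec:Identify-FO} and~\ref{sec:Hatcher-Wald-map}.
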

Here the map $\oN_\infty$ is given by taking the tangent space of the unstable manifold at the critical point together with a canonical lift through $F/O \to BO$ that we describe in Section~\ref{sec:Identify-FO}. The geometric idea for why the composition in the theorem is null-homotopic is as follows. We will give a description of $\oN_\infty$ as recording the unstable manifold disk of the unique critical point and its tangent space at the critical point, essentially defining a point in the space of pairs $(V,i)$ considered above. The description above of the Hatcher--Waldhausen map now implies that we are attaching a tubular neighborhood of this unstable disk to $\{Q_k^l\leq -1\}$. However, since a function in $\M_\infty$ has only one critical point, which is inside the tubular neighborhood, the resulting $h$-cobordism (the white part in the figure) has a function defined on it with no critical points (and is standard at infinity). Such a ``collar'' defines a path from this element to the basepoint in $\H_\infty$.

Any Lagrangian $L\subset T^*D^n$ has a Lagrangian Gauss map $L \to U(n)/O(n)$ given by sending each point to its tangent space, viewed as a linear Lagrangian in $\R^{2n}$. The stable version of this map has target $U/O=\colim_{n\to \infty} U(n)/O(n)$. Restricting this map to $S^{n-1}=\partial L$, it is trivial and thus defines an element in $[L/S^{n-1},U/O] \cong \pi_n(U/O)$. In Section~\ref{sec:lagrangian-gauss-map}, we relate this to the map $\M_\infty \xrightarrow{\oN_\infty} F/O\to BO$ in the theorem above. In \cite{MR764574} B\"okstedt proved that this composition is rationally trivial. We will use this to prove the following result.

\begin{theorem}\label{thm:4}
  With $L$ as in Theorem~\ref{thm:1} above, the stable Lagrangian Gauss map (relative to the boundary) $L/S^{n-1} \to U/O$ is null-homotopic.
\end{theorem}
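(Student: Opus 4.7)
The plan is to combine Theorem~\ref{thm:1}, Theorem~\ref{thm:3}, and B\"okstedt's theorem, via a comparison between the stable Lagrangian Gauss map and the map $\M_\infty \xrightarrow{N_\infty} F/O\to BO$ that will be established in Section~\ref{sec:lagrangian-gauss-map}. First, by Theorem~\ref{thm:1}, $L$ admits a generating family $F\colon D^n \times \R^{2k}\to \R$ quadratic at infinity. Since $L$ agrees with the zero-section in a neighborhood of $\partial D^n$, we may further arrange $F_x = Q$ identically on a collar of $\partial D^n$ (the obstruction to such a normalization lives in a contractible space of stable choices, coming from $\M_\infty$). Recall from the introduction that $L$ is contractible, so $L/S^{n-1}\simeq S^n$, and the stable Lagrangian Gauss map defines a class in $\pi_n(U/O)$.

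The core of the argument, to be carried out in Section~\ref{sec:lagrangian-gauss-map}, is the following identification. From the normalized generating family $F$ one extracts, via the adjoint construction described in the introduction, a class in a homotopy group of $\M_\infty$; its image under $(N_\infty)_*$ into $\pi_*(F/O)$, followed by the map to $BO$, is identified with the stable Lagrangian Gauss class of $L$ relative to $S^{n-1}$. Heuristically, this is because the tangent Lagrangian plane of $L$ at a point $p$, transported through the critical-point correspondence $\Sigma_F\cong L$, is recovered stably from the negative eigenspace of the fiber-wise Hessian of $F$ at the corresponding critical point, which is precisely the data entering the definition of $N_\infty$.

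Granting this identification, the conclusion follows immediately from Theorem~\ref{thm:3} together with B\"okstedt's theorem \cite{MR764571}: the Hatcher-Waldhausen map $\hw\colon F/O\to \H_\infty$ is a rational equivalence, which, as will be explained in Section~\ref{sec:lagrangian-gauss-map}, is equivalent to $N_\infty$ inducing the zero map on all homotopy groups. Hence the composition $\M_\infty \xrightarrow{N_\infty} F/O \to BO$ vanishes on $\pi_*$, so the stable Lagrangian Gauss class is zero in $\pi_n(U/O)$, and the stable Lagrangian Gauss map is null-homotopic.

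The principal obstacle is the identification to be established in Section~\ref{sec:lagrangian-gauss-map}. In particular, one must relate the tangent Lagrangian plane to $L$ at a point with the stable negative eigenspace of the fiber-wise Hessian of $F$; handle the fact that $L$ typically meets a fiber of $T^*D^n$ in more than one point, so that the association with $\M_\infty$ cannot proceed by naively reading off the fiber functions $F_x$; and promote $BO$-level information to the finer $F/O$-level information, by exploiting the unstable-manifold trivializations implicit in the construction of $N_\infty$. Once these are in place, the combination of Theorem~\ref{thm:3} and B\"okstedt's theorem finishes the argument.
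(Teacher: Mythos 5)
Your high-level plan matches the paper's: combine Theorem~\ref{thm:1}, the identification of the boundary adjoint's $N_\infty$-image with the relative stable Gauss class (Proposition~\ref{prop:Gauss:1}), the fibration sequence of Theorem~\ref{thm:3}, and B\"okstedt's theorem. However, there is a genuine gap at the key step: you pass from ``$\hw$ is a rational equivalence'' to ``$N_\infty$ is zero on all homotopy groups'' by asserting an equivalence that you defer to Section~\ref{sec:lagrangian-gauss-map}, but that section does not establish it, and the implication is not automatic. A rational equivalence $\hw$ only tells you that $\M_\infty$ has torsion homotopy groups, hence that the image of $\pi_*(\M_\infty)$ in $\pi_*(BO)$ is torsion; it does not, by itself, kill that torsion.

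The paper closes this gap with two extra observations. First, the torsion in $\pi_*(BO)$ consists solely of $\Z/2$'s in degrees $\equiv 1,2 \pmod 8$, so the image of $S^{n-1}\to\M_\infty\to F/O\to BO$ on homotopy can at worst land in these $\Z/2$'s. Second, because the map $\M_\infty\to BO$ factors through $F/O$ (Lemma~\ref{lem:GenFam:3}), its composition with $BJ\colon BO\to BF^u$ is null, so the image lies in $\ker(BJ_*)$; and Adams' theorem says $BJ_*$ is injective on exactly those $\Z/2$'s. Combining the two facts forces the image to vanish. Without an argument of this kind (or an explicit proof that $\hw$ being a rational equivalence implies $N_\infty$ is literally null on homotopy, rather than merely rationally null), your ``hence the composition vanishes on $\pi_*$'' does not follow. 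You should add this 2-torsion and $BJ$-injectivity step to make the proof complete.
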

It was already known from \cite{MR3465849} that this class had to lie in the kernel of $\pi_n(U/O) \cong \pi_{n-1}(\Z \times BO) \xrightarrow{\Z\times BJ_*} \pi_{n-1}(\Z\times BF)$. However, even though the image for $n=4k+1$ grows large with $k$, the kernel is never $0$ (except for $k=0$). Indeed, the homotopy groups of the image are finite except for $n=1$.

It is well-known that Gromov's $h$-principle for Lagrangian immersions tells us that the set of Lagrangian immersion classes of a given homotopy sphere into $T^*S^n$ is canonically identified with $\pi_n(U)$. This is a lift of the Gauss map under the canonical map $U\to U/O$, and since $\pi_{4k+1}(U) \to \pi_{4k+1}(U/O)$ is injective, the above theorem implies the following corollary.
\begin{corollary}\label{thm:4cor}
  For $n=4k+1$ the Lagrangian embedding $L^+ \subset T^*S^n$ represents the trivial immersion class $0\in \pi_n(U)$.
\end{corollary}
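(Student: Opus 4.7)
The strategy is to combine Theorem~\ref{thm:4} with the long exact sequence of the fibration $O \to U \to U/O$ and a standard Bott periodicity input.

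As observed in the text preceding the corollary, the immersion class $[L^+] \in \pi_n(U)$ provided by Gromov's $h$-principle is a lift of the stable Lagrangian Gauss map class of $L^+$ under the canonical projection $p_\ast : \pi_n(U) \to \pi_n(U/O)$. Since $L^+$ is obtained from $L$ by attaching a standard zero-section disc near the point at infinity, on which the Gauss map is trivial, the absolute Gauss map class of $L^+ \subset T^*S^n$ coincides (under the evident collapse $L^+ \to L/S^{n-1}$) with the relative Lagrangian Gauss map class of $L$ considered in Theorem~\ref{thm:4}. That theorem gives $p_\ast[L^+] = 0$.

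The fibration $O \to U \to U/O$ yields the exact sequence
\begin{align*}
  \pi_n(O) \xrightarrow{c_\ast} \pi_n(U) \xrightarrow{p_\ast} \pi_n(U/O),
\end{align*}
where $c_\ast$ is induced by complexification. Exactness together with $p_\ast[L^+]=0$ forces $[L^+] \in \im(c_\ast)$. For $n = 4k+1$, Bott periodicity gives $\pi_n(U) = \Z$ and $\pi_n(O)$ equal to $\Z/2$ (when $k$ is even) or $0$ (when $k$ is odd). In either case $c_\ast$ vanishes, since a torsion group admits no nontrivial homomorphism to a torsion-free group. Hence $p_\ast$ is injective in this degree and $[L^+] = 0 \in \pi_n(U)$.

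The only step requiring real care is the identification of the two Gauss map classes (for $L^+$ absolutely and for $L$ relatively), and this amounts to checking that the capping disc at infinity carries a trivial stable Lagrangian Gauss map, which is immediate from its construction as part of the zero-section. The remainder is a formal diagram chase combined with the above Bott periodicity computation; I do not expect either to present a serious obstacle.
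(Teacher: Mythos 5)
Your proposal is correct and follows essentially the same route as the paper: the immersion class lifts the stable Lagrangian Gauss map class under $\pi_n(U)\to\pi_n(U/O)$, Theorem~\ref{thm:4} kills the latter, and injectivity of $\pi_n(U)\to\pi_n(U/O)$ for $n=4k+1$ finishes the argument. The paper leaves both the identification of the absolute Gauss map of $L^+$ with the relative Gauss map of $L$ and the Bott-periodicity computation of injectivity (via the exact sequence of $O\to U\to U/O$) implicit; your write-up simply makes these two routine steps explicit, and both are carried out correctly.
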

Note that this is not the same as saying that $L^+$ is isotopic through immersions to the zero section. Indeed, there is still the obstruction given by the possible exotic smooth structure on $L^+$.

It follows from Theorem~\ref{thm:3} and B\"okstedt's result that $\M_\infty$ has finite homotopy groups, but more refined information about the homotopy groups of $\M_\infty$ in low degrees can be inferred from \cite{MR1988283}, \cite{MR1923990}, \cite{MR3921317} and the recent preprint \cite{K8is0}. We carry out the details of this in Appendix~\ref{sec:homot-groups-m_infty} where we argue that the homotopy groups
\begin{table}[ht]
  \begin{center}
    \begin{tabular}{|l|cccccccccc|}
      \hline
      $n$ &0&1&2&3&4&5&6&7&8&9 \\
      \hline
      $\pi_n(\M_\infty)$ &0&0&0&$\Z/m_1$&0&$0$&$0$&$\Z/m_2$& $\Z/8$ or $\Z/2\oplus \Z/4$ &$\Z/3$ or 0 \\
      \hline
    \end{tabular}
  \end{center}
  \caption{Here the $m_i$ are unknown odd integers.}\label{tab:ind}
  \vspace{-0.55cm}
\end{table}
are as listed in Table~\ref{tab:ind}. This table implies that the generating functions constructed in this paper can be extended over the point at infinity to generate $L^+ \subset T^*S^n$ in the cases $n \in \{1,2,3,5,6,7\}$ (and possibly $n=10$).

\begin{remark}
  Work in progress by the present author and Eliashberg involves extending the injectivity theorem from \cite{MR1732407} to get nontrivial results about homotopy groups of spaces of Legendrians. This construction also relies heavily on algebraic K-theory of spaces, but it is not clear how this relates to the current construction (nor the newer, more general construction in \cite{MR4905540}). In fact, it seems to be of a different nature.
\end{remark}

\subsection{Acknowledgments}

I would like to thank Mohammed Abouzaid, Marcel B\"okstedt, Sylvain Courte, Tobias Ekholm, Yasha Eliashberg, S\o ren Galatius, and John Rognes for conversations about the material in this paper. I would also like to thank the anonymous referees for their many corrections and suggestions. In particular, I am grateful for the suggested simplification of the proof of Lemma~\ref{cor:Theorem1:1}.

\section{A Hamiltonian moving a part of the zero section to $L$} \label{sec:an-infin-dimens}

Let $L \subset D^{2n} \subset T^*D^n$ be an exact Lagrangian submanifold equal to the zero section near the boundary of the disk (so $\partial L = S^{n-1}$). In this section we construct a compactly supported Hamiltonian flow on $\R^{2n}=T^*\R^n$ which moves the zero section $\R^n$ to a Lagrangian $K$ that contains $L$, and such that the other parts of the image that land in $T^*D^n$ have primitive values much lower than those on $L$. Parts of this section only work for $n \geq 5$. In Section~\ref{sec:extending-nleq-4} we will describe modifications to the arguments for $n\leq 4$.

Let $L_\infty=L\cup (\R^n-D^n)$ be the extension by the zero section in $\R^{2n}$. In this section we will denote coordinates in (this copy of) $\R^{2n}$ by $z=(x,y)$ with $x,y\in \R^n$. Let $x_0=(-1,0,\dots,0)\in \R^n$ and let $f_{L_\infty}:L_\infty \to \R$ be the unique primitive of the standard Liouville form $ydx$ that is compactly supported (equivalently $f_{L_\infty}(x_0)=0$).

\begin{lemma}\label{lem:Action:4}
  For $n\geq 5$ there is a diffeomorphism $L_\infty \cong \R^n$ which outside $D_2^n$ is given by $x \mapsto \norm{x} \varphi(\hat x)$ where $\varphi : S^{n-1} \cong S^{n-1}$ is a diffeomorphism and $\hat x$ is the normalization of $x$. Furthermore, we can assume that in a neighborhood of $x_0$ it is given by $x \mapsto x-x_0$. In particular, it maps $x_0$ to $0$.
\end{lemma}

We will call a diffeomorphism $\R^n\to\R^n$ \emph{radial} at $x' \in \R^n$ if it is of the form $x \mapsto \norm{x} \varphi(\hat x)$ (as in the lemma) in a neighborhood of $x'$. 

\begin{proof}
  Recall from the introduction that $L_\infty$ is contractible and hence $L$ is a smooth homotopy ball with standard boundary $S^{n-1}$. Using the $h$-cobordism theorem (see Milnor \cite{MR0190942}), we can find an orientation-preserving diffeomorphism $\varphi: L \cong D^n$ which, restricted to the boundary, is a possibly exotic diffeomorphism $\varphi_{\mid S^{n-1}}:S^{n-1} \cong S^{n-1}$. We may adjust $\varphi$ to be radial in a neighborhood of the boundary, so that we may extend it radially to identify all of $L_\infty$ with $\R^n$. It is thus of the desired form outside $D_2^n$. We can now post-compose with an ambient isotopy of $\R^n$ with support in $D_2^n$ to make it of the form $x\mapsto x-x_0$ in a neighborhood of $x_0$.
\end{proof}

We use such a diffeomorphism to induce a flat metric on $L_\infty$ (when $n\geq 5$) and to induce symplectic coordinates on $T^*L_\infty$, which we denote by $(q,p) \in \R^n\times \R^n$. By the translation property in the lemma, we have $(x,y)=(q+x_0,p)$ in a neighborhood of the fiber $T^*_{x_0}\R^n$, and the Riemannian structures agree.

Fix a $\delta<1$ such that for some $\delta'<\delta$ the Lagrangian $L_\infty$ equals the zero section outside $D_{\delta'}^{2n}$. We define $L_\delta = L \cap D_\delta^{2n}$, which is in the interior of $L$. We will denote the \emph{closed} unit disk and sphere bundles of a cotangent bundle $T^*M$ by $D^*M$ and $S^*M$ respectively, and we will denote the disks and sphere bundles of radius $r$ by $D_r^*M$ and $S_r^*M$ respectively.

\begin{lemma} \label{lem:Action:1}
  For any Riemannian structure on $L_\infty$ induced by a radial diffeomorphism $\varphi : L_\infty \cong \R^n$ there exists an $\epsilon>0$ such that the embedding $L_\infty \subset \R^{2n}$ extends to a symplectic embedding
  \begin{align} \label{eq:2}
    D_{\epsilon}^*L_\infty \subset D^*\R^n
  \end{align}
  which over $L_\infty-L_\delta$ is induced by the identification $L_\infty-L_\delta = \R^n-D_\delta^n$ (equal as subsets inside the zero section $\iota:\R^n \subset \R^{2n}$), and hence respects fibers. Furthermore, we may assume that this is the only part of the image landing in $T^*(\R^n-D_\delta^n)$.
\end{lemma}

Note that even though this respects fibers outside $D_\delta^n$, the Riemannian structures may not agree---hence the image may not be a disk bundle in the target structure. However, we made sure that in a neighborhood of $x_0$ these do in fact agree.

\begin{proof}
  With $\delta'<\delta$ as above we first define $L_{\beta} = L\cap D_{\beta}^{2n}$ for some $\beta \in (\delta',\delta)$ and $L_\beta^c=\overline{L_\infty-L_\beta} \subset \R^n$. We thus have the canonical symplectic identification $T^*L_\beta^c = T^*(\overline{\R^n-D_\beta^n})$ which extends the embedding over $L_\beta^c$ to $\psi : D_\epsilon^*L_\beta^c \to \R^{2n}$. As the diffeomorphism $L_\infty \to \R^n$ inducing the Riemannian structure is radial outside $D_2^n$, its derivative has a global bound, and it follows that for small enough $\epsilon$ this lands in $D^*\R^n$.
  
  We may extend the domain to include a small symplectic neighborhood of $L_\beta \subset T^*L_\beta$ by using the Darboux-Weinstein symplectic neighborhood theorem on the map
  \begin{align*}
    \psi \cup \iota : (D_\epsilon^* L_\beta^c)\cup L_\beta \to \R^{2n}.
  \end{align*}
  By compactness of $L_\beta$ there is a possibly smaller $\epsilon>0$ such that this defines an embedding $D_\epsilon^*L_\infty \subset D^*\R^n$. By the choice of $\delta>\beta$ we may shrink $\epsilon$ a bit more to make sure that the image of $D_\epsilon^*L_\beta$ avoids the parts over $\R^n-L_\delta$.
\end{proof}

Consider a non-compactly supported Hamiltonian of the form
\begin{align*}
  H(x,y) = f(\norm{(x,y)})
\end{align*}
defined on all of $\R^{2n}$.

\begin{lemma} \label{lem:Action:2}
  For any smooth function $\theta: [0,\infty) \to \R$ with $\theta(r)$ constant for $r$ close to 0, there exists an $f: [0,\infty) \to \R$ such that $H(x,y)=f(\norm{(x,y)})$ is smooth and its Hamiltonian time-1 flow is given by $z \mapsto e^{i\theta(\norm{z})}z$.
\end{lemma}

We will refer to such a Hamiltonian isotopy as a \emph{generalized rotation}.

\begin{proof}
  A direct computation shows that the Hamiltonian vector field of such an $H$ is given by:
  \begin{align*}
    (X_H)_z = \frac{f'(\norm{z})}{\norm{z}}(J_0 z)
  \end{align*}
  whose time-1 flow is given by $z\mapsto e^{i\frac{f'(\norm{z})}{\norm{z}}}z$. Given $\theta$ as in the lemma, we can solve for $f$ so that this factor is equal to $\theta(r)$ by setting
  \begin{align*}
    f(r) = \int_0^r \theta(s)sds.
  \end{align*}
  The assumptions on $\theta$ imply that $f(r)=c r^2$ close to $0$ so that $H$ is globally smooth.
\end{proof}

Now fix a smooth decreasing function $\theta:[0,\infty) \to [0,\tfrac{\pi}{2}]$ so that
\begin{itemize}
\item $\theta(r)=\tfrac{\pi}{2}$ for $0\leq r\leq 1$,
\item $1<r\sin(\theta(r))\leq 2$ for $r>1$ and
\item $\theta(r) = \arcsin(\tfrac{2}{r})$ for $r\geq 3$.
\end{itemize}
Note that the second condition is easily solved compatibly with the others by rewriting it as $\theta(r) \in (\arcsin(\tfrac{1}{r}),\arcsin(\tfrac{2}{r})]$. If we apply the generalized rotation associated with this $\theta$ to the zero section $\R^n \subset \C^n$, we get a Lagrangian $F_\infty' \subset \R^{2n}=\C^n$ (illustrated as the red line in Figure~\ref{fig:rotate}) which
\begin{figure}[ht]
  \centering
  \definecolor{lgr}{rgb}{0.95,0.95,0.95}
  \begin{tikzpicture}[scale=0.4]
    \draw[thin,->] (-5,0) -- (5,0) node[below] {$x$};
    \draw[thin,->] (0,-2.9) -- (0,2.9) node[left] {$y$};
    \draw[red,thick] (0,0) -- (0,1) to[out=90,in=180] (2,2) -- (7,2);
    \draw[red,thick] (0,0) -- (0,-1) to[out=270,in=0] (-2,-2) -- node[above] {$F_\infty'$} (-7,-2);
    \draw (0,1) node[left] {$1$};
    \draw (0,-1) node[left] {$-1$};
    \draw (0,2) node[left] {$2$};
    \draw (0,-2) node[left] {$-2$};
  \end{tikzpicture}
  \caption{Image of rotated zero section for $n=1$}
  \label{fig:rotate}
\end{figure}
satisfies $F_\infty' \cap D^*\R^n = iD^n$. Indeed, the second bullet point above implies that for $x\in \R^n$ with $\norm{x}> 1$ the norm of the imaginary part of $e^{i\theta(\norm{x})}x$ must lie in $(1,2]$. We also conclude that $F_\infty' \subset D^*_2\R^n$.

\begin{lemma} \label{lem:Action:3}
  For any $R>1$ there is a compactly supported generalized rotation of the zero section to a Lagrangian $F_R'$ such that
  \begin{align*}
    F_R' \cap T^*D_{R+1}^n = F_\infty' \cap T^*D_{R+1}^n.
  \end{align*}
\end{lemma}

The Lagrangian $F_R'$ is illustrated in Figure~\ref{fig:rotate2}.
\begin{figure}[ht]
  \centering
  \definecolor{lgr}{rgb}{0.95,0.95,0.95}
  \begin{tikzpicture}[scale=0.4]
    \draw[thin,->] (-13,0) -- (13,0) node[below] {$x$};
    \draw[thin,->] (0,-2.9) -- (0,2.9) node[left] {$y$};
    \draw[red,thick] (0,0) -- (0,1) to[out=90,in=180] (2,2) -- (8,2) to[out=0,in=180] (12,0)  -- (14,0);
    \draw[red,thick] (0,0) -- (0,-1) to[out=270,in=0] (-2,-2) -- node[above] {$F_R'$} (-8,-2) to[out=180,in=0] (-12,0)  -- (-14,0);
    \draw (0,1) node[left] {$1$};
    \draw (0,-1) node[left] {$-1$};
    \draw (0,2) node[left] {$2$};
    \draw (0,-2) node[left] {$-2$};
  \end{tikzpicture}
  \caption{Image of rotated zero section with compact support}
  \label{fig:rotate2}
\end{figure}

\begin{proof}
  As the conditions get stronger when we make $R$ larger, we may assume that $R>2$. Define a new function $\theta_R$ by $\theta_R(r) = \theta(r)$ for $r\leq R+2$, and then bump it off for $r>R+2$, so that it equals 0 outside a compact set. We let $\phi_R : \C^n \to \C^n$ denote the associated compactly supported generalized rotation. We define $F_R'=\phi_R(\R^n)$ and prove below that this satisfies the conditions. We may of course assume that $0 \leq \theta_R \leq \theta$ which again implies that $F_R' \subset D_2^*\R^n$. 

  As the norm is preserved by generalized rotations, it follows that if $z$ is any point where $F_\infty'$ and $F_R'$ disagree then $\norm{z}\geq R+2$. Since the imaginary part is bounded in norm from above by $2$ and $R>2$, its real part must satisfy
  \begin{align*}
    \norm{\operatorname{Re} z}^2 = \norm{z}^2-\norm{\operatorname{Im} z}^2 \geq ((R+1)+1)^2-2^2 > (R+1)^2
  \end{align*}
  and therefore $z$ is \emph{not} contained in $T^*D_{R+1}^n$.
\end{proof}

Let $F_R$, for $R\in[2,\infty]$, be given by the shift $F_R'+(x_0,0)$ where $F_R'$ is as in the lemma above for $R<\infty$. This is illustrated in Figure~\ref{fig:rotate3}.
\begin{figure}[ht]
  \centering
  \definecolor{lgr}{rgb}{0.95,0.95,0.95}
  \begin{tikzpicture}[scale=0.4]
    \draw[thin,->] (-13,0) -- node[below left] {$x_0$} (13,0) node[below] {$x$};
    \draw[thin,->] (1,-2.9) -- (1,2.9) node[left] {$y$};
    \draw[red,thick] (0,0) -- (0,1) to[out=90,in=180] (2,2) -- (8,2) to[out=0,in=180] (12,0)  -- (14,0);
    \draw[red,thick] (0,0) -- (0,-1) to[out=270,in=0] (-2,-2) -- node[below] {$F_R$} (-8,-2) to[out=180,in=0] (-12,0)  -- (-14,0);
    \draw (1,1) node[right] {$1$};
    \draw (1,-1) node[right] {$-1$};
    \draw (1,2) node[right] {$2$};
    \draw (1,-2) node[right] {$-2$};
  \end{tikzpicture}
  \caption{Image of bumped off rotated zero section for $n=1$}
  \label{fig:rotate3}
\end{figure}
The above lemma implies the following (which explains why we used $R+1$ in the lemma above).

\begin{corollary} \label{cor:Action:1}
  For any $R>1$ there is a compactly supported Hamiltonian isotopy of the zero section to a Lagrangian $F_R$ such that
  \begin{align*}
    F_R \cap T^*D_R^n = F_\infty \cap T^*D_R^n.
  \end{align*}
  In particular $F_R\cap D^*D_R^n = D^*_{x_0}\R^n = iD^n + (x_0,0)$.
\end{corollary}

As $(x_0,0)$ corresponds to $(0,0)$ in $(q,p)\in T^*L_\infty$ coordinates, the plan is roughly to undo this rotation inside the neighborhood $D_\epsilon^*L_\infty\cong D^*_\epsilon \R^n$ (the latter using $(q,p)$ coordinates) to rotate the Lagrangian back to the zero section $L_\infty$---at least over the compact subset $L$. The end result is illustrated in Figure~\ref{fig:Rotationqp2}. Concretely, we apply a generalized rotation in symplectic coordinates of the form $(q',p')= (c^{-1}q,cp) \in \R^{2n}$ depending on some large $c>0$. We want the rotation to happen within the set where $\norm{p} < \epsilon$ which corresponds to $\norm{p'}<c\epsilon$. So we let $f_c(r)$ be a solution to the rotation from Lemma~\ref{lem:Action:2} where $\theta=\theta_c$ is a smooth increasing extension of:
\begin{align*}
  \overline{\theta_c}(r) =
  \begin{cases}
    -\tfrac{\pi}{2} & r < \tfrac{c\epsilon}{2} \\
    0 & r > c\epsilon
  \end{cases}.
\end{align*}
This defines a Hamiltonian which in $(q,p)$ coordinates is given by
\begin{align} \label{eq:3}
  H(q,p) = f_c(\sqrt{c^{-2}\norm{q}^2+c^2\norm{p}^2})
\end{align}
In $(q,p)$ coordinates the resulting flow applied to the fiber over $q=0$ is illustrated by the red part in Figure~\ref{fig:Rotationqp2}.
\begin{figure}[ht]
  \centering
  \definecolor{mygreen}{rgb}{0.9,1,0.75}
  \begin{tikzpicture}
    \fill[mygreen] (-0.0,-1.85) -- (0.8,-1.85) -- (0.8,1.85) -- (-0.0,1.85) -- cycle;
    \draw[->] (-4.4,0) -- (5,0) node[below] {$q$};
    \draw[->] (0,-1.7) -- (0,1.7) node[left] {$p$};
    \draw (0,0) ellipse (4 and 1);
    \draw[dashed] (0,0) ellipse (2 and 0.5);
    \draw[red, ultra thick] (2,0) .. controls (3,0.3) and (0,0.6) .. (0,1);
    \draw[red, ultra thick] (-2,0) .. controls (-3,-0.3) and (0,-0.6) .. (0,-1);
    \draw[red, ultra thick] (4.2,1.7) -- (4.7,1.7);
    \draw (4.7,1.7) node[right] {$S_c$};
    \draw[red, ultra thick] (4.2,1.3) -- (4.7,1.3);
    \draw[green, ultra thick, dashed] (4.2,1.3) -- (4.7,1.3);
    \draw (4.7,1.3) node[right] {$L\subset S_c$};
    \fill[mygreen] (4.2,0.9) -- (4.7,0.9) -- (4.7,0.5) -- (4.2,0.5) -- cycle;
    \draw (4.7,0.7) node[right] {$T^*L$};
    \draw[red, ultra thick,dotted] (0,-1.85) -- (0,-1.5);
    \draw[red, ultra thick,dotted] (0,1.85) -- (0,1.5);
    \draw[red, ultra thick] (0,-1.5) -- (0,-1);
    \draw[red, ultra thick] (0,1.5) -- (0,1);
    \draw[red, ultra thick] (-2,0) -- (2,0);
    \draw[ultra thick, green, dashed] (-0.0,0) -- (0.8,0);
    \draw (2,-0.1) -- node[below] {$\tfrac{c^2\epsilon}{2}$} (2,0.1);
    \draw (4,-0.1) -- node[below] {$c^2\epsilon$} (4,0.1);
    \draw (0,1) node[above left] {$\epsilon$};
    \draw[->] (0.7,1.2) -- (0.5,0.77);
    \draw[->] plot[smooth] coordinates {(0.9,1.2) (1,0) (0.1,-0.8)};
    \draw (0.6,1.1) node[above right] {$K_1$};
  \end{tikzpicture}  
  \caption{Rotation in $(q,p)$ coordinates.}
  \label{fig:Rotationqp2}
\end{figure}

\begin{lemma} \label{lem:Action:5}
  Let $S_c$ be the result of the Hamiltonian flow of the Hamiltonian from Equation~\eqref{eq:3} applied to the fiber $T^*_{x_0}L_\infty \subset T^*L_\infty$ and let $p_c:S_c \to \R$ be the unique primitive of the Liouville form $\lambda_{L_\infty}$ in $T^*L_\infty$ whose value is $0$ at $(q,p)=(0,0)$. For any $C>0$ there is $c>0$ large enough such that
  \begin{align*}
    S_c\cap T^*L = L \sqcup K_1
  \end{align*}
  with primitive $p_c=0$ on $L$ and $\sup_{K_1}p_c < -C$.
\end{lemma}

Note that $L \subset L_\infty$ is compact with $x_0$ on its boundary.

\begin{proof}
  This follows by construction. In particular, the statement about the primitive follows from the fact that the part of the red curve away from the zero section stays out of the dashed ellipse in Figure~\ref{fig:Rotationqp2}, and thus the signed symplectic area below the curve, as it returns to $K_1$, goes to $-\infty$ as $c\to\infty$. Note in particular that on the non-compact part, which stays inside the fiber, the primitive is a constant that depends on $c$.
\end{proof}

We now have all the pieces for proving the following proposition.

\begin{proposition} \label{prop:Action:1}
  For $n\geq 5$ there exists a compactly supported Hamiltonian isotopy of the zero section $\R^n \subset \C^n$ to a Lagrangian $K$ such that
  \begin{align*}
    K\cap T^*D^n = L \sqcup K_1 \qquad \textrm{and} \qquad K\cap T^*_{x_0}\R^n = \{(x_0,0)\}
  \end{align*}
  and such that for any primitive $p_K: K \to \R$ of the Liouville form there is a $C\in \R$ such that
  \begin{align*}
    \min_{z \in L} p_K(z) > C > \max_{z \in K_1} p_K(z).
  \end{align*}
\end{proposition}

\newcommand{\marg}{0.05}
\begin{figure}[ht]
  \centering
  \begin{tikzpicture}[scale=0.5]
    \draw[red,very thick] (0,1) to[out=90,in=180] (2,2) -- (8,2) to[out=0,in=180] (12,0)  -- (12.5,0);
    \draw[red,very thick] (0,-1) to[out=270,in=0] (-2,-2) -- (-8,-2) to[out=180,in=0] (-12,0)  -- (-12.5,0);
    \draw[red,very thick] (0,-1) -- (0,-0.3);
    \draw[red,very thick] (0,1) -- (0,0.3);
    \draw[red,very thick] (0,-0.3) .. controls (0,-0.2) and (-8.5,-0.3) .. (-7.5,-0);
    \draw[red,very thick] (0,0.3) .. controls (0,0.2) and (8.5,0.3) .. (7.5,0);
    \draw[red,very thick] (-7.5,0) -- (7.5,0);
    \fill[fill=pink] (\marg,-1+\marg) -- (2-\marg,-1+\marg) -- (2-\marg,1-\marg) -- (\marg,1-\marg) -- cycle;
    \draw[green] (0.4,-3) -- (-0.4,3);
    \draw (0,0.1) node[below] {$x_0$}  -- (0,-0.1);
    \draw[->] (-8,0) -- (8,0) node[below] {$x$};
    \draw[->] (1,-3) -- (1,3) node[left] {$y$};
    \draw[red] (0.98,0) node {\Huge ?}; 
  \end{tikzpicture}
  \caption{The red line is an illustration of $K'$ from the proof of Proposition~\ref{prop:Action:1}. Inside the pink box, $K'$ consists of two copies of $L$ close to each other. However, since the flow in the box is far from being given by complex rotation in $(x,y)$ coordinates, we choose not to illustrate it at all.}
  \label{fig:flowresult}
\end{figure}

\begin{proof}
  All Lagrangians considered in this proof are connected and contain $x_0$, and we make primitives unique by always choosing the one that is $0$ at $x_0$. This only changes the desired inequality up to a constant shift, which we may ignore. The Lagrangian $F_R$ from Corollary~\ref{cor:Action:1} satisfies $F_R\cap T^*D^n = F_\infty \cap T^*D^n$ and this piece of it is compact and connected. It follows that the primitive, say $f_{F_\infty} : F_R\cap T^*D^n \to \R$, is independent of $R$ and that there is a $C_1>0$ such that $|f_{F_\infty}(z)-f_{F_\infty}(z')| < C_1$ for $z,z' \in F_\infty\cap T^*D^n$.

  The difference between the two Liouville forms $\lambda_{L_\infty} - ydx$ on $D_\epsilon^*L_\infty$ is compactly supported. Indeed, in Lemma~\ref{lem:Action:1} we made sure that outside a compact set the embedding was given by the canonical identification of cotangent bundles---hence the Liouville forms agree. By exactness this difference is the differential of a function $P:D_\epsilon^*L_\infty \to \R$ which extends $f_{L_\infty} : L_\infty\to \R$. Since $f_{L_\infty}$ is compactly supported we get that $P$ is compactly supported. Hence there is a $C_2>0$ bounding the absolute value of $P$. Since $P(x_0)=0$ and we are fixing primitives to be $0$ at $x_0$ we conclude that the two primitives (one for each Liouville form) on $S_c$ (from Lemma~\ref{lem:Action:5}) inside $D_\epsilon^*L_\infty$ differ exactly by $P$ restricted to $S_c$. Hence they differ by at most $C_2$.

  Now fix $c>0$ in Lemma~\ref{lem:Action:5} such that, on the part of $S_c$ inside $D_\epsilon^*L$ where the primitive $p_c$ is negative, it is in fact strictly less than $-2C_2-C_1$. The Hamiltonian in that Lemma is compactly supported inside $D_\epsilon^*L_\infty$ and thus extends to a Hamiltonian on $\R^{2n}$ (using $(x,y)$ coordinates) with support inside $D^*D_R^n$ for some $R>1$, which we now fix.  
  
  Now, we define $K'$ (which will be modified slightly later to become $K$) as the Lagrangian given by first using this $R$ in Lemma~\ref{lem:Action:3} to get a Lagrangian $F_R$ that coincides with the fiber over $x_0$ in $D_\epsilon^* L_\infty\cap D^*D_R^{n}$. Then we use Lemma~\ref{lem:Action:5} with our chosen $c>0$ inside this subspace to flow $F_R$ to $K'$. By construction we have $K'\cap T^*D^n=L \sqcup K_1'$. See Figure~\ref{fig:flowresult} for a heuristic picture of $K'$.

  By construction, the primitive (with respect to $ydx$) which equals 0 at $x_0$ for $K'\subset \R^{2n}$ is given by $P=f_{L_\infty}$ on the part of $K'$ in $T^*D^n$ which coincides with $L$. This is bounded from \emph{below} by $-|P|$ hence bounded from below by $-C_2$.

  We claim that the primitive on $K_1'$ is strictly bounded from \emph{above} by $-C_2$. Indeed, by our choice of $c$ the primitive inside $D_\epsilon^* L_\infty$ on $K_1'$ with respect to $\lambda_{L_\infty}$ is bounded from above by $-2C_2-C_1$. Hence by the bound on $P$ the primitive inside $D_\epsilon^*L_\infty$ with respect to $ydx$ is bounded strictly from above by $-C_2-C_1$. This in particular implies the same bound on the boundary $S_\epsilon T^*_{x_0}\R^n \subset F_R \cap T^*D^n$. As $F_R-D_\epsilon^*L_\infty$ is connected and the isotopy from Lemma~\ref{lem:Action:5} keeps $K'$ constantly equal to $F_R$ outside $D_\epsilon^*L_\infty$ the primitive can only change by adding a constant. Therefore, the bound $C_1$ on the variation of the primitive $f_{F_\infty}$ on $F_R\cap T^*D^n$ is still valid for the primitive on $K'$; hence we get that the primitive on the part of $K'$ outside $D_\epsilon^*L_\infty$ is bounded from above by $-C_2-C_1 + C_1 = -C_2$.

  The only thing in the proposition that $K'$ does not satisfy is that $K'\cap T^*_{x_0}\R^n$ is the union of $\{(x_0,0)\}$ and an annulus. However, as the green line in Figure~\ref{fig:flowresult} indicates, one may shear near this fiber and make the fiber completely miss this part of $K'$. To be explicit, we consider a neighborhood around $x_0$ in $L$ where the Riemannian structures agree. In Figure~\ref{fig:flowresult2},
\begin{figure}[ht]
  \centering
  \begin{tikzpicture}[scale=0.7]
    \draw[red, thick] (0,2) to[out=180,in=90] (-1,1) -- (-1,0.4) to[out=270,in=180] (0,0.1);
    \draw[red, thick] (-2,-2) to[out=0,in=270] (-1,-1) -- (-1,-0.4) to[out=90,in=0] (-2,-0.1);
    \draw[->] (-2,0) -- (0,0) node[below] {$x$};
    \draw[dashed, thin] (-1,-3) -- (-1,3);
    \draw (-1,-0.2) node {$x_0$};
  \end{tikzpicture}
  \caption{$K'$ close to $T_{x_0}^*\R^n$.}
  \label{fig:flowresult2}
\end{figure}
we see $K'$ close to the fiber $T_{x_0}^*\R^n$ for $n=1$. For $n>1$, the picture still describes all points in $K'$. Indeed, intersecting $K'$ with an affine complex line through $(x_0,0)$ parallel to a real unit vector $v\in S^{n-1} \subset \R^n$ yields the same local picture, independent of $v$. It follows that an arbitrarily small negative generalized rotation (around $(x_0,0)$) would clear the fiber for all points except $x_0$. However, we do not want to change the piece that equals $L$. So, we pick a Hamiltonian $H$ which equals $H(x,y)=-\norm{y}^2$ in a small neighborhood of the annulus, but has support in a slightly larger neighborhood (away from $L$). We define the final Lagrangian $K$ as the Hamiltonian time-1 flow of $\delta H$ for small $\delta>0$. For $\delta$ small enough, this does not change the strict bounds on the primitive and $K$ does not intersect the fiber except at $x_0\in L$.
\end{proof}


\section{Cutting and pasting generating functions and proof of Theorem~\ref{thm:1}} \label{sec:cutting-paste-gener}

In this section we perform the cut-and-paste argument on a generating function quadratic at infinity for the Lagrangian $K$ from Proposition~\ref{prop:Action:1} to obtain one for $L\subset T^*D^n$, thereby proving Theorem~\ref{thm:1} (in the case $n\geq 5$).

Recall that $F:\R^n \times \R^{2k} \to \R$ is called \emph{quadratic at both infinities} if $F=Q_k$ outside a compact set. Here
\begin{align*}
  Q_k(q,x,y)= Q_k(x,y) = - \norm{x}^2 + \norm{y}^2
\end{align*}
for $x,y\in \R^k$, which we call the standard quadratic form. In \cite{MR765426} Chaperon proved that the image of $\R^n$ under any compactly supported Hamiltonian isotopy has such a generating function. We use this and the previous constructions to prove the following lemma.

\begin{lemma} \label{lem:Theorem1:2}
  There is a generating function $F:D^n \times \R^{2k} \to \R$ quadratic at infinity and two fiberwise regular values $C'<C$ of $F$ such that
  \begin{itemize}
  \item $F$ generates the zero section in a neighborhood of $\{x_0\} \in D^n$.
  \item The restriction of $F$ to the set $\{F\leq C'\} \cup \{F \geq C\}$ generates $L\subset T^*D^n$. 
  \end{itemize}  
\end{lemma}

Note that the set in the second bullet point has complement given by
\begin{align*}
  \{C'<F<C\}.
\end{align*}
In the next section we will prove the case $n\leq 4$.

\begin{proof}[Proof of Lemma~\ref{lem:Theorem1:2} for $n\geq 5$]
  Proposition~\ref{prop:Action:1} and Chaperon's result imply that there is a generating function $F':\R^n\times \R^{2k} \to \R$ quadratic at both infinities for the Lagrangian $K$ described in the proposition.
  
  Now, the restriction
  \begin{align*}
    F = F'_{\mid D^n\times \R^{2k}} : D^n \times \R^{2k} \to \R
  \end{align*}
  generates $K\cap T^*D^n$, which by Proposition~\ref{prop:Action:1} has two pieces:
  \begin{align*}
    K\cap T^*D^n = L \sqcup K_1
  \end{align*}
  and in particular has $K \cap T^*_{x_0}\R^n = \{(x_0,0)\}$. 
  
  Let $C$ be a value as in Proposition~\ref{prop:Action:1} that separates the primitive values on these two pieces for the specific primitive induced by $F$. This primitive gives the fiberwise critical value of $F$ at each fiberwise critical point generating a point of $K$. Using that $F=Q_k$ outside a compact set, we may pick another constant $C' < C$ such that the fiberwise critical values of $F$ are all bounded from below by $C'$. This implies that the restriction
  \begin{align*}
    F_{\mid} : \{F\leq C'\} \cup \{F \geq C\} \to \R
  \end{align*}
  is a function that generates only $L$ and not $K_1$.  
\end{proof}

Assuming Lemma~\ref{lem:Theorem1:2} for all $n$, we now prove Theorem~\ref{thm:1}.

\begin{proof}[Proof of Theorem~\ref{thm:1}]
  Let $F_{\mid}$ be the restriction of $F$ provided by Lemma~\ref{lem:Theorem1:2} to the subset $\{F\leq C'\} \cup \{F \geq C\}$. We wish to extend this $F_\mid$ to all of $D^n\times \R^{2k}$ without generating anything beyond $L$.

  Consider the closure of the \emph{missing piece}:
  \begin{align*}
    M = \{C' \leq F \leq C\}
  \end{align*}
  Since $C$ and $C'$ are fiberwise regular values, this defines a fiber bundle (whose fibers have boundary). Let $K \subset D^n \times \R^{2k}$ be compact such that $F=Q_k$ outside $K$. Picking a large ball $D_R^{2k} \subset \R^{2k}$ containing the projection of $K$ to $\R^{2k}$ in its interior, we see that $M\cap (D^n\times (\R^{2k}-D_R^{2k}))$ is a product. Since $D^n$ is contractible, we can trivialize $M$ so that it respects the product structure outside $D^n\times D_R^{2k}$. In fact, we will be slightly more specific about this. Pick a horizontal distribution (a fiber bundle connection) for $D^n \times \R^{2k} \to D^n$ which satisfies:
  \begin{itemize}
  \item It is tangent to all regular level sets $\{F=a\}$ for $a$ close to $C$ or $C'$.
  \item Outside $D^n \times D_R^{2k}$ it is the trivial horizontal distribution.
  \end{itemize}
  This restricts to $M$ by construction, and using parallel transport (lifting smooth curves) we can trivialize the bundle.
  
  The chosen horizontal distribution makes it possible to fill in the missing piece without creating fiberwise critical points if we can do so in a single fiber. The function $F_{\mid \{x_0\} \times \R^{2k}}$ provides such an extension in the fiber over $x_0$. As the horizontal distribution is tangent to the level sets of $F$ close to the ``seam'' $\partial M$, the pasted function is smooth. Furthermore, the fact that the trivialization is induced by the standard product structure outside a compact set makes the resulting function equal to $Q_k$ outside this compact set (since this was already true in the fiber over $x_0$). This yields the desired generating function, proving Theorem~\ref{thm:1}.  
\end{proof}

\section{Extending to $n\leq 4$}\label{sec:extending-nleq-4}

\newcommand{\nn}{9}
\newcommand{\nnm}{8}

In this section, we describe how to modify Sections~\ref{sec:an-infin-dimens} and~\ref{sec:cutting-paste-gener} for the case $n\leq 4$. The difficulty is that the standard Morse-theoretic arguments do not work in low dimensions.

Instead of $L_\infty$, we ``stabilize'' and consider the Lagrangian $L'_\infty=L_\infty \times \R^{\nn-n} \subset \C^{\nn}$ which equals $\R^{\nn}$ outside $T^*(D^n\times \R^{\nn-n})$. Similarly, we let $L'=L\times \R^{\nn-n}$, and as before, we let $L_\delta=L\cap T^*D_\delta^n$ be a slightly smaller version of $L$ and also define its stabilization $L_\delta'=L_\delta\times \R^{\nn-n}$. The spaces $L'$ and $L'_\delta$ are not compact, but as we only care about generating the compact $L$ over $D^{n}\times \{0\}$, this will not pose any serious problems. 
 
The stabilization $L'_\infty$ is still contractible, and the first main point is to find a flat Riemannian structure on $L'_\infty$ and generalize Lemmas~\ref{lem:Action:4} and~\ref{lem:Action:1} to obtain the following lemma. We still denote $x_0=(-1,0,\dots,0)\in L \times D^{\nn-n}$.

\begin{lemma} \label{lem:Action:1b}
  There is a diffeomorphism $\varphi : L'_\infty \cong \R^{\nn}$ given by $x \mapsto x-x_0$ in a small neighborhood of $x_0$, and there is a symplectic embedding
  \begin{align*}
    D^*_\epsilon L_\infty \subset D^*_{\tfrac12}\R^{n}  
  \end{align*}
  satisfying the properties listed in Lemma~\ref{lem:Action:1}. Here, by abuse of notation, we identify $L_\infty$ with $L_\infty \times \{0\} \subset L_\infty'$, and we equip it with the restricted Riemannian structure induced by $\varphi$. Furthermore, there is an $\epsilon'<\epsilon$ small enough so that
  \begin{align*}
    D^*_{\epsilon'} L_\infty' \subset D^*_\epsilon L_\infty \times D^*_{\tfrac12} \R^{\nn-n} \subset D^*\R^\nn.
  \end{align*}
\end{lemma}
We will need the following lemma to prove this. Recall that $L$ agrees with the zero section over a neighborhood of $\partial D^{n}$.

\begin{lemma} \label{cor:Theorem1:1}
  There exists a diffeomorphism
  \begin{align*}
    \psi : L \times D^{\nn-n} \cong D^n\times D^{\nn-n}
  \end{align*}
  as manifolds with corners, which is the identity near the boundary part $\partial L\times D^{\nn-n}$ and of the form
  \begin{align} \label{eq:Theorem1:1}
    \psi(l,x) = (\psi_1(l,\hat x),\norm{x}\psi_2(l,\hat x))
  \end{align}
  near the other boundary part $L\times S^{\nnm-n} \cong D^n\times S^{\nnm-n}$.
\end{lemma}

\begin{proof}
  Consider a homotopy equivalence relative to the boundary $\varphi:L \to D^n$ which is the identity near $\partial L = S^{n-1}$. Since $2\dim(L)<9$, we may lift this map to a smooth embedding $L \to D^n \times D^{\nn-n}$, which agrees with the standard embedding of $D^{n}$ in a neighborhood of $S^{n-1}\times D^{\nn-n}$. The normal bundle of this embedding is trivial over the boundary $S^{n-1}$ by identifying its disk with $D^{\nn-n}$ and we claim that this trivialization extends over all of $L$. Indeed, in the cases $n=1$ and $n=2$, this follows as $L$ is $D^n$ and the homotopy equivalence is homotopic relative to the boundary to the identity, and in the case $n=3$, it follows from $\pi_3(BO(6))=0$. In the case $n=4$, it follows since the Hirzebruch signature theorem (used after attaching a disk to get a 4-sphere) shows that the tangent bundle of $L$ relative to the boundary is stably trivial. 

  It follows that we have an embedding $L\times D^{\nn-n}_r \to D^n\times D^{\nn-n}$ for small $r>0$ which is the standard inclusion over a neighborhood of the boundary of $L$. The closure of the complement is thus (after possibly making $r$ smaller) an $h$-cobordism relative to the boundary from $L\times S_r^{\nnm-n}$ to $D^n\times S^{\nnm-n}$, which by the $h$-cobordism theorem can be trivialized relative to the boundary. That is, we identify it with the closure of $L \times (D^{\nn-n}-D^{\nn-n}_r)$. This can be pasted together with the inclusion to get the desired diffeomorphism.

  The last statement follows, since we may assume that the trivialization of the $h$-cobordism respects the radial collar close to $D^n\times S^{\nnm-n}$.
\end{proof}

\begin{proof}[Proof of Lemma~\ref{lem:Action:1b}]

Let $\psi$ be a diffeomorphism as in Lemma~\ref{cor:Theorem1:1} above. As it satisfies Equation~(\ref{eq:Theorem1:1}) close to $L\times S^{\nnm-n}$ we may radially extend it to a diffeomorphism
\begin{align*}
  \overline{\psi} : L \times \R^{\nn-n} \cong D^n \times \R^{\nn-n}
\end{align*}
which is the identity in a neighborhood of $S^{n-1}\times \R^{\nn-n}$. This has globally bounded derivatives, as it is radial in the second factor. We may further extend this by the identity to a diffeomorphism $\Psi : L_\infty \times \R^{\nn-n} \cong \R^{\nn}$. Changing this diffeomorphism inside $D_2^n \times D^{\nn-n}$, we may assume that in a neighborhood of $x_0$ it maps $x \mapsto x-x_0$.

The existence of $\epsilon>0$ and an embedding $D_\epsilon^*L_\infty \to T^*\R^n$ is proved exactly as in Lemma~\ref{lem:Action:1} with the following small exception. The diffeomorphism $\Psi$ is actually the identity outside $L$, so the induced Riemannian structure on $L_\infty$ is in this case the trivial one. The global bound on the derivative of $\Psi$ makes it possible to pick $\epsilon'>0$ as prescribed in the lemma.
\end{proof}

We will again employ Chaperon's construction of a generating function over $\R^\nn$. However, as we will then restrict this to $D^n \times \{0\}$, it is important to identify precisely what such restrictions generate. This is given by a symplectic reduction. Indeed, for a generating function $F:\R^\nn \times \R^{2k} \to \R$ generating a Lagrangian $K \subset T^*\R^\nn$, the restriction generates the image of
\begin{align*}
  K \cap (T^*\R^\nn)_{\mid \R^n \times \{0\}} \to T^*\R^n.
\end{align*}
Here we are restricting the entire $\nn$-dimensional cotangent bundle over $\R^n$ and then orthogonally projecting to the subbundle. Note that, even if $K$ is embedded in $T^*\R^\nn$, the image might not be embedded in $T^*\R^n$. The condition that the restriction cuts out its singular set transversely is equivalent to the intersection above being transverse.

To finish the argument, we thus need the following slight modification of Lemma~\ref{lem:Action:5}. We are now using $(q,p)$ coordinates on $T^*L_\infty'$ induced by a diffeomorphism to $\R^\nn$ as in Lemma~\ref{lem:Action:1b} above.

\begin{lemma} \label{lem:Action:5b}
  Let $S_c$ be the result of the Hamiltonian flow of the Hamiltonian from Equation~\eqref{eq:3} applied to the fiber $T^*_{x_0}L_\infty' \subset T^*L_\infty'$ and let $p_c:S_c \to \R$ be the unique primitive of the Liouville form $\lambda_{L_\infty'}$ in $T^*L_\infty'$ whose value is $0$ at $(q,p)=(0,0)$. For any $C>0$ there exists a $c>0$ large enough such that
  \begin{align*}
    S_c\cap (T^*L')_{\mid L} = L \sqcup K_1
  \end{align*}
  with primitive $p_c=0$ on $L$ and $\max_{K_1}p_c < -C$. Furthermore, the intersection is transverse along $L$.
\end{lemma}

\begin{proof}
  The proof is exactly the same, except that we apply the generalized rotation within $D_{\epsilon'}^*L_\infty'$ from Lemma~\ref{lem:Action:1b}. Furthermore, even though $L'$ is non-compact, we only need $L$ to be compact to conclude all statements in the lemma except the last. $L$ is cut out transversely since the flow of the fiber gives the zero section in a neighborhood of $L$.  
\end{proof}

The version of Proposition~\ref{prop:Action:1} that we will need is the following.

\begin{proposition} \label{prop:Action:1b}
  There exists a compactly supported Hamiltonian isotopy of the zero section $\R^\nn \subset \C^\nn$ to a Lagrangian $K$ such that
  \begin{align*}
    K\cap (T^*\R^\nn)_{\mid D^n\times \{0\}} = L \sqcup K_1 \qquad \textrm{and} \qquad K\cap (T^*_{x_0}\R^{\nn}) = \{(x_0,0)\}
  \end{align*}
  and such that for any primitive $p_K: K \to \R$ of the Liouville form there is a $C\in \R$ such that
  \begin{align*}
    \min_{z \in L} p_K(z) > C > \max_{z \in K_1} p_K(z).
  \end{align*}
\end{proposition}

\begin{proof}
  The proof is the same as that of Proposition~\ref{prop:Action:1}, except that we use the lemmas above, and we only care about ensuring that $K$ has the desired form and obtaining the bounds over the compact sets $L\times \{0\}$ and $D^n\times \{0\}$.
\end{proof}

\begin{proof}[Proof of Lemma~\ref{lem:Theorem1:2} in the case $n\leq 4$]
  The proof is essentially the same, except that we use the proposition above instead of Proposition~\ref{prop:Action:1} and restrict the generating function to $D^n\times \{0\}$. Note that the projection of $K_1$ to $T^*D^n$ need not be embedded, but the constructed generating function only generates $L$, which is embedded.
\end{proof}


\section{The Space $\M_\infty$ and the Lagrangian Gauss map} \label{sec:lagrangian-gauss-map}

In this section we define the spaces $\M_k$ whose colimit we denote $\M_\infty$. The space $\M_k$ consists of the possible fibers of generating functions quadratic at infinity that generate the zero section with primitive value 0. This means that, given such a generating function $F:N \times \R^{2k} \to \R$, the adjoint defines a map
\begin{align*}
  N \to \M_k,
\end{align*}
which we will make precise in this section. This is of interest since the generating function from Theorem~\ref{thm:1} generates the zero section over each point of the boundary $S^{n-1}$, and therefore the adjoint over $S^{n-1}$ defines a map
\begin{align*}
  S^{n-1} \to \M_k.
\end{align*}
We also consider the canonical map
\begin{align*}
  N_\infty : \M_\infty \to BO \subset \Z \times BO
\end{align*}
given by taking the negative eigenspace of the Hessian at the critical point. The composition $S^{n-1} \to \M_k \subset \M_\infty \to \Z\times BO$ thus defines an element in $\pi_{n-1}(\Z\times BO)$. The stable Lagrangian Gauss map $L \to U/O$ maps $S^{n-1}$ to the basepoint, hence using $(L,S^{n-1}) \simeq (D^n,S^{n-1})$ we get an element in $\pi_n(U/O)$. In this section we prove that these two elements agree under the standard Bott-periodicity map $\pi_n(U/O) \cong \pi_{n-1}(\Z\times BO)$.

Let $Q_k^l : \R^k \times \R^l \to \R$ be the generalization of $Q_k=Q_k^k$ given by
\begin{align} \label{eq:Gauss:3}
  Q_k^l(x,y) = -\norm{x}^2 + \norm{y}^2
\end{align}
for $x\in \R^k$ and $y \in \R^l$. To make things as explicit as possible, we denote the coordinates in $\R^k \times \R^l$ by
\begin{align*}
  z = (x_k,x_{k-1},\dots,x_1,y_1,y_2,\dots,y_l).
\end{align*}

Let $\cC_k^l$ be the set of smooth maps $f:\R^{k+l}\to \R$ such that
\begin{align*}
  \norm{f-Q_k^l}_{C^1} = \max(\norm{f-Q_k^l}_{L^\infty},\norm{\nabla f-\nabla Q_k^l}_{L^\infty}) < \infty.
\end{align*}
However, as this $C^1$ norm is not continuous in the $C^\infty$ topology, we topologize $\cC_k^l$ by identifying it with the subspace of pairs
\begin{align*}
  (f,\norm{f-Q_k^l}_{C^1}) \in C^{\infty}(\R^{k+l},\R) \times \R.
\end{align*}
We need this modified topology to properly control the behavior at infinity. We will also often need this bound explicitly, so we define
\begin{align} \label{eq:Gauss:1}
  c_f = \norm{f-Q_k^l}_{C^1} + 1.
\end{align}
To understand the homotopy types of the mapping spaces, the following lemma is convenient.

\begin{lemma} \label{lem:Gauss:2}
  Given any compact set $K\subset \R^{k+l}$, the subspace of $\cC_k^l$ consisting of functions $f$ for which the support of $f-Q_k^l$ is contained in $K$ has the $C^\infty$ topology. 
\end{lemma}

\begin{proof}
  The map $C^\infty(\R^{k+l},\R) \to \R$ given by taking the $C^1$ norm defined above is continuous on the subspace of functions with support in $K$.
\end{proof}

Note, however, that the subspace of all functions with arbitrary compact support does not have the $C^\infty$ topology. For that reason, we will quite often use the homotopy from the identity on $\cC_k^l$ that bumps off the function outside a compact set to make it equal to $Q_k^l$ at infinity. Indeed, pick a function $\varphi : \R \to [0,1]$ such that $\varphi(t) = 0$ for $t\leq 0$, $\varphi(t)=1$ for $t\geq 2$ and $\varphi'-\varphi \leq \sqrt2 -1$. We use this to define the homotopy $B_t : \cC_k^l \to \cC_k^l$ from the identity to functions where $f-Q_k^l$ has compact support. We let
\begin{align} \label{eq:Gauss:2}
  B_t(f)(z) = (1-t\varphi(\norm{z}-c_f))f(z) + t\varphi(\norm{z}-c_f)Q_k^l(z).
\end{align}

\begin{lemma} \label{lem:Gauss:1}
  The homotopy $B_t : \cC_k^l \to \cC_k^l$ is well-defined, and for each $t$ the functions $B_t(f)$ and $f$ are equal near their critical points. Furthermore, $c_{B_t(f)} \leq \sqrt{2}c_f$ for each $t$.
\end{lemma}

\begin{proof}
  As the topology we impose on $\cC_k^l$ makes taking the norm $\norm{f-Q_k^l}_{C^1}$ continuous, Equation~(\ref{eq:Gauss:2}) defines a continuous map $\cC_k^l \to C^\infty(\R^{k+l},\R)$. Since $\norm{B_t(f)-Q_k^l}_{C^1}$ is continuous in $t$ and $f$, it follows that $B_t$ is continuous.

  We have $|B_t(f)-Q_k^l| \leq |f-Q_k^l|$. The gradient of $B_t(f)$ satisfies
  \begin{align*}
    \norm{\nabla B_t(f) - \nabla Q_k^l} \leq (1-t\varphi)\norm{\nabla f - \nabla Q_k^l} + t\varphi'|f-Q_k^l| \leq \sqrt 2 \norm{f-Q_k^l}_{C^1}.
  \end{align*}
  As $\norm{\nabla Q_k^l} = 2\norm{z}$ the gradient bound also shows that all critical points of $B_t(f)$ must be inside $D_{c_f}^{k+l}$ where it equals $f$.
\end{proof}

In this section, we will only need the following subspace.

\begin{defn} \label{def:Gauss:1}
  Let $\M_k^l \subset \cC_k^l$ be the subspace of functions $f$ such that
  \begin{itemize}
  \item $f$ has only one critical point.
  \item This critical point is non-degenerate and has critical value $0$.
  \end{itemize}
  We denote $\M_k=\M_k^k$.
\end{defn}

Note that the bound on the $C^1$-supremum norm means that the level sets at infinity are diffeomorphic to those of $Q_k^l$. This is even true for any $f\in \cC_k^l$. By standard Morse theory, the index of the unique critical point of $f\in \M_k^l$ must be $k$. We will only need $\M_k$ for the rest of this section.

The \emph{adjoint} of a map $F : X \times \R^{2k} \to \R$ is the map $F^{\Adj} : X \to \Map(\R^{2k},\R)$, which maps $x\in X$ to $F_{\mid \{x\}\times \R^{2k}}$. Any generating function $F:D^n \times \R^{2k} \to \R$ from Theorem~\ref{thm:1} has a single critical point over each fiber $x\in S^{n-1}$. The critical value may be nonzero, but it is constant as a function of $x$ on $S^{n-1}$. We claim that we may assume that this constant is $0$. Indeed, any $f\in \cC_k^l$ has $f+c \in \cC_k^l$ for any $c\in \R$. So, by adding a global constant to $F$ and bumping off fiberwise using Lemma~\ref{lem:Gauss:1} to again make the function equal to $Q_k$ at infinity, we obtain the claim. Note in particular that according to the lemma, the bumping off does not affect fiberwise critical points---hence does not change what Lagrangian the function generates. Using compactness of $S^{n-1}$ and the fact that the generating function equals $Q_k$ outside a compact set we thus get a map
\begin{align}\label{eq:4}
  F^{\Adj} : S^{n-1} \to \M_k.
\end{align}
It is well-known that admitting a generating function for a fixed $k$ is not invariant under Hamiltonian isotopy (this can be proved explicitly by locally rotating any Lagrangian sufficiently to obtain an arbitrarily large range of Maslov indices when intersected with a fiber). So, we replace these with their stable versions by defining stabilization maps $s_k:\M_k \to \M_{k+1}$ given by adding the quadratic form (corresponding to the standard one) in the new variables. That is,
\begin{align*}
  s_k(f)(x_{k+1},x,y,y_{k+1}) = -x_{k+1}^2 + f(x,y) + y_{k+1}^2.  
\end{align*}
This is often not equal to $Q_{k+1}$ at infinity, even if $f$ is. However, it does satisfy the needed bound to land in $\M_{k+1}$. Alternatively, one could have used Lemma~\ref{lem:Gauss:1} to bump off, thereby achieving equality at infinity.

We define
\begin{align*}
  \M_\infty = \colim_{k\to \infty} \M_k
\end{align*}
using these $s_k$. There are maps
\begin{align*}
  N_k : \M_k \to \Gr_k(\R^{2k})
\end{align*}
given by taking the negative eigenspace of the Hessian at the critical points. This map strictly commutes with stabilization maps if we use the convention that the stabilization map $\Gr_k(\R^{2k}) \to \Gr_{k+1}(\R^{2k+2})$ takes direct sum with $\R \subset \R^2$ by adding coordinates $x_{k+1}$ and $y_{k+1}$ such that $x_{k+1}$ is in the subspace. In the colimit $k\to \infty$ we get a map
\begin{align*}
  N_\infty : \M_\infty \to BO = \{0\} \times BO \subset \Z\times BO.
\end{align*}
For the rest of this section, we allow $L \to T^*D^n$ to be any \emph{immersed} Lagrangian disk that equals the zero section in a neighborhood of $S^{n-1} \subset T^*D^n$. This has a Lagrangian Gauss map $L \to U(n)/O(n)$. Indeed, the tangent space at each point is a Lagrangian in $\C^n$ and hence defines a point in $U(n)/O(n)$. On the boundary $S^{n-1}$ this is the standard $\R^n \in U(n)/O(n)$, which we use as the basepoint. Since $L/S^{n-1} \simeq S^n$ we thus get an element in $\pi_n(U(n)/O(n))$. Stabilizing using
\begin{align*}
  U/O = \colim_{n\to \infty} U(n)/O(n)
\end{align*}
we get an element in $\pi_n(U/O)$. By Bott-periodicity (see e.g. \cite{MR0163331}, which we partly recall in the proof below) we have that $\Omega(U/O) \simeq \Z\times BO$ and thus
\begin{align*}
  \pi_{n-1}(\Z\times BO) \cong \pi_n(U/O).
\end{align*}

\begin{proposition} \label{prop:Gauss:1}
  Let $S^{n-1} \to \M_k$ be the adjoint of the restriction to the boundary of a generating function $F:D^n\times \R^{2k} \to \R$ quadratic at infinity. Assume that it generates an immersed Lagrangian disk $L \to T^*D^n$ which agrees with the zero section in a neighborhood of $S^{n-1} \subset T^*D^n$. The composition
  \begin{align*}
    S^{n-1} \to \M_k \to \M_\infty \xrightarrow{N_\infty} \Z \times BO
  \end{align*}
  represents the class defined above in $\pi_n(U/O)$ under the isomorphism induced by Bott-periodicity.
\end{proposition}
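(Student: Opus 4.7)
The plan is to compute both classes at the level of explicit representatives and identify them via a hands-on model of the Bott isomorphism.

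First, I would unpack the Lagrangian Gauss map in terms of $F$. At a fiber-critical point $z=(x,v(x))\in\Sigma_F$ generating $p(x)\in L$, transversality (G1) forces the vertical Hessian block $H_{vv}$ of $F$ at $z$ to be invertible, so $\Sigma_F$ is locally the graph of $v(\cdot)$ and $L$ is locally the graph of $dh$ for $h(x):=F(x,v(x))$. Implicit differentiation yields
\[
\mathrm{Hess}(h)(x) \;=\; S(x) \;:=\; H_{xx} - H_{xv}H_{vv}^{-1}H_{vx},
\]
so that $T_{p(x)}L\subset\C^n$ is the Lagrangian graph $\{(u,S(x)u):u\in\R^n\}$. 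This equals $\R^n$ precisely where $S(x)=0$, in particular over $S^{n-1}$, so after stabilization the Gauss map $L/S^{n-1}\to U/O$ is represented by the family of Schur-complement Lagrangians.

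Second, I would use a hands-on model of Bott periodicity $\Omega(U/O)\simeq\Z\times BO$. A based family $(D^n,S^{n-1})\to(U/O,*)$ can, after stabilization, be represented by Lagrangians $\lambda_x=\mathrm{graph}(T_x)\subset\C^{n+k}$ for a family of symmetric operators $T_x$ on $\R^{n+k}$ equal to $0$ on the boundary. Its Bott image in $\pi_{n-1}(\Z\times BO)$ is then the virtual negative-eigenspace bundle of $T_x$ over $S^{n-1}$, after a generic perturbation making $T_x$ nondegenerate on the boundary. In our setting the Schur complement $S(x)$ of step one plays the role of $T_x$, so under this model the Gauss class is represented by the negative-eigenspace family $N(S(x))$.

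The main step is to match this with $N_\infty\circ F^{\Adj}$. For $x\in S^{n-1}$ the full Hessian $H(x)$ of $F_x$ on $\R^{2k}$ has signature $(k,k)$ and $N_k(F_x)$ is its negative eigenspace. Thus both classes are of the form ``family of negative eigenspaces of symmetric forms'', and the matching reduces to a stable linear-algebra identity: for a block symmetric form with invertible lower-right block $H_{vv}$, block symplectic/orthogonal reduction (i.e.\ the congruence diagonalizing the $H_{xv}/H_{vv}$ coupling) gives a canonical isotopy through signature-preserving forms from $H$ to the block diagonal $\mathrm{diag}(S,H_{vv})$. Applying this isotopy fiberwise over $x$ exhibits $N(H(x))$ as stably equivalent to $N(S(x))\oplus N(H_{vv}(x))$, and the second summand is a trivial bundle (its dimension is constant and the family $H_{vv}(x)$ is contractible to $-\mathrm{id}_k$ through invertible negatives as one deforms to the standard $Q$ at infinity).

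The main obstacle will be keeping track of stabilization and the integer component in $\Z\times BO$ consistently across the identification; I would handle this by induction on $k$, checking the base case $k=0$ directly and using for the inductive step that the stabilization map $s_k:\M_k\to\M_{k+1}$ adds the hyperbolic block $-x_{k+1}^2+y_{k+1}^2$ which contributes trivially both to $N_k$ (it adds one negative and one positive dimension to the Hessian) and to the Gauss map (it adds a trivial complex line to $\C^n$, stable in $U/O$).
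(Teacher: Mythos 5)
Your approach is genuinely different from the paper's (which traces Giroux's explicit null-homotopy of the stabilized Gauss map and identifies the resulting loop via Milnor's minimal-geodesic model of Bott periodicity), but it has gaps that I don't think can be patched without essentially reverting to that route.

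The most fundamental problem is in your Bott periodicity model. You represent the based family $(D^n,S^{n-1})\to (U/O,*)$ graphically as $\lambda_x=\mathrm{graph}(T_x)$ with $T_x=0$ on the boundary, and read off the class in $\pi_{n-1}(\Z\times BO)$ as a negative eigenbundle of a perturbation of $T_x$ over $S^{n-1}$. But graphical Lagrangians form a contractible subspace, so the linear contraction $s\mapsto sT_x$ is a \emph{free} null-homotopy of the family; the relative class in $\pi_n(U/O)$ need not vanish. It is precisely the loop in $U/O$ traced out over each $x\in S^{n-1}$ by such a free null-homotopy — not any pointwise eigenspace data of $T_x$ over the boundary — that represents the transgressed class in $\pi_{n-1}(\Omega(U/O))$. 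Indeed, the ``negative eigenbundle of a generic perturbation of $T_x$ over $S^{n-1}$'' is not even well-defined: adding $\epsilon I$ versus $-\epsilon I$ gives complementary bundles and hence different elements of $\Z\times BO$. The paper avoids all of this by taking Giroux's free null-homotopy (interpolating in the contractible fiber of $\Lambda(n,2k)\to\Lambda(n)$ between the standard stabilization and the Hessian stabilization) and restricting it to $S^{n-1}\times I$, where it becomes a loop at the basepoint, then identifying the loop-to-Grassmannian map with the one appearing in Milnor's minimal-geodesic proof of $\Omega(U/O)\simeq\Z\times BO$.

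A second, independent gap: your opening claim that transversality G1 forces the vertical block $H_{vv}$ to be invertible is false. G1 only says the $2k\times(n+2k)$ matrix $(H_{vx}\;H_{vv})$ has rank $2k$; $H_{vv}$ alone can be degenerate, and at such points $L\to D^n$ is not an immersion, $S(x)$ is undefined, and $T_{p(x)}L$ is not a graph over $\R^n$. So the Schur-complement description of the Gauss map is only local on the open set of graphical points, which typically does not cover the whole of $D^n$. Finally, notice that the combination of your last two claims — that $N(H(x))\cong N(S(x))\oplus N(H_{vv}(x))$ and that $N(H_{vv}(x))$ is a trivial bundle over $S^{n-1}$ — would force the map $S^{n-1}\to BO$ to be trivial, since over $S^{n-1}$ we have $S(x)=0$ and hence $N(H(x))\cong N(H_{vv}(x))$. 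That would make the proposition vacuous and in particular would make Theorem~\ref{thm:4} follow without B\"okstedt's input, which is not what the paper is claiming. The block-reduction idea $N(H)\simeq N(S)\oplus N(H_{vv})$ is a nice observation, but it identifies the \emph{full} Hessian's eigenbundle, which is neither the Gauss class nor the map $N_k\circ F^{\mathrm{Adj}}$; the two classes you want to compare live over different bases ($D^n$ versus $S^{n-1}$) and are only related by the transgression you are trying to sidestep.
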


\begin{proof}
  First, we note that the map $S^{n-1} \to \M_\infty$ has not been constructed as a based map. We could have done this, but the construction would have been more intricate, and at this point it makes no difference since the action of $\pi_1(BO)$ on higher homotopy groups is trivial because $BO$ is an infinite loop space. The unbased nature of the map could also make it slightly ambiguous as to which component of $\Z\times BO$ we land in. However, by definition $N_\infty$ lands in $\{0\}\times BO$ (only relevant for $n=1$). It is thus enough to prove that the two maps from $S^{n-1}$ to $BO$ are freely homotopic.

  In \cite{MR948769} Giroux proves that if a Lagrangian has a generating function then the Gauss map is null-homotopic, which is unsurprising here, since $L$ is contractible. However, using his explicit null-homotopy we will see the needed identification. Indeed, consider $L \simeq D^n$ mapping into $U/O$ and sending the boundary $S^{n-1}$ to the basepoint. Giroux's null-homotopy is a free null-homotopy of the map from this disk. Tracing what happens at the boundary we get a map:
  \begin{align*}
    S^{n-1} \times I \to U/O
  \end{align*}
  which sends $S^{n-1} \times \{0,1\}$ to the basepoint. The adjoint therefore represents the free homotopy class of the adjoint $S^{n-1} \to \Omega(U/O)$ associated to the original stable Gauss map $L/S^{n-1} \simeq S^n \to U/O$.

  We now recall Giroux's proof and elaborate on some parts to also identify this with the negative eigen-bundle map of the generating function under Bott periodicity. His argument goes as follows: Let $\Lambda(n,2k)$ be the linear Lagrangians in $\C^{n+2k}$ that transversely intersect $H=\C^n\oplus \R^{2k}$. Such Lagrangians symplectically reduce to $\C^n$ by transversely intersecting with $H$ and projecting to $\C^n$. The reduction map
  \begin{align*}
    r : \Lambda(n,2k) \to \Lambda(n)=U(n)/O(n)
  \end{align*}
  is a fiber bundle with contractible fibers. We define the standard stabilization of the Gauss map as adding Lagrangian factors given by the differential of the function $\eh \norm{x}^2+\eh \norm{y}^2$ with $(x,y)\in \R^{2k}$ in $T^*\R^{2k}=\C^{2k}$; this adds the Lagrangian factor $(1+i)\R^{2k}$. When the tangent space at a point $l\in L$ is also a graph (in $T^*\R^n$) the stabilized linear Lagrangian at that point is the graph of a block matrix:
  \begin{align*}
    \begin{pmatrix}
      A & 0 \\
      0 & I
    \end{pmatrix},
  \end{align*}
  where the two diagonal blocks are of size $n\times n$ and $2k\times 2k$ respectively. In particular, for $l$ close to $\partial L=S^{n-1}$ the first block $A$ is $0$.

  Sometimes it is more natural to define these stabilizations by adding the factor $\R^{2k}$ (corresponding to the second block in the matrix also being 0) or $i\R^{2k}$ (which is not a section). However, since we want the result both to be graphical like this over $\R^{2k}$ and to lie in $\Lambda(n,2k)$, we need it to be transverse to both of these.

  The fact that $r$ has contractible fibers means that any two sections in $\Lambda(n,2k)$ are homotopic. In particular, the stabilization is a section, and the differential of the generating function $F:D^n \times \R^{2k} \to \R$ also defines a section; hence there is a homotopy between these two. Giroux's final observation is that since all the Lagrangians in $dF$ are transverse to $i\R^{n+2k}$ ($dF$ is graphical), its Gauss map is null-homotopic.

  Unwinding the construction of this homotopy locally for a point in $S^{n-1} \subset L$ where the tangent space initially agreed with the zero section we get the following. The stabilization from the generating function is equivalent to adding the differential of another non-degenerate quadratic form. In fact, it adds the Hessian of $F$ which varies with the point in $S^{n-1}$ and which we assume, without loss of generality, to have only eigenvalues $\pm 1$. At these points, the Hessian of $F$ is in block form:
  \begin{align*}
    \begin{pmatrix}
      0 & 0 \\
      0 & B
    \end{pmatrix},
  \end{align*}
  where the diagonal blocks again have size $n\times n$ and $2k\times 2k$ respectively and $B$ is a non-degenerate symmetric matrix with eigenvalues $\pm 1$. As both this $B$ and $I$, defining the two sections, are non-degenerate, we may view the lower $2k\times 2k$ part as defining graphs over $i\R^{2k}$ (taking values in $\R^{2k}$). Hence the homotopy between the two sections in $\Lambda(n,2k)$ can simply be described by a convex interpolation of these graphs (keeping the 0 constant in the first $n$ factors). This interpolation may pass through $i\R^{2k}$ and hence at times it will not be graphical over the base $\R^{2k}$. In fact, its intersection with $i\R^{2k}$ happens exactly halfway through the interpolation and is precisely $i$ times the negative eigen-bundle of the Hessian $B$. This is exactly how the space of minimal geodesics in the corresponding component of $\Omega (U(2k)/O(2k))$ mapping to $\Gr_k(\R^{2k})$ is identified in the proof of Bott-periodicity in \cite{MR0163331}.

  One might object that the choice of initial stabilization seems important. After all, we could have used the differential of another quadratic form (e.g. $-\eh \norm{x}^2-\eh \norm{y}^2$) and obtained a different result. This will indeed produce other null-homotopies of the map $D^n \to U/O$. However, as $D^n$ is contractible, such null-homotopies only differ (up to homotopy) by the action of $\pi_1$, and this is precisely the ambiguity of which component of $\Z\times BO$ we land in, which we addressed at the beginning of the proof.
\end{proof}


\section{The Fibration Sequence} \label{sec:flabby-map}

In this section we define and prove the unstable version
\begin{align*}
  \M_k^l \to \U_k^l \to \H_k^l  
\end{align*}
of the fibration sequence from Theorem~\ref{thm:3}. We define stabilization maps between these sequences that increase $k$ and $l$. We finally prove that the fibration structures are compatible with these stabilization maps, resulting in the colimit version of the sequence:
\begin{align*}
  \M_\infty \to \U_\infty \to \H_\infty
\end{align*}
In the final sections, we identify the space $\U_\infty$ as $F/O$, the space $\H_\infty$ as the stable $h$-cobordism space of a point, and the map $\U_\infty \to \H_\infty$ as the Hatcher--Waldhausen map.

With $\cC_k^l$ as in Section~\ref{sec:lagrangian-gauss-map}, let $(\cC_k^l)^1 \subset \cC_k^l$ be the subspace of functions that have 1 as a regular value. We define
\begin{itemize}
\item $\H_k^l \subset (\cC_k^l)^1$ as the connected component containing $Q_k^l$.
\item $\U_k^l \subset \H_k^l$ as the subspace of functions where there is only one critical point with value below $1$. We further require that it has critical value $0$ and is non-degenerate.
\end{itemize}
Again, by considering level sets, the unique critical point of any $f \in \U_k^l$ must have Morse index $k$.

For $k+l\geq 6$, the space $\H_k^l$ is homotopy equivalent to the $h$-cobordism space denoted $\H(S^{l-1}\times D^k)$ (see Section~\ref{sec:Hatcher-Wald-map}).

We define the \emph{shrinking} (\emph{expansion}) $b\se f$ of a function $f\in \cC_k^l$ by a constant $b\in (0,1]$ ($b\geq 1$) by the formula
\begin{align} \label{eq:Tubes:1}
  (b \se f)(z) = b^2f(b^{-1}z).
\end{align}
Note that if $f=Q_k^l$ on some set $A$ then $b\se f=Q_k^l$ on $bA$. Furthermore, we have
\begin{align}
  \label{eq:Tubes:2}
  \norm{b\se f - Q_k^l}_{C^1} \leq \max(b,b^2)\norm{f-Q_k^l}_{C^1}
\end{align}
and the critical values of $b\se f$ are those of $f$ scaled by $b^2$.

\begin{lemma} \label{lem:Tubes:1}
  The space $\M_k^l$ from Definition~\ref{def:Gauss:1} is a subspace of $\U_k^l$.
\end{lemma}

\begin{proof}
  The only condition we need to check is that it actually lies in the connected component of $(\cC_k^l)^1$ containing $Q_k^l$. We prove this by taking any $f\in \M_k^l$ and continuously shrinking $f$. Indeed, $b\se f$ for $b\in [\epsilon,1]$ defines a path from $\epsilon \se f$ to $f$. For small $\epsilon$ we have $\norm{\epsilon\se f-Q_k^l}_{C^1}$ as small as needed. We may make it small enough so that the convex interpolation $(1-t)(\epsilon\se  f) + tQ_k^l$ has 1 as a regular value.
\end{proof}

We define the subspace $K\cC_k^l \subset \cC_k^l$ as those functions $f\in \cC_k^l$ where $f-Q_k^l$ has compact support. We define
\begin{align}
  \label{eq:Tubes:1k}
  K\M_k^l \subset \M_k^l, \quad    K\U_k^l \subset \U_k^l, \quad   K\H_k^l \subset \H_k^l.
\end{align}
using the same condition. We similarly define the subspace $D_b\cC_k^l \subset \cC_k^l$ as those functions $f\in \cC_k^l$ where the support of $f-Q_k^l$ is contained in $D_b^{k+l}$. We define
\begin{align}
  \label{eq:Tubes:1b}
  D_b\M_k^l \subset \M_k^l, \quad    D_b\U_k^l \subset \U_k^l, \quad   D_b\H_k^l \subset \H_k^l.
\end{align}
using the same condition. Note that $K\cC_k^l = \cup_b D_b\cC_k^l$.

\begin{lemma} \label{lem:Tubes:2}
  The inclusions in Equation~(\ref{eq:Tubes:1k}) are homotopy equivalences.
\end{lemma}

\begin{proof}
  This follows from Lemma~\ref{lem:Gauss:1} and the fact that all the spaces are defined by conditions on critical points (and critical values).
\end{proof}

Consider the subspace $(\H_k^l)^{\geq 1} \subset \H_k^l$ defined by those $f$ where all values above $1$ are also regular. We define the homotopy
\begin{align} \label{eq:Tubes:3}
  H_t(f) =
  \begin{cases}
    \pare*{1-t + t\pare*{\tfrac{1}{c_f^2+9}}} \se f & t \in [0,1] \\
    (2-t)\pare*{\pare*{\tfrac{1}{c_f^2+9}}\se f} + (t-1)Q_k^l & t\in [1,2]
  \end{cases}
\end{align}
where again $c_f=\norm{f-Q_k^l}_{C^1}+1$.

\begin{lemma}
  The homotopy $H_t$ defines a homotopy from the identity on $(\H_k^l)^{\geq 1}$ to the constant map at $Q_k^l$. Hence $(\H_k^l)^{\geq 1}$ is contractible.
\end{lemma}

\begin{proof}
  The shrinking scales the critical values by a factor less than 1, so $H_t(f)$ has all values in $[1,\infty)$ as regular values for $t\in [0,1]$. The bound (by Equation~(\ref{eq:Tubes:2}))
  \begin{align*}
    \norm*{\pare*{\tfrac{1}{c_f^2+9}}\se f - Q_k^l}_{C^1} \leq \frac{c_f}{c_f^2+9} \leq \tfrac19
  \end{align*}
  shows that all values in $[1,\infty)$ are regular for $H_t(f)$ with $t\in [1,2]$. Indeed, any point $z$ where $H_t(f)(z)=1$ has $Q_k^l\geq \tfrac89$ and the gradient norm of $Q_k^l$ at these points is much larger than $\tfrac19$.
\end{proof}

The composition $\M_k^l \to \U_k^l \to \H_k^l$ lands in $(\H_k^l)^{\geq 1}$ and is therefore null-homotopic by the above lemma.

\begin{lemma} \label{lem:oldprop:Tubes:1}
  The sequence
  \begin{align*}
    \M_k^l \subset \U_k^l \subset \H_k^l
  \end{align*}
  with the null-homotopy above is a homotopy fibration sequence.
\end{lemma}

\begin{proof}
  Fix a $b>1$ and let $\wH$ be the space of smooth codimension 1 submanifolds in $\R^{k+l}$ which agree with $\{Q_k^l=1\}$ outside $D_b^{k+l}$. We give $\wH$ the $C^\infty$ topology (using spaces of local sections in normal bundles as charts). We claim that the map $D_b\H_k^l \to \wH$ given by $f \mapsto \{f=1\}$ and the composition $D_b\U_k^l \to \wH$ through $\H_k^l$ are both fiber bundles. It follows from this that $D_b\H_k^l \to \wH$ is a homotopy equivalence, since its fibers are contractible.

  To prove the claim we let $M\in \wH$ be given. Note that $M=\{Q_k^l=1\}$ outside $D_b^{k+l}$ and is thus transverse to $S_b^{k+l-1}$. So we may pick a smooth tubular neighborhood $M\times [-2,2] \subset \R^{k+l}$ which over any point $x\in M \cap S_b^{k+l-1}$ is mapped into $S_b^{k+l-1}$. Using this we can identify an open neighborhood $U\subset \wH$ of $M$ with smooth maps $s: M \to (-1,1)$ with support in $M\cap D_b^{k+l}$ (sections in the normal bundle). Pick a map $\psi: [-2,2] \to [0,1]$ which is $1$ on $[-1,1]$ and $0$ close to $\{-2,2\}$. For each $s:M \to (-1,1)$ let $X_s$ be the vector field on $M\times [-2,2]$ defined by $\psi(t) s(x)\pd{}{t}$ at points $(x,t)$. This is equal to $s(x)\pd{}{t}$ in the interval between $s(x)$ and $0$---hence its time-1 flow, which we denote by $\varphi^s$, on $M\times [-2,2]$ takes the zero section $M$ to $s(M)$. As $\varphi^s$ is the identity outside $D_b^{k+l}$ it acts continuously on $D_b\U_k^l$ and $D_b\H_k^l$ (by Lemma~\ref{lem:Gauss:2}). We thus get trivializations of both bundles in the claim by composing functions over $s\in U$ from the right with $\varphi^s$.
  
  The fiber over the basepoint of the fiber bundle $D_b\U_k^l \to \wH$ consists of functions $f\in \U_k^l$ such that $\{f= 1\}=\{Q_k^l=1\}$. We claim that this deformation retracts onto the subspace $W \subset D_b\M_k^l$ defined by those $f\in D_b\M_k^l$ which agree with $Q_k^l$ on $\{Q_k^l\geq 1\}$. To prove this claim, we may first use an isotopy to make $f=Q_k^l$ at $\{Q_k^l=1\}$ to all orders (this uses that $\{f=1\}=\{Q_k^l=1\}$ and that $f$ has $1$ as a regular value). After this we may convexly interpolate on the set $\{Q_k^l\geq 1\}$ between $f$ and $Q_k^l$ (leaving $f$ as it is on $\{Q_k^l\leq 1\}$ where, by the definition of $\U_k^l$, it has only the one critical point).

  We also claim that $W \subset D_b\M_k^l$ is a homotopy equivalence. Indeed, shrinking with factor $b^{-1}$ defines a map back $D_b\M_k^l \to W$. The compositions in both directions are homotopic to the identity using the shrinking homotopies considered above.

  The final claim is that this is compatible with the null-homotopy $H_t$ given in Equation~(\ref{eq:Tubes:3}) above. Indeed, we first note that the homotopy preserves $D_b(\H_k^l)^{\geq 1} = D_b\H_k^l \cap (\H_k^l)^{\geq 1}$. Moreover, it even preserves the property $f=Q_k^l$ on $\{Q_k^l\geq 1\}$. It follows that the composition $W \to D_b\H_k^l \to \wH$ constantly maps to the basepoint in $\wH$ during this homotopy. This shows that the map from $W$ and hence $D_b\M_k^l$ to the homotopy fiber of $D_b\U_k^l \to \wH$ is a homotopy equivalence. Hence the induced map to the homotopy fiber of the $D_b\U_k^l \to D_b\H_k^l$ is also a homotopy equivalence. This last map to the larger homotopy fiber is compatible with unions for larger and larger $b$ and shows that
  \begin{align*}
    K\M_k^l \to K\U_k^l \to K\H_k^l 
  \end{align*}
  is a homotopy fibration sequence with this particular null-homotopy. As the inclusions of these spaces into the original sequence are homotopy equivalences (Lemma~\ref{lem:Tubes:2}) and the map to the homotopy fiber is again compatible with these inclusions, the result follows.
\end{proof}

We define stabilization maps
\begin{align*}
  s_- : \H_k^l \to \H_{k+1}^l \qquad \textrm{and} \qquad s_+ : \H_k^l \to \H_{k}^{l+1}
\end{align*}
by
\begin{align} \label{eq:9}
  s_-(f)(x_{k+1},x,y) = & - x_{k+1}^2 + f(x,y) \\
  s_+(f)(x,y,y_{l+1}) = & f(x,y) + y_{l+1}^2.
\end{align}
Note that $s_- \circ s_+ = s_+ \circ s_-$.

The function $s_-(f)$ only has critical points of the type $(0,x,y)$ where $(x,y)$ is critical for $f$. Similarly for $s_+(f)$. This shows that $s_\pm$ preserves the three types of subspaces $\U_k^l$, $\M_k^l$ and $\H_k^l$ and we get a commutative diagram
\begin{align*}
  \xymatrix{
    \M_k^l \ar[r] \ar[d]^{s_\pm} & \U_k^l \ar[r] \ar[d]^{s_\pm} &  \H_k^l \ar[d]^{s_\pm} &  \\
    \M_{k'}^{l'} \ar[r] & \U_{k'}^{l'} \ar[r] &  \H_{k'}^{l'}
  }
\end{align*}
We define
\begin{align*}
  \M_\infty = \colim_{k\to \infty,l\to \infty} \M_k^l, \qquad \U_\infty = \colim_{k\to \infty,l\to\infty} \U_k^l, \qquad \H_\infty = \colim_{k\to \infty,l\to\infty} \H_k^l
\end{align*}
all under these stabilization maps. The stabilizations also preserve the subspaces $(\H_k^l)^{\geq 1}$, and we have a contractible subspace $\H_\infty^{\geq 1} =\colim_{k \to \infty,l\to \infty} (\H_k^l)^{\geq 1} \subset \H_\infty$. The homotopy in Equation~(\ref{eq:Tubes:3}) is compatible with stabilizations and defines a null-homotopy of the identity on $\H_\infty^{\geq 1}$, which defines a null-homotopy of the composition
\begin{align*}
  \M_\infty \to \U_\infty \to \H_\infty.
\end{align*}

\begin{proposition} \label{prop:oldcor:Tubes:1}
  The sequence 
  \begin{align*}
    \M_\infty \to \U_\infty \to \H_\infty.    
  \end{align*}
  with the above null-homotopy is a homotopy fibration sequence.
\end{proposition}

\begin{proof}
  This follows since the map from each $\M_k^l$ to the homotopy fiber of $\U_k^l \to \H_k^l$ is a homotopy equivalence by the above lemma, and this is compatible with taking the colimit.
\end{proof}


\section{Parameterized handle attachments} \label{sec:param-handle-attachm}

In this section we give explicit constructions of functions $f\in \cC_k^l$ depending on certain maps defined on vector spaces (similar to $(V,i)$ from the introduction used to describe the Hatcher--Waldhausen map). In the case considered in the introduction these functions will have level sets $\{f=1\}$ that are smoothings of the boundary of $\{Q_k^l \leq -1\} \cup T$ where $T$ is a tubular neighborhood of the disk image (pink in Figure~\ref{Fig:hwpic}). The idea is to implant a standard Morse function in a certain chart of the tubular neighborhood; we refer to the resulting function as a \emph{mountain pass} function. As the resulting function can be thought of as having the original map as a parameterization of its unstable manifold at the unique critical point with value less than 1, we will refer to these maps as \emph{unstable disk maps}.

There will be two very important cases for later, which we handle simultaneously by making the construction more general: one in which the unstable disks have dimension $k$, as discussed in the introduction, and one in which $l=0$ and the disk dimension is anywhere between $0$ and $k$. This produces a level set which is a so-called tube (see Section~\ref{sec:Hatcher-Wald-map}).

For a vector space $V \subset \R^{k+l}$ we will use the coordinate notation $(v,w)\in V\times V^\perp \subset \R^{k+l}$. We will also consider the generalization
\begin{align*}
  Q_V : \R^{k+l} \to \R
\end{align*}
defined by $Q_V(v,w) = -\norm{v}^2+\norm{w}^2$ (which we do not assume to have signature $l-k$). We will let $\vec v$ denote the (identity) vector field on $V$, that is $\vec v(v) = v \in V=T_vV$. Similarly, we denote the identity vector field on $V^\perp$ by $\vec w$. By abuse we also use this notation for the associated parallel vector fields on $\R^{k+l}$. For example, we have $\nabla Q_V = -2\vec v+2\vec w$.

\begin{defn} \label{defn:un}
  Let $\UN_k^l$ be the space of triples $\tilde e=(V,e,c)$ with $V \subset \R^{k+l}$ a vector space, $e=(e_x,e_y) : V \to \R^k \times \inte D_c^l$ a smooth map, called an \emph{unstable disk map}, and $c\geq 1$ such that
  \begin{itemize}
  \item $e$ composed with the projection to $\R^k$ is proper.
  \item $e$ is the linear inclusion $V\subset \R^{k+l}$ close to $0$.
  \item $\vec v(\norm{e_x}) > 0$ for $\norm{e_x} \geq c$.
  \end{itemize}
  We topologize this using local trivializations of the canonical bundle over $\sqcup_d \Gr_d(\R^{k+l})$ and the product topology on $(e,c)$, where we use the $C^\infty$ topology on $e$.
\end{defn}

The mountain pass functions themselves will be functions in the following space.
\begin{defn} \label{defn:Fkl}
  Let $\F_k^l \subset (\cC_k^l)^1$ be the subspace of those $f$ satisfying:
  \begin{itemize}
  \item There is a $V\in \sqcup_d \Gr_d(\R^{k+l})$ so that $f =Q_V$ close to $0$.
  \item The only critical point with value $\leq 1$ is $0$.
  \end{itemize}
\end{defn}
The parameter $c$ will determine how wide the mountain is, and we will introduce a second parameter $\delta$ that will control how narrow the mountain pass is. To make this precise we will need a few explicit constructions.

Pick a smooth even function $\varphi: \R \to \R$ such that
\begin{itemize}
\item $\varphi(t)=0$ for $|t|\leq 1$.
\item $\varphi'(t) > 0$ for $t>1$.
\item $\varphi(t) = t^2$ for $t\geq 2$.
\end{itemize}
Define
\begin{align} \label{eq:FmodO:1}
  \tilde Q_k^l(x,y) = -\varphi(\norm{x}) + \varphi(\norm{y}). 
\end{align}
This has $\tilde Q_k^l \in \H_k^l$ and is $0$ on $D^k\times D^l$; it has $\nabla Q_k^l$ as a weak pseudo-gradient. By a \emph{weak} pseudo-gradient $X$ for a function $f$ we mean a vector field such that $X(f)\geq0$ with equality only at critical points. A weak pseudo-gradient where the vector field itself vanishes at all critical points will be called a \emph{strict} pseudo-gradient.

We will need the following technical constructions to be able to implant the mountain pass.

\begin{lemma} \label{lemcor:Parhandles:1}
  There exist $C^\infty$ tubular embeddings $\tau_{\tilde e} : V \times D V^\perp \to \R^k \times \inte D^l_c$, continuous in $\tilde e=(V,e,c) \in \UN_k^l$, such that
  \begin{itemize}
  \item The restriction of $\tau_{\tilde e}$ to $V\times \{0\}$ equals $e$.
  \item $\tau_{\tilde e}$ is the standard identification $V\oplus V^\perp = \R^{k+l}$ in a neighborhood of $0$.
  \item $((c \se \tQ_k^l) \circ \tau_{\tilde e})(v,w)$ does not depend on $w \in V^\perp$.
  \end{itemize}
\end{lemma}

\begin{proof}
  As $e$ lands in $\R^k \times D_{c}^l$ we note that
  \begin{align*}
    (c \se \tQ_k^l)(x) = -(c \se \varphi)(\norm{x})  
  \end{align*}
  for $x\in \R^k \times D_c^l$. The last bullet point in Definition~\ref{defn:un} implies that $e$ is strictly transverse to the foliation $S_a^{k-1} \times D_c^l, a\geq c$. These are the level sets of $c \se \tQ_k^l$ in $\R^k \times D_{c}^l$ except that $D_{c}^k \times D_{c}^l$ is one large (degenerate) level set.

   Let $\nu_e \to \im e$ denote a choice of normal bundle to $e$ which, close to zero, is $V^\perp$ and outside $D_c^k \times \inte D_c^l$ is contained in the distribution defined by the foliation $S_a^{k-1} \times D_c^l, a \geq c$. This is possible since the last condition in Definition~\ref{defn:un} makes $\im e$ transverse to this foliation. Give $\nu_e$ the induced metric. Pick a tubular neighborhood $T : D\nu_e \to \R^k \times \inte D_c^l$, continuous in $e$, such that $T(z,w) = z+w$ for $z\in \im e$ close to $0$ and such that $T(z,-)$ has image in a single leaf of the foliation for $\norm{z} \geq c$. Pick a family of isometric isomorphisms $\Phi_e : \nu_e \cong \im e \times V^\perp$, that equal the identity close to $0$, and define
  \begin{align*}
    \tau_{\tilde e} : V \times DV^\perp \xrightarrow{e\times \id} \im e \times D V^\perp \xrightarrow{\Phi_e^{-1}} D \nu_e \xrightarrow{T} \R^k \times \inte D^l_c.
  \end{align*}
\end{proof}

We fix such a choice of $\tau_{\tilde e}$ in the following, and use this to construct the mountain pass functions.

\begin{lemma} \label{lem:FmodO:7}
  There is a map $\MP : \UN_k^l\times (0,1] \to \F_k^l$ such that with
  \begin{align*}
    W=\tau_{V,e,c}(V\times D_{\delta}V^\perp) \cap \{c\se \tQ_k^l \geq -3\}
  \end{align*}
  we have
  \begin{itemize}
  \item $\MP(V,e,c,\delta)$ equals $c\se\tilde Q_k^l+2$ outside $W$.
  \item $\MP(V,e,c,\delta)$ equals $Q_V$ in a neighborhood of $0$.
  \item $\MP(V,e,c,\delta) \circ \tau_{V,e,c}$ has $\nabla Q_V$ as a strict pseudo-gradient on $V \times (\inte D_\delta V^\perp)$.
  \end{itemize}
\end{lemma}

This construction moves up the values of $c\se\tilde Q_k^l$ by $2$ and implants a narrow mountain pass in $W$ along $e$ using the tubular neighborhood $\tau_{\tilde e}$. How narrow the pass is depends on $\delta$.

\begin{proof}
  The properness in Definition~\ref{defn:un} implies that $W$ is compact. Define $T=V\times D_\delta V^\perp$ and $W' = \tau_{\tilde e}^{-1}(W)\subset T$. Note that $W'$ is also compact. Consider the function $P : T \to \R$ given by
  \begin{align*}
    P = (c\se\tQ_k^l) \circ \tau_{\tilde e\mid W'}.
  \end{align*}
  The last property of the tube from Lemma~\ref{lemcor:Parhandles:1} is then that $P(v,w)=P(v,0)=(c \se \tQ_k^l)(e(v))$. This together with the definition of $\UN_k^l$ implies that
  \begin{align*}
    \vec v(P) \leq 0 \qquad \textrm{and} \qquad (\vec v(P) < 0 \quad \textrm{for} \quad P(v,0)<0).
  \end{align*}  
  Fix a smooth family of smooth functions $\psi_\delta : [0,\infty) \to [-2,0]$ for $\delta\in(0,1]$ such that
  \begin{itemize}
  \item $\psi_\delta(t)=-2+t^2$ for small $t$.
  \item $\psi_\delta'(t)>0$ for $t\in (0,\delta)$.
  \item $\psi_\delta(t)=0$ for $t\geq \delta$.
  \end{itemize}
  Fix a smooth function $\opsi : \R \to [0,1]$ so that
  \begin{itemize}
  \item $0 \leq \opsi'(t) < \tfrac12$ for all $t$.
  \item $\opsi(t)=0$ for $t\leq -3$.
  \item $\opsi(t)=1$ for small $t\geq 0$.
  \end{itemize}
  Define the alternative function
  \begin{align*}
    \tilde P(v,w) =
    \begin{cases}  
      P(v,0) + 2 + \opsi(P(v,0))\psi_\delta(\norm{w}) & (v,w) \in W' \\
      P(v,0) + 2 & (v,w) \in T\setminus W'
    \end{cases}
  \end{align*}
  On the boundary of $W'$ this equals $P(v,0)+2$ to all orders. Indeed, either $P(v,0)=-3$ or $\norm{w}=\delta$ on this boundary. It also equals $\norm{w}^2$ in a neighborhood of $0$, and it has
  \begin{align*}
    \vec v(\tilde P) = \vec v(P) (1 + \opsi'(P)\psi_\delta(\norm{w})) \leq 0
  \end{align*}
  with equality on $T$ only at points where $\vec v(P)=0$. We also have
  \begin{align*}
    \vec w(\tilde P(v,w)) = \opsi(P)\norm{w} \psi_\delta'(\norm{w}) \geq 0
  \end{align*}
  with equality on $T$ only when $\opsi(P)=0$ or $\norm{w}\in\{0,\delta\}$. It follows that
  \begin{align*}
    \nabla Q_V = -2\vec v + 2\vec w
  \end{align*}
  is a weak pseudo-gradient for $\tilde P$ on $T$. It is strict outside the union of $\partial T$ and $Z:=\{\norm{w}=0\}\cap \{\tilde P = 0\}$, which is in the interior of $W'$. All points in $Z$ are critical points for $\tilde P$. We can remove these, except the one at $0$, by adding a $C^1$-small function $\psi$ with support in $\inte W'$ which equals $-\norm{v}^2$ in a neighborhood of $0$ and such that $\vec v(\psi)<0$ on $Z$. Now $\nabla Q_V$ is a strict pseudo-gradient on the interior of $T$ and the function equals $Q_V$ in a neighborhood of $0$. Implanting the resulting function on $W \subset \R^{k+l}$ using $\tau_{\tilde e}$ and extending by $c\se \tilde Q_k^l+2$ proves the lemma.
\end{proof}

We will need to adjust the parameters $c$ and $\delta$ depending on an already given function $f \in \F_k^l$ for which we have a given candidate for an unstable disk map. The following definition and corollary make what we need precise.

\begin{defn} \label{defn:unstabledisk}
  An element $(V,e,c) \in \UN_k^l$ is called an unstable disk map \emph{for} $f\in \F_k^l$, if $f=Q_V$ in a neighborhood of $0$ and $f\circ e$ has $-\vec v$ as a strict pseudo-gradient.

  Let $\UNF_k^l$ be the space of pairs $(\tilde e,f)=(V,e,c,f) \in \UN_k^l \times \F_k^l$ such that $(V,e,c)$ is an unstable disk map for $f$.
\end{defn}

\begin{corollary} \label{cor:Parhandles:1}
  There exists a continuous function $\delta_1 : \UNF_k^l \to (0,1]$ such that with $\delta \leq \delta_1(\tilde e,f)$ and $c\geq c_f$ we have that the convex interpolation
  \begin{align*}
    t \mapsto (1-t)f + t\MP(V,e,c,\delta)
  \end{align*}
  defines a path in $\F_k^l$.
\end{corollary}

\begin{proof}
  The function $g=f\circ \tau_{\tilde e}$ has, by the assumptions, $-\vec v$ as a strict pseudo-gradient on $V\times \{0\}$. Close to $0$ we have $g=Q_V$. So the function $dg(\nabla Q_V)$ is positive on $(V \setminus \{0\}) \times \{0\}$ and equals $4\norm{\vec v}^2+4\norm{\vec w}^2$ near $0$. It follows that we can use bounds on the first three derivatives to find a $\delta_1$ so that this is positive away from $0$ on the compact set $W'$ defined as in the lemma above for a $\delta\leq \delta_1$. It follows that $(d\tau_{\tilde e})_*(\nabla Q_V)$ is a strict pseudo-gradient for both $\MP(V,e,c,\delta)$ and $f$ on the interior of $W$ (defined as in the lemma above). It then follows that the convex interpolation cannot have any other critical points than $0$ in the interior of $W$.

  For $c \geq c_f$ we see that $\nabla Q_k^l$ is a weak pseudo-gradient for both $f$ and $\MP(V, e, c, \delta)$ on the closure of the complement of $W$. It follows that the convex interpolation cannot have critical points other than convex interpolations of critical points; since both functions have all critical values above 1 on this set, so will the convex interpolations.
\end{proof}


\section{Identification of $\U_\infty \simeq F/O$} \label{sec:Identify-FO}

The goal of this section is to identify $\U_\infty$ from the fibration sequence in Proposition~\ref{prop:oldcor:Tubes:1} as $F/O$. We also introduce replacement spaces $\tU_k^l\to \U_k^l$ with compatible stabilizations, which give an equivalence in the colimit and which will be convenient in the next section.

We first generalize the spaces $\cC_k^l$ to spaces $\oC_k^l$, which consist of pairs $(f,X)$ with $f\in \cC_k^l$ and $X$ a smooth vector field on $\R^{k+l}$ such that:
\begin{itemize}
\item $\norm{X-\nabla Q_k^l}_{L^\infty} < \infty$.
\item $X(f)\geq 0$ with equality only at critical points (a weak pseudo-gradient).
\end{itemize}

Similarly, to $c_f$ we define
\begin{align}   \label{eq:FmodO:2}
  c_{f,X}-1 = \norm{(f,X)-(Q_k^l,\nabla Q_k^l)}_{C^1} = \max(\norm{f-Q_k^l}_{C^1},\norm{X-\nabla Q_k^l}_{L^\infty})
\end{align}
and topologize $\oC_k^l$ as a subspace of the product of two $C^\infty$ spaces and $\R$, identifying it with triples $(f,X,c_{f,X})$ so that $c_{f,X}$ defines a continuous function. We again define the subspace
\begin{align*}
  D_b\oC_k^l \subset \oC_k^l
\end{align*}
where both the support of $f-Q_k^l$ and $X-\nabla Q_k^l$ are contained in $D_b^{k+l}$, and again this inherits the $C^\infty$ topology.

As before, we define a homotopy $B_t=(B_t^f,B_t^X) : \oC_k^l \to \oC_k^l$ by
\begin{align*}
  B_t^f(f,X)(z) &= (1-t\varphi(\norm{z}-c_{f,X}))f(z) + t\varphi(\norm{z}-c_{f,X})Q_k^l(z), \\
  B_t^X(f,X)(z) &= (1-t\varphi(\norm{z}-c_{f,X}))X(z) + t\varphi(\norm{z}-c_{f,X})\nabla Q_k^l(z)
\end{align*}
with $\varphi$ as used in Equation~(\ref{eq:Gauss:2}).

\begin{lemma} \label{lem:Gauss:1b}
  The homotopy $B_t : \oC_k^l \to \oC_k^l$ is well-defined and for each $t$ the functions $B^f_t(f,X)$ and $f$ are equal near their critical points.
\end{lemma}

\begin{proof}
  The proof is almost the same as the proof of Lemma~\ref{lem:Gauss:1} (noting that $c_{f,X} \geq c_f$). The difference is that here we also need to prove that $B_t^X(f,X)$ is actually a weak pseudo-gradient for $B_t^f(f,X)$.

  We only need to consider points where $\norm{z} > c_{f,X}$ as otherwise $B_t^f(f,X)=f$ and $B_t^X(f,X)=X$. On this set we have $\norm{\nabla Q_k^l} > 2c_{f,X}$. In Lemma~\ref{lem:Gauss:1} we saw that $\norm{\nabla B_t^f(f,X)-\nabla Q_k^l} < \sqrt2 c_f \leq \sqrt2 c_{f,X} < \tfrac{1}{\sqrt{2}}\norm{\nabla Q_k^l}$. From the definition above we have $\norm{X-\nabla Q_k^l} \leq c_{f,X}<\tfrac12 \norm{\nabla Q_k^l}$. For three vectors $a_1,a_2,b$ with $2\norm{a_i-b} < \sqrt 2\norm{b}$ we have that $\inner{a_i,b}>0$ and $\inner{a_1,a_2}>0$. Combining this we get that both $\nabla Q_k^l$ and $X$ are strict pseudo-gradients for $B_t^f(f,X)$ for $\norm{z} > c_{f,X}$.
\end{proof}

We then consider the subspace $\oU_k^l \subset \oC_k^l$ defined by pairs $(f,X)$ such that
\begin{itemize}
\item $f \in \F_k^l$ from Definition~\ref{defn:Fkl}.
\item $X=\nabla f$ in a neighborhood of $0$.
\end{itemize}
The first condition implies that the unique critical point with value $0$ is actually at $0$. As we saw earlier, this critical point has Morse index $k$; it follows that the signature of the quadratic form is $l-k$. Hence the $d$ in Definition~\ref{defn:Fkl} is in this case equal to $k$.

\begin{lemma} \label{lem:FmodO:4}
  The canonical forgetful map $\oU_k^l \to \U_k^l$ is a homotopy equivalence.  
\end{lemma}
As the proof is standard and would be distracting at this point, we have moved it to Appendix~\ref{sec:proof-stand-results}. The advantages of $\oU_k^l$ are that the unstable manifold emanating from $0$ is flat close to $0$, and the weak pseudo-gradient adds some needed flexibility to the space.

We define the stabilization maps $s_- : \oU_k^l \to \oU_{k+1}^l$ and $s_+: \oU_k^l \to \oU_{k}^{l+1}$ as before on the function part and on the pseudo-gradient part by
\begin{align*}
  s_-^X(f,X) = - 2x_{k+1}\pd{}{x_{k+1}} + X  \quad \textrm{and} \quad   s_+^X(f,X) = X + 2y_{l+1}\pd{}{y_{l+1}}
\end{align*}
where we, by abuse of notation, also denote the relevant pullback of $X$ to $\R^{k+l+1}$ by $X$. With this, the canonical map $\oU_k^l \to \U_k^l$ commutes with stabilizations.

The maps $N_k : \M_k \to \Gr_k(\R^{2k})$ considered in Section~\ref{sec:lagrangian-gauss-map} generalize to maps
\begin{align} \label{eq:FmodO:8}
  N_k^l : \U_k^l \to \Gr_k(\R^{k+l}) \quad \textrm{and} \quad  N_k^l : \oU_k^l \to \Gr_k(\R^{k+l})
\end{align}
which, by abuse of notation, are denoted the same. These are again defined by taking the negative eigenspace of the Hessian at the unique critical point with value $0$ (or equivalently the tangent space of the unstable manifold). We define $s_- : \Gr_k(\R^{k+l}) \to \Gr_{k+1}(\R^{1+k+l})$ by
\begin{align*}
  s_-(V) = \R \oplus V \subset \R \times \R^{k+l} = \R^{1+k+l}
\end{align*}
where the new coordinate is $x_{k+1}$ and $s_+ : \Gr_k(\R^{k+l}) \to \Gr_{k}(\R^{k+l+1})$ by
\begin{align*}
  s_+(V) = V \subset \R^{k+l}\times \R = \R^{k+l+1}
\end{align*}
where the new coordinate is $y_{l+1}$. With this, stabilizations commute, and both versions of $N_k^l$ commute with stabilizations.

Recall that we denote the identity vector field on a vector space by $\vec v$. Using $X$ we may construct a canonical smooth parameterization of the unstable manifold
\begin{align} \label{eq:FmodO:4}
  \un(X) : N_k^l(f) \hookrightarrow \{f \leq 1\}
\end{align}
defined by the conditions
\begin{itemize}
\item $\un(X)$ is the linear inclusion of $N_k^l(f)$ in a neighborhood of $0$.
\item $(d\un(X))(\vec v)=-\tfrac12 X$.
\end{itemize}
Note that these two conditions are not contradictory, since the inclusion locally satisfies this equation. Indeed, $X=\nabla f$ close to $0$ and $f$ is a quadratic form with only $\pm1$ eigenvalues. The conditions uniquely define $\un(X)$ because each radial half-line pointing away from $0$ in $\R^k$ solves a specific ODE (not defined at $0$), and the local condition around zero fixes initial conditions. Importantly, the bound $\norm{X-\nabla Q_k^l} \leq c_{f,X}$ also ensures that the solutions do not run off to infinity in finite time.

We define the map $\oU_k^l \to \UN_k^l$ (from Definition~\ref{defn:un}) by
\begin{align} \label{eq:FmodO:7}
  (f,X) \mapsto (N_k^l(f),\un(X),c_{f,X}).
\end{align}
Indeed, the bound $c_{f,X}$ on $\norm{\nabla Q_k^l -X}_{L^\infty}$ proves both of the following.
\begin{itemize}
\item $-X$ is strictly inward-pointing on the boundary of $\R^k\times D_{c_{f,X}}^l$, which means that $\un(X)$ lands in the interior of this set.
\item The flow of $X$ increases $\norm{x}$ to first order for $\norm{x}\geq c_{f,X}$.
\end{itemize}
Furthermore, $\un(X)$ is proper as there are no critical points with value less than $0$ to which the gradient lines can flow.

By uniqueness we get that
\begin{align*}
  \un(s^X_-(f,X)) : \R \oplus N_k^l(f) \to \{s_-(f) \leq 1\}
\end{align*}
sends $(x_{k+1},v)$ to $(x_{k+1},\un(X)(v)) \in \R^{1+k+l}$. The positive stabilization is easier as it is the same map composed with the inclusion $\R^{k+l} \subset \R^{k+l+1}$.

\begin{lemma}\label{lem:FmodO:3}
  The one-point compactified map $\un(X)^+ : N_k^l(f)^+ \to  \{f\leq 1\}^+$ is a based homotopy equivalence.
\end{lemma}

\begin{proof}
  Standard Morse theory and the fact that we only have the one critical point with value in $[-1,1]$ show that the inclusion $\{f\leq -1\}^+ \cup \im \un(X)^+ \subset \{f\leq 1\}^+$ is a homotopy equivalence. Hence the map
  \begin{align*}
    \im \un(X)^+ / (\im  \un(X)^+ \cap \{f \leq -1\}^+) \to \{f \leq 1\}^+/\{f \leq -1\}^+
  \end{align*}
  is a homotopy equivalence.  The flow of $-X$ shows that both $\{f \leq -1\}^+$ and $\im \un(X)^+ \cap \{f \leq -1\}^+$ are contractible. Hence also $\un(X)^+$ is a homotopy equivalence.
\end{proof}

\begin{defn}
  We let $\tU_k^l \subset \oU_k^l$ be the subspace defined by those $(f,X)$ such that the weak pseudo-gradient $X$ restricted to $\im \un(X)$ points strictly toward $0\in \R^{k+l}$ (except at $0$).
\end{defn}
Pointing strictly toward zero at some point $z$ means that the flow strictly decreases the norm $\norm{z}$. The condition is equivalent to $d(\un(X))(\vec v)$ pointing away from zero. We see that the stabilizations $s_\pm$ preserve these subspaces and thus their restrictions define compatible stabilizations
\begin{align*}
  s_- : \tU_k^l \to \tU_{k+1}^{l} \quad \textrm{and} \quad   s_+ : \tU_k^l \to \tU_{k}^{l+1}.
\end{align*}

In the following we identify $S^k=(\R^k)^+$. Let
\begin{align*}
  F(k) = \Map_*(S^k,S^k)_{\pm 1}
\end{align*}
denote the mapping space of based maps of degree $\pm 1$ with monoid structure given by composition. The basepoint is the identity. The $J$-homomorphism can be represented by the homomorphisms $J_k : O(k) \to F(k)$ sending $A\in O(k)$ to the induced map on $(\R^k)^+$. A delooping of this map is
\begin{align*}
  BJ_k : BO(k) \to BF(k)
\end{align*}
which is the map of classifying spaces that sends a metric vector bundle to its fiberwise one-point compactification.

The next goal is to show that $N_k^l$ lifts to $F/O$. To make this lift explicit we will use the model $\colim_{k,l} (F/O)_k^l$ for $F/O$ where $(F/O)_k^l$ consists of pairs $(V,\theta)$ such that $V \in \Gr_k(\R^{k+l})$ and $\theta : V^+ \to S^k$ is a based homotopy equivalence. Explicitly this is the quotient of $F(k)$ times the highly connected Stiefel manifold of $k$ frames in $\R^{k+l}$ by $O(k)$. It thus fits in the fibration sequence
\begin{align*}
  F(k) \to (F/O)_k^l \to \Gr_k(\R^{k+l}).
\end{align*}
Let
\begin{align*}
  \pi_k : \R^{k+l} \to \R^k
\end{align*}
be the projection onto the $x$ coordinates.

\begin{lemma} \label{lem:FmodO:2}
  The map $(\pi_k \circ \un(X))^+ : (N_k^l(f))^+ \to (\R^k)^+$ is well-defined for each $(f,X) \in \oU_k^l$ and is a homotopy equivalence. This thus defines a canonical lift
  \begin{align*}
    \xymatrix{
      & (F/O)_k^l \ar[d] \\
      \oU_k^l \ar[r]^-{N_k^l} \ar[ur]^{\oN_k^l} & \Gr_k(\R^{k+l}).
    }
  \end{align*}
  given by setting $\theta=(\pi_k \circ \un(X))^+$.
\end{lemma}

\begin{proof}
  By the bound $\norm{f-Q_k^l} \leq c_f$ we have an inclusion $\{f\leq 1\}^+ \subset \{Q_k^l\leq c_f\}^+$. We claim that this is a homotopy equivalence. Indeed, this inclusion is defined for all $f\in \H_k^l$ (not just $\U_k^l$) and the homotopy types are locally constant in $f$. So, the claim follows as the inclusion at $f=Q_k^l$ is a homotopy equivalence and $\H_k^l$ is connected.

  Using Lemma~\ref{lem:FmodO:3} we now see that all maps in the sequence
  \begin{align*}
    N_k^l(f)^+ \xrightarrow{\un(X)^+} \{f\leq 1\}^+ \subset \{Q_k^l \leq c_f\}^+ \xrightarrow{\pi_k^+} (\R^k)^+.
  \end{align*}
  are homotopy equivalences.
\end{proof}

The stabilizations $s_- : (F/O)_k^l \to (F/O)_{k+1}^l$ and $s_+ :(F/O)_k^l \to (F/O)_k^{l+1}$ are defined by $s_-(V,\theta)=(\R\oplus V,\id_{\R} \wedge \theta)$ and $s_+(V,\theta) = (V,\theta)$ (adding only an ambient $y_{l+1}$ coordinate). So, by construction the lifts $\oN_k^l$ commute with stabilizations.

\begin{proposition} \label{prop:Hatcher:1}
  The colimits of the maps
  \begin{align*}
    \oU_\infty \xrightarrow{\oN_\infty} \colim_{k,l\to  \infty} (F/O)_k^l \simeq F/O  \quad \textrm{and} \quad   \tU_\infty \xrightarrow{\oN_\infty} \colim_{k,l\to  \infty} (F/O)_k^l \simeq F/O
  \end{align*}
  are homotopy equivalences. In particular, $\tU_\infty \to \oU_\infty$ is a homotopy equivalence and hence the map $\tU_\infty \to \U_\infty$ is a homotopy equivalence.
\end{proposition}

Before proving this we need a few constructions and lemmas. Let $\oW_k^l \subset \oU_k^l$ denote the subspace of those $(f,X)$ where $f=Q_k^l$ (and thus $X=\nabla Q_k^l$) in a neighborhood of $0$. We have the map of fibrations
\begin{align} \label{eq:FmodO:5}
  \xymatrix{
    \oW_k^l \ar[r] \ar[d]^{i_k^l} & \oU_k^l \ar[r]^-{N_k^l} \ar[d]^{\oN_k^l} & \Gr_k(\R^{k+l}) \ar[d]^= \\
    F(k) \ar[r] & (F/O)_k^l \ar[r] & \Gr_k(\R^{k+l})
  }
\end{align}
where $i_k^l$ is the restriction of $\oN_k^l$.

For any smooth map $\varphi : S^{k-1} \to \R^k-\{0\}$ we call a map
\begin{align*}
  \Phi : \R^k \to \R^k 
\end{align*}
a $b$-\emph{suspension} of $\varphi$ if it satisfies
\begin{align} \label{eq:FmodO:3}
  \Phi(x) = \norm{x}\varphi(\hat x)
\end{align}
when $\norm{x}\norm{\varphi(\hat x)} \geq b$. Let $D_bF(k) \subset F(k)$ be the subspace of one-point compactifications of $b$-suspensions.

\begin{lemma} \label{lem:FmodO:5}
  The map $i_k^l$ takes $D_b\oW_k^l$ to $D_bF(k)$.
\end{lemma}

\begin{proof}
  Outside of $D_b^{k+l}$ we have $d(\un(X))(\vec v) = -\tfrac 12 \nabla Q_k^l$, which means the projection to $\R^k$ satisfies
  \begin{align*}
    d(\pi_k \circ \un(X))(\vec v) = \vec v
  \end{align*}
  (where now $\vec v$ is the identity vector field on $\R^k$) when the image is outside $D_b^{k+l}$, which implies the lemma.
\end{proof}

\begin{lemma} \label{lem:FmodO:6}
  The subspace inclusion $D_bF(k) \to F(k)$ is $(k-2)$-connected.
\end{lemma}

\begin{proof}
  Given a map $\varphi: S^{k-1} \to S^{k-1}$ we can define its suspension as the one-point compactification of $x \mapsto \norm{x}\varphi(\hat x)$. We claim that $D_bF(k)$ deformation retracts onto the subspace of suspensions of smooth maps. Indeed, we construct such a deformation as the concatenation of two deformation retractions: the first is induced by convexly interpolating $\Phi$ as in Equation~(\ref{eq:FmodO:3}) to the map defined globally on $\R^k$ by the same formula. The second is again induced by the same formula while continuously rescaling $\varphi$ to actually land in $S^{k-1}$.

  This means the inclusion is equivalent to the inclusion $F^u(k-1) \to F(k)$ where $F^u(k-1)$ denotes unbased homotopy equivalences $S^{k-1} \to S^{k-1}$ and the map is the suspension. This map is $(k-2)$-connected, as there is a fibration sequence $F(k-1) \to F^u(k-1) \to S^{k-1}$, where the stabilization factors (up to homotopy) as the composition $F(k-1) \to F^u(k-1) \to F(k)$. Furthermore, the stabilization $F(k-1) \to F(k)$ is $(k-1)$-connected by Freudenthal's theorem.
\end{proof}

\begin{proof}[Proof of Proposition~\ref{prop:Hatcher:1}]
  Consider the map of fibration sequences in Equation~(\ref{eq:FmodO:5}) \emph{and} the version where $\oW_k^l \to \oU_k^l$ is replaced by the subspaces $\tW_k^l \to \tU_k^l$. These show that we only need to prove that the corresponding fiber maps $\oW_\infty \to F = \colim_{k \to \infty} F(k)$ and $\tW_\infty \to F$ in the colimit are homotopy equivalences. We prove both cases simultaneously.
  
  \textbf{Surjectivity on homotopy groups}: Let $\theta_- :S^n \to F(k)$ be a based map representing a homotopy class in $\pi_n(F(k))$. That is, $\theta_w \in F(k)$ for each $w\in S^n$, and $\theta_{w_0}=\id_{\R^k}^+$ for the basepoint $w_0 \in S^n$. We may increase this $k$ by stabilizations. The goal is to construct a based family $(f_w,X_w),w\in S^n$ in $\tW_k^l$ (respectively $\oW_k^l$), lifting this homotopy class.

  \textbf{Step 1:} Constructing the unstable disk maps. We will denote these $e_w : \R^k \to \R^{k+l}$ for $w\in S^n$. These will correspond to a family of unstable disk maps, which we then turn into mountain pass functions in Step 2.

  By picking $k$ to be larger than $n+3$ we may assume by Lemma~\ref{lem:FmodO:6} that $\theta_w$ is the one-point compactification of a smooth family of $c$-suspension maps
  \begin{align*}
    \Phi_w : \R^k \to \R^k,\quad w\in S^n
  \end{align*}
  for some chosen $c>1$. We may assume that $\Phi_{w_0}$ is the identity. We may also assume that each $\Phi_w, w\in S^n$ is the identity close to $0$.

  For $l>k+2n+1$ (achieved by positive stabilizations) we may pick a lift of this family to a family of smooth embeddings
  \begin{align*}
    e_w = (\Phi_w,\Phi^y_w): \R^k \to \R^k \times \inte D^l_c
  \end{align*}
  such that for each $w\in S^n$ we have
  \begin{align}
    \label{eq:FmodO:6}
    d e_w (\vec v) = -\tfrac12 \nabla Q_k^l    
  \end{align}
  outside $D_c^k\times D^l_c$. We may assume that each $e_w$ is the standard inclusion close to $0$. We may also assume that $e_{w_0}$ is the standard inclusion globally.

  In the case of $\tW_k^l$ we require this lift to further satisfy the condition that $d(e_w)(\vec v)$ is outward-pointing except at $0$. This may be achieved by starting with a lift $e_w$ as above, and then considering the smooth functions $g_{\hat x,w}(t)=\norm{e_w(t\hat x)}, t\geq 0$ for each $\hat x \in S^{k-1}$ and $w\in S^n$. Each of these is equal to $t$ in a neighborhood of $0$ and strictly positive elsewhere. As $e_w$ follows the flow of $-\tfrac12 \nabla Q_k^l$ outside the set $D_c^k\times D_c^l$ it follows that there is a $t_0>0$ so that $g_{\hat x,w}$ is strictly increasing on $t\geq t_0$ for all $\hat x$ and $w$. Now pick another such family of smooth functions $\tilde g_{\hat x,w} \leq g_{\hat x,w}$ which equals $g_{\hat x,w}$ close to $0$ and on $t \geq t_0$, but whose derivative is positive away from $0$. We may then replace $e_w$ by a small embedded perturbation of the smooth map
  \begin{align*}
    x \mapsto  \frac{\tilde g_{\hat x,w}(\norm{x})}{g_{\hat x,w}(\norm{x})} e_w(x)
  \end{align*}
  whose norm is now strictly increasing along radial lines. Convex interpolation shows that this represents the same homotopy class in $\pi_n(F(k))$.

  Consistent with the notation from the previous section we put $\tilde e_w=(\R^k,e_w,c)$. We have thus defined a lift $\tilde e_{-} : S^n \to \UN_k^l$. Indeed, we made sure that $e_w$ lands in $\R^k \times \inte D_c^l$ and has $(de_w)_*(\vec v)$ equal to $-\tfrac12 \nabla Q_k^l$ outside $D_c^k \times D_c^l$ so that its flow strictly increases $\norm{x}$.

  \textbf{Step 2:} Constructing an unbased lift to $\F_k^l$. Applying Lemma~\ref{lem:FmodO:7} we may use $\tilde e_w$ to define the family of functions $f_w=\MP(\tilde e_w,\delta)$ for some $\delta\in (0,1]$ for each $w\in S^n$. We will postpone arguing that this lands in $\oW_k^l$ (or $\tW_k^l$).
  
  We need to also define a pseudo-gradient $X_w$ for $f_w$ so that $\un(X_w)=e_w$. We define it to be equal to $(d\tau_{e_w})_*(\nabla Q_k^l)$ on a neighborhood of $\im e_w \cap W$ (with $W$ from Lemma~\ref{lem:FmodO:7}). We extend this (by convex interpolation) to $D_c^k\times D_c^l$ using the gradient of $f_w$. We further extend this to $\R^{k+l}$ using the gradient $\nabla Q_k^l$, which by construction is equal to $-2(de_w)_*(\vec v)$ on the image of $e_w$ outside of $W$.
  
  \textbf{Step 3:} Basepoint. We will prove that our lift $(f_{w_0},X_{w_0})$ over the basepoint ${w_0} \in S^n$ is homotopic to $(Q_k^l,\nabla Q_k^l)$ through maps over $e_{w_0}^+=\id_{S^k}\in F(k)$. This implies (in both cases) that the lift can be assumed to be a based lift. This even implies, for $n\geq 1$, that $(f_w,X_w)$ actually lies in $\oW_k^l\subset \oU_k^l$ (or $\tW_k^l$). As this statement about the lift follows from Step 3 below where we consider a more general relative lift we omit it here.

  For the special case when $n=0$, we note that the unstable disk map illustrated in Figure~\ref{Fig:hwpic} (in the introduction) has $V=\R^1$ and the induced proper map $\R^1 \to \R^1$ is homotopic through proper maps to $-\id_\R$. In the introduction we only described the maps from $D^1$ and not all of $\R$, so the black line should be extended out to infinity in both directions within the red area. Applying Step 2 to this unstable disk map produces a mountain pass function lifting the only nontrivial element in $\pi_0(F(1))=\pi_0(F)$.

  \textbf{Injectivity on homotopy groups.} We assume that $(g_{-},Y_{-}) : S^{n-1} \to \oW_k^l$ (or $\tW_k^l$) is mapped by $i_k^l$ to a map $S^{n-1} \to F(k)$, which is the restriction of a map $\theta_-: D^n \to F(k)$. We again need to lift, but this time relative to the lift already given at the boundary. Again we divide the argument into three steps.

  \textbf{Step 1:} Extending the family of unstable disk embeddings $\un(Y_w) : \R^k \to \R^{k+l}$ to $e_w$ for each $w\in D^n$. We first stabilize to get the bounds $k>n+3$ and $l>k+2n+1$ used in Step 1 above. Using Lemma~\ref{lem:Gauss:1b} we may replace $(g_-,Y_-)$ by a family that lands in some $D_{b}\oW_k^l$ (or $D_{b}\tW_k^l$) for some $b>0$. Using a $c\geq b$, every part of the construction in Step 1 above carries over to this relative case, where we extend $e_w=\un(Y_w)$ from $S^{n-1}$ over $D^n$.

  \textbf{Step 2:} Constructing an unbased lift $(f_w,X_w)$ possibly not equal to the original $(g_w,Y_w)$ over $S^{n-1}$. We may construct $(f_w,X_w)$ with $f_w\in \F_k^l$ for each $w \in D^n$ precisely as in Step 2 above using any $\delta$.

  \textbf{Step 3:} We show that the families $(g_w,Y_w)$ and $(f_w,X_w)$ for $w\in S^{n-1}$ are homotopic through such lifts. That is, there is a homotopy between them preserving each unstable map $\un(X_w)=\un(Y_w)$. This implies that we can insert this homotopy in a little collar around $S^{n-1} \subset D^n$ to get the relative lift. It also implies the basepoint statement in Step 3 above so that each $(f_w,X_w)$ actually lies in the right component $\oW_k^l$ (or $\tW_k^l$).

  We note that as the unstable disk maps agree, each ray must solve the same ODE and we must have $X_w=Y_w$ on the image $\un(X_w)=\un(Y_w)=e_w$ for $w\in S^{n-1}$. It follows by Corollary~\ref{cor:Parhandles:1} that if we pick $c\geq c_{g_-,Y_-}$ and $\delta$ small enough, then the convex interpolation defines such a homotopy from $g_w$ to $f_w$ (in either of the two cases). Again, since the choice of pseudo-gradient (fixed to be $-2e_w(\vec v)$ on the unstable disks) is a contractible choice we may make such a choice for the convex interpolation.
\end{proof}


\section{Identification of the Hatcher--Waldhausen map}\label{sec:Hatcher-Wald-map}

In this section we identify the map $\U_\infty \to \H_\infty$ in the colimit fibration sequence in Proposition~\ref{prop:oldcor:Tubes:1} as the Hatcher--Waldhausen map, thereby finishing the proof of Theorem~\ref{thm:3}.

In \cite{MR686115} Waldhausen defined the so-called tube space, which he denoted $\T_\infty$. Since we will prove that our model using function spaces is equivalent (and is more closely related to the model by Rognes in \cite{MR1282230}), we will also denote our colimit $\T_\infty$. There is a classifying map for the associated spherical fibration $c:\T_\infty \to BF$ (recalled below). In \cite{MR686115} Waldhausen defined the so-called rigid tube map $\rt : BO \to \T_\infty$, which factors the delooping of the $J$-homomorphism as $BJ:BO \xrightarrow{\rt} \T_\infty \xrightarrow{c} BF$. He also identified the homotopy fiber of the map $c:\T_\infty \to BF$ as the stable $h$-cobordism space of a point. We will model this homotopy fiber as a space $\hF_\infty$, and prove that it is also equivalent to our $\H_\infty$. He then considered the map, which is now known as the Hatcher--Waldhausen map, induced on homotopy fibers:
\begin{align*}
  \xymatrix{
    F/O \ar[d]^{\hw} \ar[r] & BO \ar[d]^{\rt} \ar[r]^{J} & BF \ar[d]^= \\
    \hF_\infty \ar[r] &  \T_\infty \ar[r]^c & BF 
  }
\end{align*}
We will first construct a homotopy-commutative diagram
\begin{align} \label{eq:Hatcher:1}
  \xymatrix{
    \hU_\infty \ar[r]^{w_\infty} \ar[d]^{N_\infty} &  \H_\infty \ar[d]^{h_\infty} \\
    BO \ar[r]^{\rt_\infty} &  \T_\infty
  }
\end{align}
where the top map is the colimit of the previously defined (but not named until now) maps $w_k^l : \hU_k^l \to \U_k^l \subset \H_k^l$ and the left vertical map is the colimit of the maps $N_k^l$ from Equation~(\ref{eq:FmodO:8}) extending $N_k=N_k^k$ from Section~\ref{sec:lagrangian-gauss-map}. Part of this construction is to properly recognize the lower map as Waldhausen's rigid tube map. We then lift the vertical maps, in a coherent way, to the homotopy fibers over $BF$ and construct a homotopy-commutative diagram
\begin{align} \label{eq:Hatcher:1b}
  \xymatrix{
    \hU_\infty \ar[r]^{w} \ar[d]^{\tilde N_\infty}_{\simeq} &  \H_\infty \ar[d]_\simeq^{\tilde h_\infty} \\
    F/O \ar[r]^{\hw} &  \hF_\infty
  }
\end{align}
where the left vertical map is now the colimit of the canonical lifts $\tilde N_k^l$ of $N_k^l$ from Lemma~\ref{lem:FmodO:2}.

\subsection{Level-wise construction and a model for tube space}

Let $T_k^l$ be the boundary of a smooth compact tubular neighborhood of $\{0\}\times S^{l-1} \subset \R^k \times \R^l$. Recall that $(\cC_{k+l}^0)^1$ is the space of smooth functions ``close'' to $-\norm{z}^2$ at infinity and with 1 a regular value. We define the tube space $\T_k^l \subset (\cC_{k+l}^0)^1$ as the component containing those $f$ where $\{f=1\}$ is isotopic to $T_k^l$.

We need to make some specific choices to define the missing maps in the diagram in Equation~(\ref{eq:Hatcher:1}). We pick
\begin{figure}[ht]
  \begin{tikzpicture}
    \draw[->] (-1,0) -- (10,0) node[below]{$x$};
    \draw[->] (0,-0.3) -- (0,2.1) node[right]{$y$};
    \draw (3,0.06) -- node[below]{3} (3,-0.06);
    \draw (6,0.06) -- node[below]{6} (6,-0.06);
    \draw (9,0.06) -- node[below]{9} (9,-0.06);
    \draw (-0.06,1) -- node[left]{9} (0.06,1);
    \draw (-0.06,15/8) -- node[left]{15} (0.06,15/8);
    \draw[domain=-1:4, smooth, variable=\x, brown, very thick] plot ({\x}, {\x*\x/8});
    \draw[brown] (3.85,1.9) node[left] {$t^2$};
    \draw[domain=-1:3, smooth, variable=\x] plot ({\x}, {\x*\x/8});
    \draw (3,9/8) to[out=36.87,in=180] (6,15/8) to[out=0,in=165.96] (6.36,14.7/8) -- (8.75,10.2/8) to[out=-14.04,in=86.82] (9,9/8);
    \draw[domain=9:9.6, smooth, variable=\x] plot ({\x}, {90/8-(\x*\x/8)});
    \fill (3,9/8) circle (1.5pt);
    \fill (6,15/8) circle (1.5pt);
    \fill (9,9/8) circle (1.5pt);
    \draw (2,0.5) node[above] {$t^2$};
    \draw (8,1.7) node {slope $<-1$};
    \draw (9.14,0.7) node[right] {$90 - t^2$};
  \end{tikzpicture}
  \caption{The function $\oq$.} \label{Fig:oq}
\end{figure}
a smooth \emph{even} function $\oq:\R \to \R$ such that
\begin{itemize}
\item $\oq(t) = t^2$ for $0 \leq t \leq 3$.
\item $\oq(t) = 90 - t^2$ for $t\geq 9$.
\item $\oq(t) \leq t^2$ for all $t$.
\item $\oq(6) = 15$ is a non-degenerate maximum.
\item $t=0$ and $|t|=6$ are the only critical points for $\oq$.
\item $\oq'(t) < -1$ for $t\geq 9/\sqrt 2 > 6$.
\end{itemize}
This is illustrated in Figure~\ref{Fig:oq}. The last bullet point can be obtained since $\oq$ decreases by $6$ from $t=6$ to $t=9$. We define
\begin{align*}
  \oQ_k^l(x,y) = Q_k^l(x,y) - \norm{y}^2 + \oq(\norm{y}) = -\norm{x}^2 + \oq(\norm{y})
\end{align*}
which equals $Q_k^l$ on $D_{3}^{k+l}$ and satisfies that $\oQ_k^l \in \cC_{k+l}^0$. Using again the shorthand $c_f=\norm{f-Q_k^l}_{C^1}+1 \geq 1$ we define a map
\begin{align*}
  h : \cC_k^l \to \cC_{k+l}^0
\end{align*}
by
\begin{align*}
  h(f) = f - \norm{y}^2  + (c_f\se \oq)(\norm{y}) = f-Q_k^l + (c_f\se\oQ_k^l)
\end{align*}
which leaves $f$ unchanged for $\norm{y} \leq 3c_f$ but outside this set it bends the positive $y$ directions down into negative directions. In particular $h(Q_k^l)=\oQ_k^l$ and by the third bullet point above we have $h(f) \leq f$.

We will need the following results involving $\oq$. Define
\begin{align*}
  F_s(t_1,t_2) = (1-s)(\oq(t_1)+\oq(t_2)) + s\oq(\norm{(t_1,t_2)})
\end{align*}
for each $s\in [0,1]$.

\begin{lemma} \label{lem:Hatcher:6}
  Each function $F_s : \R^2 \to \R$ satisfies that
  \begin{align*}
    \norm{(\nabla F_s)_{(t_1,t_2)}} \leq 1 \quad \Ra \quad F_s(t_1,t_2) \geq 4 \quad \textrm{or} \quad \norm{(t_1,t_2)} \leq 2.
  \end{align*}
  In particular $\oq(t)\geq 4$ if $|\oq'(t)| \leq 1$ and $|t|\geq 2$.
\end{lemma}

\begin{proof}
  For $2 \leq \norm{(t_1,t_2)} \leq 9$ we have that
  \begin{align*}
    \oq(t_1) + \oq(t_2) \geq \min(t_1^2,9) + \min(t_2^2,9) \geq 4 
  \end{align*}
  and $\oq(\norm{(t_1,t_2)}) \geq \min(t_1^2+t_2^2,9) \geq 4$. On the set $\norm{(t_1,t_2)} \geq 9$ and $t_1\geq 9/\sqrt2$ we have
  \begin{align*}
    \pd{F_s}{t_1} \leq (1-s)(-1) + s(-2t_1) \leq -1
  \end{align*}
  and one of these two inequalities is always strict; hence $\norm{\nabla F_s} > 1$ on this set. Similarly, for the symmetric case of $t_2 \geq 9/\sqrt2$.

  The last statement follows from $\oq(t)=F_0(0,t)$ and $|\oq'(t)|=\norm{(\nabla F_0)_{(0,t)}}$.
\end{proof}

We use the lemma to prove a similar statement for the functions $\tilde Q_s \in \cC_{k+l+1}^0$ defined by
\begin{align*}
  \tilde Q_s(x,y,y_{l+1}) = (1-s)(\oQ_k^l(x,y) + \oq(y_{l+1})) + s\oQ_k^{l+1}(x,y,y_{l+1}).
\end{align*}
The following corollary will be needed to handle stabilization maps.

\begin{corollary} \label{cor:Hatcher:3}
  If $\norm{(\nabla \tilde Q_s)_z} \leq 1$ then either $\norm{z} \leq 3$ or $\tilde Q_s(z) \geq 3$.
\end{corollary}

\begin{proof}
  As $\nabla \tilde Q_s(x,y,y_{l+1}) = \pare*{-2x , \tfrac{y}{\norm{y}}\tdd{F_s}{t_1}(\norm{y}),\tdd{F_s}{t_2}(y_{l+1})}$ (for $y\neq 0$) the assumption implies that
  \begin{align*}
    4\norm{x}^2 \leq 1 \qquad \textrm{and} \qquad \norm{(\nabla F_s)_{(\norm{y},y_{l+1})}} \leq 1,
  \end{align*}
  which combined with $\norm{(x,y,y_{l+1})}>3$ gives $\norm{(y,y_{l+1})} >2$. Combining all this with the lemma above yields
  \begin{align*}
    \tilde Q_s(x,y,y_{l+1}) = -\norm{x}^2 + F_s(\norm{y},y_{l+1}) \geq -\tfrac14 + 4 > 3. 
  \end{align*}
\end{proof}

All we will need in this subsection is the following special case.

\begin{corollary} \label{cor:Hatcher:2}
  If $\norm{(\nabla \oQ_k^l)_z} \leq 1$ then either $\norm{z} \leq 3$ or $\oQ_k^l(z) \geq 3$.
\end{corollary}

We now observe that $h(f)$ equals $f$ on $\{\norm{y} \leq 3c_f\}$, where all critical points of $f$ lie. Outside this ball, $h(f)$ will have other critical points. Shrinking by $c_f^{-1}$ we see that
\begin{align*}
  c_f^{-1}\se (h(f)) = c_f^{-1}\se(f-Q_k^l) + \oQ_k^l.
\end{align*}
The first term is bounded in $C^1$ norm by $1$ (by Equation~(\ref{eq:Tubes:2})). So at a critical point we must have $\norm{\nabla \oQ_k^l} \leq 1$. If in addition this point is outside $D_3^{k+l}$ then Corollary~\ref{cor:Hatcher:2} shows that
\begin{align*}
  (c_f^{-1}\se(f-Q_k^l) + \oQ_k^l )(z) \geq -1 + 3 > 1.
\end{align*}
Expanding back, it follows that the critical values of $h(f)$ from the part where it does not equal $f$ are strictly above $c_f^2 \geq 1$. In particular if 1 is regular for $f$ then 1 is regular for $h(f)$.

As $\oq(t)$ has a unique non-degenerate maximum at $t=6$, we have that $\oQ_k^l=h(Q_k^l)$ has the submanifold $\{0\} \times S^{l-1}_6 \subset \R^k \times \R^l$ as a Morse-Bott critical manifold of maxima, and $\{\oQ_k^l=1\}$ is thus the boundary of its tubular neighborhood $\{\oQ_k^l \geq 1\}$ and hence isotopic to $T_k^l$. It follows that $h$ maps the component $\H_k^l$ in $(\cC_k^l)^1$ to the component $\T_k^l$ in $(\cC_{k+l}^0)^1$ and we define the map
\begin{align} \label{eq:Hatcher:5}
  h_k^l : \H_k^l \to \T_k^l
\end{align}
by restricting $h$. We record the following result proved above.

\begin{lemma} \label{lem:Hatcher:7}
  The critical points of $h_k^l(f)$ are those of $f$ on $D_{3c_f}^{k+l}$ where $h_k^l(f)=f$, and the critical points outside this ball all have value strictly larger than $1$.
\end{lemma}

We define the rigid tube map at level $(k,l)$
\begin{align*}
  \rt_k^l : \Gr_k(\R^{k+l}) \to \T_k^l
\end{align*}
by $\rt_k^l(V)=\MP(V,i_V,1,1)$ where $i_V : V \subset \R^{k+l}$ is the linear inclusion and $\MP(-,1):\UN_{k+l}^0 \to \cC_{k+l}^0$ is the mountain pass construction from Lemma~\ref{lem:FmodO:7}. The sublevel set $\{\rt_k^l(V) \leq 1\}$ is (the smoothing of) a tubular neighborhood around $V$ together with a neighborhood of infinity, which is how Rognes described the rigid tube map. We will return to the specifics of this in a later subsection.

We have now defined the maps in the unstable version of the diagram from Equation~(\ref{eq:Hatcher:1}):
\begin{align} \label{eq:Hatcher:3}
  \xymatrix{
    \hU_k^l \ar[d]^{N_k^l} \ar[r]^{w_k^l} &  \H_k^l \ar[d]^{h_k^l} \\
    \Gr_k(\R^{k+l}) \ar[r]^{\rt_k^l}  \ar@{=>}[ur]|{H_t} &  \T_k^l
  }
\end{align}
The two composites around this diagram are now somewhat different. However, as indicated, the diagram commutes up to homotopy, and the rest of this subsection is devoted to constructing this homotopy $H_t$.

For $(f,X) \in \tU_k^l$, consider the following ``shrinking'' homotopy
\begin{align*}
  \un(X)_t : N_k^l(f) \to \R^{k+l}
\end{align*}
from the inclusion of the tangent space $N_k^l(f) \subset \R^{k+l}$ to $\un(X)$ defined by
\begin{align*}
  \un(X)_t(v) =
  \begin{cases}
     v & t=0 \\
     t^{-1}\un(X)(tv) & t \in (0,1]
  \end{cases}
\end{align*}
This is smooth since $\un(X)$ equals the linear inclusion in a neighborhood of $0$. The image of $\un(X)_t$ is the image of $\un(X)$ scaled by $t^{-1}$. These maps are transversely outward-pointing on \emph{all} spheres (this was part of the definition of $\tU_k^l$). This means that we can define a homotopy
\begin{align*}
  \tilde e : \tU_k^l \times I \to \UN_{k+l}^0 \qquad \qquad \textrm{(from Definition~\ref{defn:un})}
\end{align*}
given by
\begin{align*}
  \tilde e_t(f,X) = \pare*{N_k^l(f),\un(X)_t,(1-t) + tc_{h_k^l(f)}}.
\end{align*}
We also define
\begin{align*}
  \delta(t) = (1-t) + t\delta_1(N_k^l(f),\un(X),c_{h_k^l(f)},h_k^l(f)),
\end{align*}
where $\delta_1$ is the function constructed in Corollary~\ref{cor:Parhandles:1}.

We then define the homotopy $H_t$ by
\begin{align*}
  H_t(f,X) =
  \begin{cases}
    \MP(\tilde e_{2t}(f,X),\delta(2t)) & 0\leq t\leq\tfrac12 \\
    (2-2t)\MP(\tilde e_1(f,X),\delta(1)) + (2t-1)h_k^l(f) & \tfrac12 \leq t \leq 1
  \end{cases}
\end{align*}
The first part of $H_t$ expands the plateau and narrows the mountain pass while moving the unstable disk from the linear inclusion of $N_k^l(f)$ to the actual unstable disk of $f$. Note that as this disk lands in $\R^k \times D_{c_f}^l$ it is also an unstable disk for $h_k^l(f)$ as this equals $f$ on this set.

By Corollary~\ref{cor:Parhandles:1} the convex interpolation for $t\in [\tfrac12,1]$ provides well-defined functions in $\F_{k+l}^0 \subset (\cC_{k+l}^0)^1$. Hence these are also in $\T_k^l$.

\subsection{Stabilizations}

In this subsection we define stabilizations $s_\pm : \T_k^l \to \T_{k'}^{l'}$. However, the maps constructed in the previous subsection will not be strictly compatible with these. We resolve this by defining explicit homotopies at each stage that uniquely define a homotopy class of colimit maps. We even lift this structure to the homotopy $H_t$ so that the resulting diagram of colimit maps commutes up to homotopy.

Assume that $f_k : A_k \to B_k$ is a sequence of maps that commute up to homotopy with stabilization maps $s : A_k \to A_{k+1}$ and $s : B_k \to B_{k+1}$. If we pick a homotopy $h_k : A_k\times I \to B_{k+1}$ at each stage from $s \circ f_k$ to $f_{k+1} \circ s$, then we get an induced map
\begin{align*}
  A_\infty \simeq TA_\infty \to B_\infty
\end{align*}
where $TA_\infty$ denotes the infinite mapping telescope. Of course the homotopy class of this map can depend on the choice of homotopies, which is why we need to be a bit careful.

For our tube spaces $\T_k^l$ we define stabilizations
\begin{align*}
  s_-(f) = - x_{k+1}^2 + f 
\end{align*}
and
\begin{align} \label{eq:Hatcher:2}
  s_+(f) = f + (c_f\se\oq)(y_{l+1}).
\end{align}
Note that we have $s_- \circ s_+ = s_+ \circ s_-$ as $c_{s_-(f)} = c_f$. These stabilizations are well-defined because of the following lemmas, and one may check that $  \{s_\pm(\oQ_k^l)=1\} $ is isotopic to $T_{k+1}^l$ or $T_k^{l+1}$ depending on the sign. In fact this follows from the homotopy that we will construct in Lemma~\ref{lem:Hatcher:4}.

\begin{lemma} \label{lem:Hatcher:1}
  For any $f \in \cC_k^l$ we have that the critical points lie in $D_{c_f}^{k+l}$ and their critical values are bounded from below by $1-c_f-c_f^2$.
\end{lemma}

\begin{proof}
  As $\norm{f-Q_k^l} \leq c_f$ and $(\nabla Q_k^l)_{(x,y)} = -2x+2y$ has norm $2\norm{(x,y)}$, the gradient of $f$ is nonzero outside $D_{c_f}^{k+l}$. On $D_{c_f}^{k+l}$ we have
  \begin{align*}
    f(x,y) \geq Q_k^l(x,y) - c_f + 1 \geq -\norm{x}^2 - c_f + 1 \geq 1-c_f-c_f^2.
  \end{align*}
\end{proof}

\begin{lemma} \label{lem:Hatcher:2}
  Any critical point of $s_+(f)$ with critical value less than or equal to 1 has $y_{l+1}=0$. In particular $1$ is a regular value for $s_+(f)$.
\end{lemma}

\begin{proof}
  Any critical point $(z,y_{l+1})$ for $s_+(f)$ consists of a critical point $z$ for $f$ and either $y_{l+1}=0$ or $|y_{l+1}|=6 c_f$. So we need to check that those with $|y_{l+1}|=6c_f$ have value above 1. Lemma~\ref{lem:Hatcher:1} and $\oq(6) = 15$ now yield for these points that
  \begin{align} \label{eq:Hatcher:6}
    f(z) + (c_f\se\oq)(y_{l+1}) > 1 - c_f - c_f^2 + 15c_f^2  > 1.
  \end{align}
\end{proof}

The map $h_k^l$ from Equation~(\ref{eq:Hatcher:5}) satisfies $h_k^l \circ s_- = s_- \circ h_k^l$. However, we need a homotopy for the positive stabilizations.

\begin{lemma} \label{lem:Hatcher:4}
  We have a homotopy $\alpha_t^+$ from $s_+ \circ h_k^l$ to $h_k^{l+1} \circ s_+$ such that
  \begin{align*}
    \{f\leq 1\} \times \{0\} \subset \{\alpha_t^+(f) \leq 1\} 
  \end{align*}
  for each $t\in I$.
\end{lemma}

\begin{proof}
  We define the homotopy between these by
  \begin{align*}
    \alpha_t^+(f)(x,y,y_{l+1})  =h_k^l(f)(x,y) + (((1-2t)c_{h_k^l(f)}+2tc_f)\se\oq)(y_{l+1})
  \end{align*}
  for $0\leq t \leq \tfrac12$, and then the convex interpolation from $\alpha_{1/2}^+(f)$ to $s_+(h_k^l(f))$, which is given by 
  \begin{align*}
    \alpha_t^+(f) = f - Q_k^l + c_f \se \tilde Q_{2t-1}
  \end{align*}
  for $\tfrac12 \leq t \leq 1$, where $\tilde Q_s$ is the function considered in Corollary~\ref{cor:Hatcher:3}. Note that $\alpha_1^+(f)=h_k^l(s_+(f))$ uses that $c_{s_+(f)}=c_f$ for $f\in \H_k^l$.

  The first half of this homotopy is well-defined as Lemmas~\ref{lem:Hatcher:1} and~\ref{lem:Hatcher:7} tell us that $h_k^l(f)$ has all its critical values above $1-c_f-c_f^2$, which means that any expansion factor $\geq c_f$ is enough to make sure that all critical points away from $y_{l+1}=0$ have value strictly larger than 1. This follows as in the proof of Lemma~\ref{lem:Hatcher:2}.

  The second half of the homotopy is also well-defined. Indeed, this follows from Corollary~\ref{cor:Hatcher:3} and the fact that $\norm{c_f^{-1} \se( f-Q_k^l)}_{C^1} < 1$, which implies that at a critical point of $c_f^{-1} \se( f-Q_k^l) + \tilde Q_s$ we have $\norm{\nabla \tilde Q_s} \leq 1$, which then by the corollary implies that
  \begin{align*}
    c_f^{-1} \se(f-Q_k^l) + \tilde Q_s > -1 + 3 >2
  \end{align*}
  on the set where it does not equal $f + y_{l+1}^2$. Hence the critical values of $\alpha_t^+(f)$, for $t\geq \tfrac12$, that are greater than or equal to 1 are all above $2c_f^2>1$. Hence 1 is regular throughout.

  The last statement in the lemma follows as all the maps in the homotopy are equal to $h_k^l(f) \leq f$ on the set $y_{l+1}=0$.
\end{proof}

We define the colimit map $h_\infty : \H_\infty \to \T_\infty$ using the maps $h_k=h_k^k$ and the homotopies $\alpha^k_t=s_- \circ \alpha_t^+$.

Recall that for a vector space $V\subset \R^n$, we let $i_V : V \to \R^n$ denote the inclusion.

\begin{lemma} \label{lem:Hatcher:5}
  We have homotopies $\beta_t^-$ from $s_- \circ \rt_k^l$ to $\rt_{k+1}^l \circ s_-$ and $\beta^+_t : s_+ \circ \rt_k^l \simeq \rt_k^{l+1} \circ s_+$ such that for $V \in \Gr_k(\R^{k+l})$ the inclusions
  \begin{align*}
    \R\oplus V \subset \{\beta_t^-(V) \leq 1\} \quad \textrm{respectively} \quad V \subset \{\beta_t^+(V) \leq 1\}
  \end{align*}
  are unstable disk maps for $\beta_t^\pm(V)$ (cf. Definition~\ref{defn:unstabledisk}) for each $t\in I$.
\end{lemma}

\begin{proof}
    We need to provide a path in $\T_{k+1}^l$ between the two functions $-x_{k+1}^2 + \MP(V,i_V,1,1)$ and $\MP(\R \oplus V,i_{\R\oplus V},1,1)$. These both have unstable disk given by $i_{\R \oplus V}$. It follows by Corollary~\ref{cor:Parhandles:1} that we can follow the path $\MP(\R \oplus V, i_{\R\oplus V},t,t^{-1})$ for $t\geq 1$ until the plateau is large enough and the pass is narrow enough to convexly interpolate to the first function. By construction the map $i_{\R\oplus V}$ is an unstable disk map throughout.

  The argument in the other case is the same, except we use the map $i_V$.
\end{proof}

We use these to define the colimit map $\rt_\infty$ using the maps $\rt_k=\rt_k^k$ and the homotopies $\beta^k$, where $\beta^k$ is the concatenation of the two homotopies $s_- \circ \beta^+_t$ and $\beta_t^- \circ s_+$. 

\begin{lemma} \label{lem:Hatcher:11}
  With the above defined maps, the diagram in Equation~(\ref{eq:Hatcher:1}) commutes up to homotopy.
\end{lemma}

\begin{proof}
  Let $H_u^k=H_u$ be the homotopy defined in the previous subsection at level $k=l$. Consider the map $C : \tU_k\times (\partial I^2) \to \T_{k+1}$ given by
  \begin{itemize}
  \item $C(f,X,t,0) = \alpha_t^k \circ w_k$.
  \item $C(f,X,t,1) = \beta_t^k \circ N_k$.
  \item $C(f,X,0,u) = s \circ H_u^k$.
  \item $C(f,X,1,u) = H_u^{k+1} \circ s$.
  \end{itemize}
  We get a homotopy between the induced colimit maps if we fill out the square. For each point $(t,u)$ in the square we have the map
  \begin{align} \label{eq:Hatcher:10}
    \id_\R \times \un(X)_{\min(2u,1)} : \R \times N_k(f)  \to \R^{k+l+1}.
  \end{align}
  By construction and the inclusion conditions in Lemmas~\ref{lem:Hatcher:4} and~\ref{lem:Hatcher:5}, this is, for each $(t,u) \in \partial I^2$, an unstable disk map for the function defined at that point. This means that using Corollary~\ref{cor:Parhandles:1} we can, as when creating the homotopies, use mountain pass functions to fill in the square.
\end{proof}

\subsection{Comparison to Waldhausen's construction}

In this subsection we lift the map $h_\infty$ from Equation~(\ref{eq:Hatcher:1}) to the map $\th_\infty$ in Equation~(\ref{eq:Hatcher:1b}) and prove that this lift is a homotopy equivalence. The latter is done by relating our spaces to Waldhausen's $h$-cobordism spaces and then using known results about these. We first reduce the problem a bit by replacing the sequence with a sequence of subspaces that behaves better.

Let $M$ be a compact smooth connected manifold, possibly with boundary. We now recall Waldhausen's definition from \cite{MR686115} of a simplicial set $H(M)$ called the $h$-cobordism space. 

By an $h$-cobordism we will mean a smooth compact submanifold $P \subset M \times (0,1)$ of codimension 1 such that
\begin{itemize}
\item $P$ agrees with $M\times \{a\}$ in a neighborhood of $\partial M \times (0,1)$ for some $a\in (0,1)$,
\item $\partial P = \partial M \times \{a\}$, and
\item the set between $M\times \{0\}$ and $P$ is well-defined and is an $h$-cobordism, which we call the associated cobordism.
\end{itemize}
A $p$-simplex in $H(M)$ is a smooth family of $h$-cobordisms parameterized by $\Delta^p$. That is, a smooth submanifold $\tilde P \subset M \times (0,1) \times \Delta^p$ which is transverse to the fibers of the projection to $\Delta^p$ and such that each of these intersections yields an $h$-cobordism. Even though we allow the number $a\in (0,1)$ to vary smoothly over the simplex, we do require that there is a neighborhood around $(\partial M) \times (0,1) \times \Delta^p$ in which $\tilde P$ is equal to $\partial M$ times the graph of this $a : \Delta^p \to (0,1)$. This is a technical condition and does not change the homotopy type of the simplicial set. The face and degeneracy maps are the obvious restrictions and pullbacks.

We let $H_{\textrm{sm}}(M)$ denote the $C^\infty$ space of $h$-cobordisms. The following lemma replaces Waldhausen's simplicial sets with these spaces.

\begin{lemma} \label{lem:GenFun:3}
  The map $|H(M)| \to H_{\textrm{sm}}(M)$ is a homotopy equivalence.
\end{lemma}

Here $|-|$ denotes geometric realization.

\begin{proof}
  This follows from standard smooth approximation theory together with a careful treatment of the conditions near the boundary.
\end{proof}

As the smooth versions are more directly related to our spaces we will work with these from now on. Let $H_{\textrm{sm}}'(M) \subset H_{\textrm{sm}}(M)$ be the subspace where $\partial P = \partial M \times \{\tfrac12\}$. 

\begin{lemma} \label{lem:Hatcher:9}
  The inclusion $H_{\textrm{sm}}'(M) \subset H_{\textrm{sm}}(M)$ is a homotopy equivalence.
\end{lemma}

\begin{proof}
  This is standard, so we omit the proof.
\end{proof}

Assume that $N$ is another smooth compact manifold and we are given a codimension 0 embedding $\inte M \subset N$. This induces an inclusion $H(M) \subset H(N)$. Indeed, this is given by extending any $P$ which equals $M \times \{a\}$ near $\partial M \times (0,1)$ with $(N\setminus \inte M)\times \{a\}$ outside $\inte M$. The following lemma is standard, and we have moved its proof to Appendix~\ref{sec:proof-stand-results}.

\begin{lemma} \label{lem:GenFun:2}
  If the inclusion $\inte M \to N$ is $n$-connected and $n \leq \dim M$ then the inclusion $H_{\textrm{sm}}(M) \subset H_{\textrm{sm}}(N)$ is at least $(\tfrac{n}{3}-5)$-connected.
\end{lemma}

We also need to consider a variant of this $h$-cobordism space. Indeed, assume $\partial M$ and $M$ are connected and let $H_{\partial}(M)$ be the $C^\infty$ space of connected codimension 1 submanifolds $P \subset \inte {M}$ (closed with no boundary) such that there is an $h$-cobordism $W \subset M$ with $\partial W = \partial M \sqcup P$. We have an inclusion
\begin{align*}
  H_{\textrm{sm}}(\partial M) \to H_\partial(M)
\end{align*}
induced by choosing a collar $c : \partial M\times [0,1) \to M$, which is a contractible choice.

\begin{lemma} \label{lem:GenFun:4}
  If the dimension of $M$ is at least 6 then the inclusion above is a homotopy equivalence.
\end{lemma}

Again, this is standard and we have moved the proof to Appendix~\ref{sec:proof-stand-results}.

Returning to our goal, the maps $c_k^l: \T_k^l \to BF(k)$ (discussed stably in the introduction to this section) are defined to be the classifying maps for the canonical based spherical fibration that has fiber $\{f \leq 1\}^+$ over $f\in\T_k^l$. As we did for $(F/O)_k^l$, we model the homotopy fiber $\hF_k^l$ of $c_k^l : \T_k^l \to BF(k)$ as the space $\hF_k^l$ of pairs $(f,\eta)$ where $f\in \T_k^l$ and $\eta : \{f \leq 1\}^+ \to S^k$ is a based homotopy equivalence. We again focus on the diagonal colimit where $k=l$ and lift the stabilization map $s : \hF_k \to \hF_{k+1}$ by noting that the inclusion
\begin{align*}
  (\R \times \{f \leq 1\} \times \{0\})^+ \subset \{s(f) \leq 1\}^+
\end{align*}
is a homotopy equivalence, and there is thus a contractible space of choices extending the homotopy equivalence
\begin{align*}
  (\id_\R)^+ \wedge \eta : (\R \times \{f \leq 1\} \times \{0\})^+ \to S^{k+1}
\end{align*}
to $\{s(f) \leq 1\}^+$. So we inductively define these stabilizations by making such choices.

\begin{remark}
  We will repeatedly use the following fact. Let $A \subset B$ be a cofibrant inclusion and a homotopy equivalence. Given a map defined on $A$, the space of its extensions to $B$ is contractible.
\end{remark}

To define the lift  $\th_k^l : \H_k^l \to \hF_k^l$ of the map $h_k^l$ we need the following lemma. Recall that $h_k^l(f) \leq f$.

\begin{lemma} \label{lem:Hatcher:3}
  The inclusion $\{f\leq 1\}^+ \subset \{h_k^l(f) \leq 1\}^+$ is a homotopy equivalence.
\end{lemma}

\begin{proof}
  Consider the global map of Serre fibrations:
  \begin{align*}
    \xymatrix{
      \bigcup_{f \in \H_k^l} \{f\leq 1\}^+ \ar[r]^{\subset} \ar[d] & \bigcup_{f \in \T_k^l} \{f\leq 1\}^+ \ar[d] \\
      \H_k^l \ar[r]^{h_k^l} & \T_k^l
    }
  \end{align*}
  We claim that this is an equivalence of spherical fibrations over the map $\H_k^l \to \T_k^l$ (the diagram is a homotopy pull back). Indeed, $\H_k^l$ and $\T_k^l$ are both connected and a direct inspection shows that the inclusion of the fibers over the base points $\{Q_k^l\leq 1\}^+ \subset \{\oQ_k^l \leq 1\}^+\simeq S^k$ is a homotopy equivalence.
\end{proof}

We define the lift $\th_k^l$ by
\begin{align} \label{eq:Hatcher:4}
 \th_k^l(f)=(h_k^l(f),\eta_f),
\end{align}
where $\eta_f: \{h_k^l(f) \leq 1\}^+ \to S^k$ is a choice of extension of $\pi_k^+ : \{f \leq 1\}^+ \to S^k$.

The homotopies $\alpha^k$ used in the previous subsection to define $h_\infty$ also admit lifts up to contractible choices. Indeed, they were defined using Lemma~\ref{lem:Hatcher:4}, which shows that the inclusion
\begin{align*}
  (\R \times \{f\leq 1\} \times \{0\})^+ \subset \{\alpha_t^+(f) \leq 1\}^+
\end{align*}
is defined for each $t$. At both $t=0$ and $t=1$, the above definitions provide extensions of $\pi_{k+1}^+$ to the larger space. It is again a contractible choice to extend these choices to all $t\in I$. This defines the colimit map
\begin{align}\label{eq:Hatcher:7}
  \th_\infty : \H_\infty \to \hF_\infty
\end{align}
from Equation~(\ref{eq:Hatcher:1b}).

\begin{proposition} \label{prop:Hatcher:10}
  The map $\th_\infty$ is a homotopy equivalence.
\end{proposition}

Before we can prove this we need a few lemmas. Let $\nH_k^l \subset \H_k^l$ be the subspace of functions for which $\{f\leq 1\}$ contains $\R^k\times\{0\}$ in its interior. Since $\pi_k : \{f\leq 1\}^+ \to S^k$ is a homotopy equivalence and restricts to the identity on $(\R^k)^+$, the inclusion $(\R^k)^+ \subset\{f \leq 1\}^+$ is a homotopy equivalence.

Let $\nHF_k^l \subset \hF_k^l$ be the subspace where $\{f\leq 1\}$ contains $\R^k\times\{0\} \subset \R^k\times \R^l$ in its interior and the homotopy equivalence $\eta : \{f\leq 1\}^+ \to S^k=(\R^k)^+$ restricts to the identity on $(\R^k)^+$.

We note that these subspaces are preserved by stabilizations, and we have commutative diagrams:
\begin{align} \label{eq:Hatcher:8}
  \xymatrix{
    \nH_k^l \ar[d]^{\th_{k\mid}^l} \ar[r]^{\subset} & \H_k^l \ar[d]^{\th_k^l} \\
    \nHF_k^l \ar[r]^{\subset} & \hF_k^l \\
  }
\end{align}

\begin{lemma}
  Both horizontal inclusions in the diagram above are at least $(l-k-1)$-connected.
\end{lemma}

\begin{proof}
  In the case of $\H_k^l$ we observe that $\{Q_k^l\leq c_f\}$ deformation retracts onto $\{f \leq 1\}$. Pick such a family of deformation retractions and compose the end of them with the standard inclusion $\R^k \to \{Q_k^l \leq c_f\}$. We thus get a family of maps
  \begin{align*}
    \oeta_f : \R^k \to \{f \leq 1\}
  \end{align*}
  such that each equals the standard inclusion outside a compact set and it equals the standard inclusion globally when $\R^k \subset \{f \leq 1\}$.

  For any map $(A,B) \to (\H_k^l,\nH_k^l)$ where $(A,B)$ is a pair of finite CW complexes of dimension at most $l-k-1$ we can perturb the associated family $\oeta_a, a\in A$ and assume that each $\oeta_a$ is a smooth embedding $\R^k \to \R^{k+l}$ that is standard outside a compact set. We may of course do so without changing that $\oeta_b$ is the standard inclusion for $b\in B$ and so that each $\oeta_a$ still equals the standard inclusion outside a compact set.
  
  We may even pick for each $a\in A$ a smooth isotopy through such embeddings $\oeta_a^t$ such that $\oeta_a^0=\oeta_a$ and $\oeta_a^1$ is the standard inclusion. We may again assume that this is constant on $B$ and outside a compact set. These isotopies can be extended to compactly supported isotopies of all of $\R^{k+l}$ (isotopy extension theorem). Applying these to the family $f_a$ defines a homotopy, relative to $B$, to a map sending all of $A$ to $\nH_k^l$, which proves the connectivity statement in the first case.

  In the second case, let again $(A,B) \to (\hF_k^l,\nHF_k^l)$ be a finite CW pair of dimension at most $l-k-1$. For each $a\in A$ we denote the image $(f_a,\eta_a) \in \hF_k^l$. It is a contractible choice to pick a family of based homotopy inverses $\oeta_a : S^k \to \{f_a \leq 1\}^+$ \emph{together with} a based homotopy $h_t^a : S^k \to S^k$ from $\eta_a \circ \oeta_a$ to the identity. So fix such a choice where $\oeta_b, b\in B$ is the standard inclusion and $h_t^b, b\in B$ is the constant homotopy.
  
  We may up to homotopy change the family $\oeta_a$ and assume that they are smooth \emph{based} embeddings $\oeta_a : S^k \to \{f_a \leq 1\}^+ \subset S^{k+l}$. Note that such a change to $\oeta_a$ requires also appending the homotopies $h_t^a$ with $\eta_a$ composed with the homotopy changing $\oeta_a$. If not, we would no longer have that the starting point of the homotopy $h_t^a$ were $\eta_a \circ \oeta_a$. We can then modify $\oeta_a$ near infinity (base point) so that each $\oeta_a$ is the standard inclusion near infinity. Indeed, the connectivity of the associated Stiefel manifold is above $l-1$. Both of these changes to $\oeta_a$ can be done without changing that $\oeta_b$ is already on this form for $b\in B$ (and that $h_t^b$ is the constant homotopy).

  We are now in a situation similar to the first case. Indeed, we can forget infinity and consider $\oeta_a' : \R^k \to \R^{k+l}$ the restriction of $\oeta_a$. Then we pick a compactly supported isotopy to the standard embedding (constant over $B$) and use the isotopy extension theorem to produce a homotopy of the function part $f_a, a\in A$. In this case we need to also change $\eta_a$ by pre-composing with the isotopy and we need to change $\oeta_a$ by post-composition with the inverse of the isotopy, which means that this time we need not make any changes to the homotopy $h_t^a$.

  We have thus reduced to the case where $S^k \subset \{f_a \leq 1\}^+$ and $\oeta_a$ is the standard inclusion, but $\eta_a$ is not exactly the identity on this image. Instead, we only have a homotopy $h_t^a :S^k \to  S^k$ from the composition $\eta_a \circ \oeta_a$ to the identity, and this homotopy is constant over $B$. However, using this homotopy and the homotopy extension property for $S^k \subset S^{k+l}$ we may construct a homotopy relative to $B$ so that $\eta_a$ becomes the identity on $S^k$.
\end{proof}

Consider the diagram
\begin{align*}
  \xymatrix{
    \H_k \ar[r] \ar[d]^{\th_k} & \H_k^l \ar[d]^{\th_k^l} \ar[r] & \H_l \ar[d]^{\th_l} \\
    \hF_k \ar[r] & \hF_k^l \ar[r] & \hF_l
  }
\end{align*}
where the left horizontal maps are positive stabilizations and the right horizontal maps are negative stabilizations. As this diagram commutes up to homotopy, it follows that, irrespective of the homotopies chosen to define $\th_\infty$, it is enough to show that the restriction to the sequence of subspaces (in Equation~(\ref{eq:Hatcher:8})) is an equivalence in the colimit. However, we note here that the homotopies constructed in Lemma~\ref{lem:Hatcher:4} and the lifts in Equation~(\ref{eq:Hatcher:4}) restrict to homotopies and lift on the left subspaces in Equation~(\ref{eq:Hatcher:8}). This means that there are no subtleties in defining the colimit map on these subspaces.

Define $\nT_k \subset \T_k$ as the subspace of maps where $\R^k\times\{0\} \subset \{f \leq 1\}$ and the inclusion $(\R^k)^+= (\R^k\times \{0\})^+ \subset \{f\leq 1\}^+$ is a homotopy equivalence. We then note that the forgetful map $\nHF_k \to \nT_k$ that forgets $\eta$ is a homotopy equivalence since we required that $\eta$ restricted to $(\R^k)^+$ was the identity. We therefore prove the following lemma.

\begin{lemma} \label{lem:Hatcher:8}
  The connectivity of the composition $c : \nH_k \xrightarrow{\th_k} \nHF_k \to \nT_k$ is at least $(\tfrac{k}{3}-5)$-connected.
\end{lemma}

\begin{proof}
  Let $\H \subset \nH_k \times [1,\infty)$ be the subspace of pairs $(f,b)$ where $b \geq c_f$ and $f=Q_k$ outside $D_b^{2k}$. Similarly, we define $\T \subset \nT_k \times [1,\infty)$ by $f=Q_{2k}^0$ outside $D_b^{2k}$. Using Lemma~\ref{lem:Gauss:1} it follows that the maps $\H \to \nH_k$ and $\T \to \nT_k$ are homotopy equivalences.

  For any $(f,b)\in \H$ we have that $f-Q_k + (b \se \oQ_k)$ equals $f$ on $D_b^{2k}$ and $b \se \oQ_k$ outside $D_b^{2k}$. It follows that
  \begin{align} \label{eq:Hatcher:9}
    c_{f-Q_k + (b \se \oQ_k)} = \max(c_f,c_{b \se \oQ_k}) \geq \max(c_f,64b) = 64b
  \end{align}
  since there is a point where $|\oQ_k-Q_k|=64$ and $b\geq c_f$. Using the bound in Equation~(\ref{eq:Tubes:2}) we also see that
  \begin{align*}
    c_{f-Q_k + (b \se \oQ_k)} \leq c_{b \se \oQ_k} \leq  b^2c_0
  \end{align*}
  where $c_0 = c_{\oQ_k}$.

  It follows that, up to homotopy we may lift $c$ to the map $C: \H \to \T$ given by
  \begin{align*}
    C(f,b) = (B_1(f-Q_k + (b \se \oQ_k)), \sqrt2 b^2c_0) 
  \end{align*}
  where $B_1$ and the factor of $\sqrt2$ come from Lemma~\ref{lem:Gauss:1}. Note that the lower bound in Equation~(\ref{eq:Hatcher:9}) shows that the bumping off happens outside of $D_{64b}^{2k}$ where the function equals $b \se \oQ_k$. As $\oQ_k$ is negative outside this ball, this region is far from the level set $\{f-Q_k + (b \se \oQ_k)=1\}$, which is helpful in the following explicit construction.
  
  Pick a diffeomorphism $\varphi : \R \times \R_+ \cong (-1,1) \times (0,1)$ which satisfies
  \begin{itemize}
  \item $\varphi(\{Q_1=1\} \cap \{y > 0\}) = (-1,1) \times \{\tfrac12\}$.
  \item $\varphi(-x,y)=(-\varphi_1(x,y), \varphi_2(x,y))$.  
  \end{itemize}
  Using this we define the diffeomorphism $\Phi : \R^k\times (\R^k \setminus \{0\}) \to (\inte D^k) \times S^{k-1} \times (0,1)$ given by
  \begin{align*}
    \Phi(x,y) =
    \begin{cases}
      \pare*{\tfrac{x}{\norm{x}}\varphi_1(\norm{x},\norm{y}),\tfrac{y}{\norm{y}},\varphi_2(\norm{x},\norm{y})} & x\neq 0 \\
      \pare*{0,\tfrac{y}{\norm{y}},\varphi_2(0,\norm{y})} & x= 0 \\
    \end{cases}
  \end{align*}
  This maps $\{Q_k=1\}$ to $(\inte D^k) \times S^{k-1} \times \{\tfrac12\}$. We thus use this to define $\iota_\H : \H \to  H_{\textrm{sm}}(D^k \times S^{k-1})$ by
  \begin{align*}
    \iota_\H(f,b) = \Phi(\{f=1\}) \cup ((\partial D^k) \times S^{k-1} \times \{\tfrac12\}).
  \end{align*}
  The image of $\iota_\H$ is the subspace $H_{\textrm{sm}}'(D^k \times S^{k-1})$, and over this subspace it is a fibration with contractible fibers. It follows by Lemma~\ref{lem:Hatcher:9} that $\iota_\H$ is a homotopy equivalence.

  To construct a similar map for $\T$ we consider the smooth embedding $\tau : [-1,1] \to S^1$ with image the lower half of $S^1$ given by
  \begin{align*}
    \tau(t) = (\cos(\tfrac12 \pi(t+3)),\sin(\tfrac12 \pi(t+3)))
  \end{align*}
  Notice that $\tau_1(-t)=-\tau_1(t)$. We use this to define the smooth rotationally symmetric embedding $\psi : D^k \times [0,1) \to D^{k+1}$ by
  \begin{align*}
    \psi(x,t) =
    \begin{cases}
      (1-t) \pare*{ \frac{x}{\norm{x}}\tau_1(\norm{x}), \tau_2(\norm{x}) } & x\neq 0 \\
      (0,t-1) & x=0
    \end{cases}
  \end{align*}
  whose image is the lower half-disk with respect to the last coordinate. We further use this to define $\Psi : D^k \times S^{k-1} \times (0,1) \to D^{k+1} \times S^{k-1}$ by
  \begin{align*}
    \Psi(x,y,t) = (\psi(x,t),y)
  \end{align*}
  which we in turn use to define the map $\iota_\T : \T \to H_{\partial}(D^{k+1} \times S^{k-1})$ given by
  \begin{align*}
    \iota_\T(f,b) = \Psi(\Phi(\{f=1\})). 
  \end{align*}
  The map is well-defined for $k\geq 3$. Indeed, first we note that the assumption that $(\R^k)^+ \subset \{f\leq 1\}^+$ is a homotopy equivalence implies that the inclusion
  \begin{align*}
    \Psi(\Phi(\{f \geq 1\})) \subset D^{k+1} \times S^{k-1} \simeq S^{k-1}
  \end{align*}
  is a homotopy equivalence. This, in turn, implies that the cobordism between the boundary and the image $\iota_\T(f,b)$ is an $h$-cobordism.

  The map $\iota_\T$ is also a homotopy equivalence. Indeed, the embedding $\Psi$ into the interior of $D^{k+1}\times S^{k-1}$ is isotopic through embeddings to a diffeomorphism. This implies that after modifying $\iota_\T$ by such an isotopy it is a surjective fibration onto $H_{\partial}(D^{k+1} \times S^{k-1})$ and, as for $\iota_\H$, the fibers are contractible.
  
  We now consider the diagram
  \begin{align*}
    \xymatrix{
      \H \ar[rr]^{C}\ar[d]^{\iota_\H}_\simeq && \T \ar[d]^{\iota_\T}_\simeq \\
      H_{\textrm{sm}}(D^k \times S^{k-1}) \ar[r] & H_{\textrm{sm}}(S^k \times S^{k-1}) \ar[r] & H_\partial(D^{k+1} \times S^{k-1})
    }
  \end{align*}
  Here the lower left map is induced by the inclusion $\psi_{\mid D^k \times \{0\}} \times \id_{S^{k-1}}$. The lower right map is induced by the collar given by $(x,y,t) \mapsto ((1-t)x,y)$. The composition of these lower maps is $(\tfrac{k}{3}-5)$-connected by Lemma~\ref{lem:GenFun:2} and Lemma~\ref{lem:GenFun:4}.

  We finish the proof by proving that the diagram commutes up to homotopy. In Figure~\ref{Fig:waysaround} we illustrated the image of the maps obtained by going around the diagram in the case $k=1$, restricted to the upper half-plane $\R \times \R_+$.
  \begin{figure}[ht]
    \begin{tikzpicture}
      \draw[dotted] (-1,0) -- (1,0);
      \draw[green,thick] (-30:0.5) arc (-30:210:0.5);
      \draw[red] (-30:0.5) to[out=60,in=0] (0.3,-0.1) to[out=180,in=0] (0,-0.13) to[out=180,in=0] (-0.3,-0.1) to[out=180,in=120] (210:0.5);
      \fill[yellow] (0.8,-0.6) to[out=135,in=-20] (-30:0.5) to[out=160,in=20] (210:0.5) to[out=200,in=45] (-0.8,-0.6) arc (-143:-37:1);
      \draw (0,0) circle (1);
      \draw (0,-0.5) node {?};
    \end{tikzpicture}
    \caption{The two ways around the diagram} \label{Fig:waysaround}
  \end{figure}
  Here the yellow part is the image of $D_b^2$ and the codimension 1 manifold inside this area depends on the function. The part of the image manifold inside the yellow region does not depend on which way we go around the diagram. The green continuation is what we get going counterclockwise around the diagram while the red is the result of going clockwise around. The size of the yellow area and the placement of the red part depend on $b$. However, increasing $b$ only brings the red part close to the dotted line. We thus pick a family of isotopies from the red lines to the green line always staying on the side not meeting the yellow areas. We may choose this to be symmetric with respect to the sign action on the $x$-coordinate. The above maps used in the constructions are $O(k) \times O(k)$ equivariant (acting on $(x,y)$) in such a way that we can take these chosen isotopies in the case $k=1$ and $y \geq 0$ and upgrade them to the general case of $k$.
\end{proof}

\begin{proof}[Proof of Proposition~\ref{prop:Hatcher:10}]
  The two lemmas above, together with the intervening discussion, prove this.
\end{proof}

\subsection{Comparison to Rognes' construction}
In this subsection, we prove that our rigid tube map
\begin{align*}
  \rt_\infty : BO \to \T_\infty
\end{align*}
is equivalent to that of Rognes and Waldhausen. We also lift the rigid tube map to define the Hatcher--Waldhausen map
\begin{align*}
  \hw : F/O \to \hF_\infty,
\end{align*}
from the diagram in Equation~(\ref{eq:Hatcher:1}). We then check that the lifted diagram commutes up to homotopy.

Waldhausen's rigid tube map was reformulated by Rognes in \cite{MR1282230}. We start by replacing our function space $\T_k^l$ with a $C^\infty$ space of submanifolds in $\R^{k+l}$. We then describe how this space relates to Waldhausen's and Rognes' models, and prove the homotopy coherence needed to identify our map as equivalent to their map.

Again, both Waldhausen and Rognes use simplicial sets to define their spaces, but as in Lemma~\ref{lem:GenFun:3}, these can easily be replaced by $C^\infty$ spaces. They also allow their manifolds to have singularities, but in a way that makes them smoothable by contractible choices (see the appendix of \cite{MR686115} for more details). We will mostly consider the equivalent subspaces where the manifolds are actually smooth. However, as their stabilizations use this feature we will need to consider this smoothing in a few cases.

Let $\cR_k^l$ denote the $C^\infty$ space of smooth compact submanifolds $M \subset \R^{k+l}$ isotopic to $T_k^l$. We let $M_+$ denote the bounded smooth codimension 0 manifold with boundary $M$, and we let $M_-$ denote the unbounded manifold with boundary $M$. We have a Serre fibration that is also a homotopy equivalence
\begin{align*}
  \T_k^l \to \cR_k^l \qquad \textrm{given by} \qquad f \mapsto \{f=1\}
\end{align*}
such that $M_- = \{f\leq 1\}$ and $M_+=\{f\geq 1\}$. Indeed, for each $M \in \cR_k^l$, the corresponding fiber is convex.

Waldhausen describes two stabilizations, denoted $\osig$ and $\usig$, corresponding to our $s_-$ and $s_+$ respectively. Rognes reinterprets these in his model, which is closer to our spaces $\cR_k^l$. In our case, following Rognes, the stabilization $\osig : \cR_k^l \to \cR_{k+1}^l$ is defined by crossing $M_+$ with a narrow interval $[-\epsilon,\epsilon]$ and taking a smoothing of its boundary; this new codimension $0$ manifold in $\R^{k+l+1}$ with boundary is $\osig(M)_+$. The map $\usig : \cR_k^l \to \cR_k^{l+1}$ is similarly given by crossing $M_-$ with $[-\epsilon,\epsilon]$ but also adding a neighborhood of $\infty$ given by $\norm{z}\geq R$ for $R$ large enough (depending on $M$) and then taking a smoothing of the boundary to get $\usig(M)_-$.

\begin{lemma}
  There are choices of Waldhausen's stabilization maps $\usig$ and $\osig$, and of sections $\cR_k^l \to \T_k^l$, such that the sections commute with stabilizations.
\end{lemma}

\begin{proof}
  We denote the image of the sections $g_M\in \T_k^l$ for $M\in \cR_k^l$ and we shorten $c_M=c_{g_M}$. Using such sections we may define maps on $\cR_k^l$ by
  \begin{align*}
    \sigma_-(M) = \{- x_{k+1}^2 + g_M =1\} \quad \textrm{and} \quad \sigma_+(M) = \{g_M + (c_M\se\oq)(y_{l+1})=1\}.
  \end{align*}
  We first claim that we can make the choices of all these $g_M$ so that $\sigma_- \circ \sigma_+ = \sigma_+ \circ \sigma_-$.
  
  Observe that for any choice of $g_M$ on all of $\cR_k^l$ the maps $\sigma_\pm$ are cofibrations. Indeed, we recover $M$ by intersecting the new manifold $\sigma_{\pm}(M)$ with $\R^{k+l}$, so it is injective. An open neighborhood of the image is still transverse to $\R^{k+l}$ so mapping to the intersection with $\R^{k+l}$ and stabilizing again provides a retraction of such a neighborhood onto the image.

  We first define $g_M$ on $\cR^l_0$ inductively in $l$. Indeed, we do this by extending the definition $g_{\sigma_+(M)}=g_M + (c_M\se\oq)(y_{l+1})$ inductively in $l$. Note that $c_{\sigma_+(M)} > c_M$. We then proceed to define $g_M$ on $\cR_1^l$ again inductively in $l$ by making sure that $g_{\sigma_-(M)} = - x_1^2 + g_M$ for $M \in \cR_0^{l}$ \emph{and} that $g_{\sigma_+(M)}=g_M + (c_M\se\oq)(y_{l+1})$ for $M\in \cR_1^{l-1}$. Note that these are not contradictory since $c_{\sigma_-(M)}=c_M$ and hence we may similarly continue inductively in $k$, by using induction in $l$ for each $k$ to define such extension.
  
  We then prove that there are choices of Waldhausen's stabilization maps $\osig$ and $\usig$ so that they are in fact equal to $\sigma_-$ and $\sigma_+$ respectively.
 
  To get $\usig=\sigma_+$ we consider that in the definition of $\usig(M)$ we smooth the boundary of
  \begin{align*}
    W = M_-\times [-\epsilon,\epsilon] \cup \{\norm{z} \geq R\}
  \end{align*}
  \begin{figure}[ht]
    \begin{center}
      \begin{tikzpicture}[scale=0.5]
        \fill[black!14!white] (-5,-3.7) -- (-5,3.7) -- (5,3.7) -- (5,-3.7) -- cycle;
        \fill[white] (0,0) circle (3.5cm);
        \draw[line width=0.6cm,black!14!white] (-5,0) -- (-1,0);
        \draw[line width=0.6cm,black!14!white] (-0.5,0) -- (0.5,0);
        \draw[line width=0.6cm,black!14!white] (1,0) -- (5,0);
        \draw[very thick] (-5,0) -- (-1,0);
        \draw[very thick] (-0.5,0) -- (0.5,0);
        \draw[very thick] (1,0) -- (5,0);
        \def\mypath{ (-1,0) to[out=90,in=0] (-1.6,0.5) to[out=180,in=270] (-2.3,1.4) to[out=90,in=90] (2.3,1.4) to[out=270,in=0] (1.6,0.5) to[out=180,in=90] (1,0) };
        \draw[green, very thick] \mypath;
        \begin{scope}[rotate=180]
          \draw[green, thick] \mypath;         
        \end{scope}
        \draw[green, thick] (-0.5,0) to[out=90,in=90] (0.5,0) to[out=270,in=270] cycle;
        \draw[red, very thick] (-5,1.4) -- (5,1.4);
        \draw[red, very thick] (-5,-1.4) -- (5,-1.4);
        \draw[very thick] (6,2.5) -- (6.8,2.5) node[right] {$M_-$};
        \draw[green, very thick] (6,1.2) -- (6.8,1.2);
        \draw (6.8,1.2) node[right] {$\sigma_+(M)=\{g^+=1\}$};
        \draw[red, very thick] (6,-0.1) -- (6.8,-0.1);
        \draw (6.8,-0.1) node[right] {$|y_{l+1}|=6c_M$};
        \fill[black!14!white] (6,-1.2) -- (6.8,-1.2) -- (6.8,-1.9) -- (6,-1.9) -- cycle;
        \draw (6.8,-1.7) node[right] {$W$};
      \end{tikzpicture}
      \caption{$W$ and level set.} \label{Fig:Wlevel}
    \end{center}
  \end{figure}
  for some $\epsilon>0$ and some large $R \gg \epsilon$. To smooth the compact boundary with corners of a smooth manifold $W\subset \R^{k+l+1}$, we first make the contractible choice of a smooth vector field $Y$ transverse to $\partial W$. If we let $U$ denote the open neighborhood defined by applying the flow of $Y$ for all time to the topological manifold $\partial W$, we get an identification of this manifold with the smooth leaves $U/Y$. Hence $\partial W$ gets a smooth structure. We then make the contractible choice of a smooth section $\partial W \to U$ and its image is then the smoothing of $\partial W$.

  Let $g^+=g_M+(c_M\se\oq)(\norm{y_{l+1}})$. For $R$ very large and $\epsilon < 1$ the gradient $\nabla g^+$ is transverse to $\partial W$ and points out of $W$ (into the bounded region $W^c$, which is white in Figure~\ref{Fig:Wlevel}). Let $U$ denote the image of the flow of $\nabla g^+$ for all time on $\partial W$. We claim that for possibly larger $R$ and smaller $\epsilon$ there is a unique section $s:\partial W \to U$ such that the image is exactly $\sigma_+(M)$, making $\sigma_+(M)$ this particular smoothing of $\partial W$.

  To prove this claim we note that for large enough $R$ we have that $g^+(z)<0$ for $\norm{z}\geq R$, and it follows that at all points in $\partial W$ we have $g^+(z)<1+\epsilon^2$. By the lemma above we have that all critical points of $g^+$ with value below 1 lie in the interior of $W$. We may even assume that $\epsilon$ is so small that $[1,1+\epsilon^2]$ is regular for $g_M$ and $g^+$. It follows that for each $z\in \partial W$ there is a unique time for which the gradient flow takes $z$ to a point where $g^+=1$. Indeed, for $g^+(z) \in [1,1+\epsilon^2]$ we can flow backwards with no critical points in the way, and for $g^+(z)<1$ the positive gradient flow takes us into $W^c$ where we also do not meet any critical points until we reach $\{g^+=1\}$. Similarly, to prove that the corresponding section surjects onto $\sigma_+(M)$ we must argue that any point in $\{g^+=1\}$ can be taken to a point in $\partial W$ by the flow. We also divide this argument into two cases.

  Case 1: $z\in W$ implies that $z \in M_- \times [-\epsilon,\epsilon]$. Indeed, if $\norm{z} \geq R$ we would have $g^+(z)<1$. If $y_{l+1}=0$ it means that $g_M=g^+=1$ and thus $z\in \partial W$. If $y_{l+1}\neq 0$ the gradient flow will increase $\absv{y_{l+1}}$ and it will eventually reach the boundary of $M_-\times [-\epsilon,\epsilon]$.

  Case 2: $z\notin W$ means that we can use the negative gradient flow to get to $\partial W$. Indeed, this again follows from the fact that all critical points with value below 1 are inside $W$.

  The argument for $\osig=\sigma_-$ using the gradient of $g^-=-x_{k+1}^2+g_M$ and a unique section to smooth the boundary of $W=M_+\times [-\epsilon,\epsilon]$ is similar yet easier.
\end{proof}

\begin{corollary}
  The colimit $\T_\infty = \colim_{k,l\to\infty} \T_k^l$ is equivalent to Waldhausen's tube space, and the map $\rt_\infty : BO \to \T_\infty$ is equivalent to Waldhausen's and Rognes' rigid tube map.
\end{corollary}

\begin{proof}
  We have already related the spaces and stabilization maps, so we proceed to identify the rigid tube map itself. Rognes' rigid tube map is defined to be a smoothing of the boundary of $V \times D_\epsilon V^\perp \cup \{\norm{z} \geq R\}$ for some small $\epsilon>0$ and large $R>0$. The following more general construction takes us closer to our definition of the map. Pick for each $V \in \Gr_k(\R^{k+l})$ a tubular neighborhood embedding $\tau : V\times DV^\perp \to \R^{k+l}$ which is tangent to the foliation by spheres $S_r^{k+l}$ for $r\geq R>0$. Then for small $\epsilon>0$ the boundary of $\tau(V\times D_\epsilon V^\perp)$ is transverse to $\{\norm{z} = R\}$ and can be smoothed.

  We precisely picked such tubes $\tau=\tau_{V,i_V,1}$ using Lemma~\ref{lemcor:Parhandles:1} when defining our rigid tube map as $\MP(V,i_V,1,1)$. However, in the map to $\cR_k^l$, where Rognes' tubes live, this gives $\{\MP(V,i_V,1,1)=1\}$ and not the image of a small $\epsilon$-tube boundary. We relate these two exactly as in the proof above. Indeed, in Lemma~\ref{lem:FmodO:7} we saw that $D\tau_*(\nabla Q_V)$ is a strict pseudo-gradient for $\MP(V,i_V,1,1)$ and this is outward-pointing on the boundary of $\{\norm{z} \geq 1\} \cup \tau(V\times D_\epsilon V^\perp)$. It is thus a contractible choice to extend this to a strict pseudo-gradient $X$ which is also inward-pointing on $S_R^{k+l}$. The identification then works exactly as in the proof above using the flow of $X$. 
  
  Rognes defines coherence with stabilization by simply noting that such $\epsilon$-neighborhoods are contractible choices, and the stabilizations are obtained by adding a new direction to such neighborhoods. This statement is equivalent to our coherence homotopies constructed in Lemma~\ref{lem:Hatcher:5}. Indeed, in the proof of that lemma we used that the unstable disk map was given throughout by $i_{\R \oplus V}$, and the mountain pass construction formalizes the contractible choice of such neighborhoods in our slightly more general setup of Rognes' construction. 
\end{proof}

As the inclusion of $V^+ \subset \{\rt_k^l (V)\leq 1\}^+$ is a homotopy equivalence, we have an essentially canonical homotopy between the composition $c_k^l \circ \rt_k^l$ and the delooping of the $J$-homomorphism $BJ_k^l : \Gr_k(\R^{k+l}) \to BF(k)$. This implies that the Hatcher--Waldhausen map $\hw_k^l : (F/O)_k^l \to \hF_k^l$ at level $(k,l)$ can be realized explicitly in our models by
\begin{align*}
  \hw_k^l(V,\theta) = (\rt_k^l(V),\theta_V)
\end{align*}
where $\theta_V : \{\rt_k^l(V)\leq 1\}^+ \to S^k$ is some choice extending $\theta$ to the sublevel set. As these inclusions are homotopy equivalences throughout the homotopies constructed in Lemma~\ref{lem:Hatcher:5} we get a contractible choice lift of the homotopies $\beta_t^\pm$ to similar homotopies for $\hw_k^l$. The colimit map $\hw_\infty$ thus matches Waldhausen's definition of the Hatcher--Waldhausen map.

\begin{lemma} \label{lem:Hatcher:1oldcorr}
  The diagram in Equation~(\ref{eq:Hatcher:1b}) commutes up to homotopy.
\end{lemma}

\begin{proof}
  We have already produced most of the pieces of the needed homotopy in the proof of Lemma~\ref{lem:Hatcher:11}. Indeed, in the language of that proof, all we need is to argue that the spherical trivializations to $S^k$ given over $\UN_k\times (\partial I^2)$ extend to $\UN_k \times I^2$. The argument in that proof used that the maps
  \begin{align*}
    \R \times \un(X)_{\min(2u,1)} : \R \times N_k^l(f) \to \R^{k+l+1}
  \end{align*}
  included into the relevant sublevel sets are unstable disk maps for the relevant functions for each $(t,u) \in I^2$. By Lemma~\ref{lem:FmodO:3} this means that their one-point compactifications are homotopy equivalences. On these subsets we now consider the spherical trivializations given by the one-point compactifications of
  \begin{align*}
    \R \times \im \un(X)_s \xrightarrow{\id_\R \times \un(X)_s^{-1}} \R \times N_k^l(f) \xrightarrow{\pi_{k+1}^+ \circ (\id_\R \times \un(X))} S^{k+1},
  \end{align*}
  where $s=\min(2u,1)$. The spherical trivializations already given over $\UN_k \times (\partial I^2)$ are by definition extensions of these to the relevant sublevel sets. Since the space of such extensions is contractible we may extend over $I^2$.
\end{proof}

\begin{proof}[Proof of Theorem~\ref{thm:3}]
  In Proposition~\ref{prop:oldcor:Tubes:1} we established a fibration sequence involving $\M_\infty$ as the fiber. In Proposition~\ref{prop:Hatcher:1} we saw that the inclusion $\U_\infty \subset \H_\infty$ is equivalent to the map $w_\infty : \tU_\infty \to \H_\infty$. The diagram in Equation~(\ref{eq:Hatcher:1b}), which was constructed in this section and which commutes up to homotopy, then relates this to the Hatcher--Waldhausen map.
\end{proof}


\appendix

\section{Proof of standard results} \label{sec:proof-stand-results}

In this appendix we include proofs of standard lemmas for which we could not find suitable references. Such references probably exist, but we include the proofs here for completeness.

\begin{proof}[Proof of Lemma~\ref{lem:FmodO:4}]
  Picking the pseudo-gradient is a contractible choice, so it suffices to show that the inclusion of the subspace $(\U^q)_k^l \subset \U_k^l$ defined by those $f$ which are equal to a quadratic form with eigenvalues $\pm1$ close to $0$ is a homotopy equivalence.
    
  For each $z\in \R^{k+l}$ we may consider the path moving the point $z$ in a straight line to $0$. Using isotopy extension we may extend the family of these paths to a family of compactly supported ambient isotopies $\psi_{z}^t : \R^{k+l} \to \R^{k+l}$ such that $\psi_0^t=\id$. Letting $z_f$ denote the unique critical point with critical value $0$ for $f\in \U_k^l$ we may consider the homotopy given by $(f,t) \mapsto f\circ \psi_{z_f}^t$. This defines a deformation retraction of $\U_k^l$ onto the subspace where the unique critical point is $0$.

  Then, similarly using a local isotopy fixing $0$ that stretches the eigenspaces of the Hessian $H_f$ one may adjust the function and deformation retract onto the subspace where the Hessian has eigenvalues $\pm1$.

  Now let $f$ be in this subspace. Denote the second order approximation of $f$ at $0$ by $p_2f$ (a quadratic form with eigenvalues $\pm1$). Let $X=\nabla f$ which we know to be transverse to $0$ at $0$. Let $\varphi : \R^{k+l} \to [0,1]$ be a bump function with compact support and equal to 1 close to 0. For each $a>0$ let $\varphi_a(z)=\varphi(az)$ and consider for large $a$ the function
  \begin{align*}
    f' = \varphi_a \cdot (p_2f) + (1-\varphi_a)\cdot f.
  \end{align*}
  Applying $X$ locally yields
  \begin{align*}
    X(f') = X(f) + X(\varphi_a)(p_2f-f) + \varphi_aX(p_2f-f) \geq \norm{X}^2 - ac_1c_2\norm{X}^4-c_3\norm{X}^3
  \end{align*}
  where $\norm{\nabla \varphi} \leq c_1$ and locally we have $\norm{p_2f-f} \leq c_2\norm{X}^3$ and $\absv{X(p_2f-f)} \leq c_3\norm{X}^3$. The function $f'$ equals $f$ outside a set $\{a\norm{X} \leq c_4\}$ for some $c_4>0$. It follows that for $a$ sufficiently large, $X$ is a strict pseudo-gradient for $f'$.
  
  The bounds on $a$ only depend on the first few derivatives of $f$ in a neighborhood of $0$, so $a$ can be chosen continuously depending on $f$ in the $C^\infty$ topology. Interpolating from $f$ to $f'$ thus defines a homotopy from the identity to a map landing in $(\U^q)_k^l$. As this homotopy preserves $(\U^q)_k^l$ as a set it follows that the inclusion is a homotopy equivalence.
\end{proof}

\begin{proof}[Proof of Lemma~\ref{lem:GenFun:2}]
  Consider the homotopy-commutative diagram
  \begin{align*}
    \xymatrix{
      \H_{\textrm{sm}}(M) \ar[d] \ar[r] & \H_{\textrm{sm}}(N) \ar[d] \\
      \H_{\textrm{sm}}(M\times I) \ar[r] & \H_{\textrm{sm}}(N\times I)
    }
  \end{align*}
  where the vertical arrows are stabilizations (it does not matter which of the two stabilizations is used). By Igusa's theorem (see \cite{MR972368}) these vertical arrows are at least as connected as the lemma requires. Taking the colimit over stabilizations, we get, by Waldhausen's theorem, a map that is $(n-2)$-connected. Indeed, the induced map on algebraic $K$-theory $K(\Omega M) \to K(\Omega N)$ is at least $(n-1)$-connected.
\end{proof}

\begin{proof}[Proof of Lemma~\ref{lem:GenFun:4}]
  Given $P \subset M$, the $h$-cobordism $W$ from $\partial M$ to $P$ need not be an $s$-cobordism. Indeed, it may have nontrivial simple homotopy type relative to $\partial M$. If it is not, we can add some 1 and 2 handles to $W$ (inside $M$) and still conclude that there is at least one collar $c: \partial M \times [0,1) \to M$ which contains all of $W$.

  We now claim that, fixing a $P \subset M$, the space of collars containing $P$ in its image is contractible.

  Proof of claim: Let $(c_k)_{k \in K}$ be a compact family of such collars. Using the existence construction above we can pick another such collar $c$ such that the image of $c$ is contained inside all the images $\im c_k$ for $k\in K$. Indeed, we may carry out the attaching in the construction above inside any small neighborhood of $P$. Now, pick $t\in(0,1)$ large enough for $P \subset c_{\mid \partial M \times [0,t)}$. Pick a family of embeddings $\varphi_{(k,u)} : \partial M \times [0,t] \to \partial M \times [0,1)$ for $u\in [0,1]$ which starts at $(m,v) \mapsto c_k^{-1}(c(m,v))$ and ends at the standard inclusion (this can be done because the space of collars is contractible). By isotopy extension we can extend this family of embeddings to a family of diffeomorphisms $\Phi_{(k,u)} : \partial M \times [0,1) \to \partial M\times [0,1)$. Then reparametrizing $c_k$ by this isotopy creates a homotopy of collars from the family $(c_k)$ to a family $(b_k)$ which agrees with $c$ on $\partial M \times [0,t]$. Note that, as these are reparameterizations, they contain $P$ throughout. Now the family of collars $u \in [t,1]$, given by
  \begin{align*}
    (k,m,v) \mapsto b_k(m,uv)
  \end{align*}
  shows that the family is null-homotopic, while still containing $P$. Thus the claim is proven.

  We thus define a larger space $S$ consisting of pairs $(P,c)$ where $P \in  H_\partial(M)$ and $c$ is a collar containing $P$. The forgetful map $S \to H_\partial(M)$ is by the above claim a homotopy equivalence. The inclusion $H_{\textrm{sm}}(\partial M) \to H_\partial(M)$ was defined to lie in one such given collar, so we have a canonical lift to $S$. Now as the space of collars is contractible we get a deformation retraction of $S$ onto the image of this inclusion.
\end{proof}

\section{Homotopy groups of $\M_\infty$} \label{sec:homot-groups-m_infty}

In \cite{MR1988283} and \cite{MR1923990} Rognes computed many of the homotopy groups of $\H_\infty$. In \cite{MR3921317} Blumberg and Mandel improved these calculations. Using their Table 1 together with the preprint \cite{K8is0}, which shows that $K_8(\Z)=0$, we can extract the groups listed in Table~\ref{tab:1}.
\begin{table}[h!]
  \begin{center}
    \begin{tabular}{|l|cccccccccc|}
      \hline
      $n$ &$\leq 1$&2&3&4&5&6&7&8&9&10\\
      \hline
      $\pi_n(F/O)$ &0& \makebox[0.1cm]{$\Zp{2}$} &0&$\Z$& 0 &\makebox[0.2cm]{$\Zp{2}$}& 0 &$\Z\oplus \Zp{2}$ & $(\Zp{2})^2 $ & $\Zp{6}$   \\
      $\pi_n(\H_\infty)$ &0&\makebox[0.2cm]{$\Zp{2}$} &0&$\Z$& 0 &\makebox[0.2cm]{$\Zp{2}$}& 0 &$\Z\oplus \Zp{2}$ & $(\Zp{2})^2 \oplus \Zp{8}$ & $\Zp{6}$   \\
      $\pi_{n-1}(\M_\infty)$ &0&0&0&\makebox[0.5cm]{$\Zp{m_1}$}&0&$0$&$0$&\makebox[0.5cm]{$\Zp{m_2}$}& $\Zp{8}$ or $\Zp{2}\oplus \Zp{4}$ &$\Zp{3}$ or 0 \\
      \hline
    \end{tabular}
  \end{center}
  \caption{Homotopy groups of $\H_\infty$, $F/O$ and $\M_\infty$.}\label{tab:1}
\end{table}
Indeed, the second column in their table is the sphere-spectrum factor in the identification $\Ss \vee \Wh^{\Diff}(*) \simeq K(\Ss)$, and the table consists of the remaining columns combined and shifted by 1 since $\H_\infty \simeq \Omega \Wh^{\Diff}(*)$.

These are very similar to the well-known homotopy groups of $F/O$ also listed in Table~\ref{tab:1}. Indeed, the only difference is that $\pi_9(F/O) \cong (\Z/2)^2$ is missing the $\Z/8$ summand. The homotopy groups of $\M_\infty$ listed above and in Table~\ref{tab:ind} now follow from the long exact sequence of homotopy groups associated to the fibration in Theorem~\ref{thm:3} and Theorem 7.5 in \cite{MR1923990}, which states that the Hatcher--Waldhausen map is a 2-primary equivalence in degrees less than $8$ and injective in degrees less than 13.

\bibliographystyle{plain}
\bibliography{/home/thomas/Dropbox/Mybib}

\end{document}